\documentclass[12pt]{amsart}
\usepackage{ amsmath, amsthm, amsfonts, amssymb, color}
 \usepackage{mathrsfs}
\usepackage{amsfonts, amsmath}
 \usepackage{amsmath,amstext,amsthm,amssymb,amsxtra}
 \usepackage{txfonts} 
 \usepackage[colorlinks, citecolor=blue,pagebackref,hypertexnames=false]{hyperref}
 \allowdisplaybreaks
 \usepackage{pgf,tikz}
 \usepackage{multirow}
 \usepackage{diagbox} 

 \usepackage{tikz}
 \usepackage{cite}

\usetikzlibrary{decorations.pathreplacing}

 \textwidth =166mm
 \textheight =232mm
\marginparsep=0cm
\oddsidemargin=2mm
\evensidemargin=2mm
\headheight=13pt
\headsep=0.8cm
\parskip=0pt
\hfuzz=6pt
\widowpenalty=10000
 \setlength{\topmargin}{-0.2cm}

\setcounter {tocdepth}{1}
\begin{document}

 \baselineskip 16.6pt
\hfuzz=6pt

\widowpenalty=10000

\newtheorem{cl}{Claim}
\newtheorem{theorem}{Theorem}[section]
\newtheorem{proposition}[theorem]{Proposition}
\newtheorem{coro}[theorem]{Corollary}
\newtheorem{lemma}[theorem]{Lemma}
\newtheorem{definition}[theorem]{Definition}
\newtheorem{assum}{Assumption}[section]
\newtheorem{example}[theorem]{Example}
\newtheorem{remark}[theorem]{Remark}
\renewcommand{\theequation}
{\thesection.\arabic{equation}}

\def\SL{\sqrt H}

\newcommand{\mar}[1]{{\marginpar{\sffamily{\scriptsize
        #1}}}}

\newcommand{\as}[1]{{\mar{AS:#1}}}

\newcommand\R{\mathbb{R}}
\newcommand\RR{\mathbb{R}}
\newcommand\CC{\mathbb{C}}
\newcommand{\Cn}{\mathbb{C}^n}
\newcommand\NN{\mathbb{N}}
\newcommand\ZZ{\mathbb{Z}}
\newcommand\HH{\mathbb{H}}
\newcommand\Z{\mathbb{Z}}
\def\RN {\mathbb{R}^n}
\renewcommand\Re{\operatorname{Re}}
\renewcommand\Im{\operatorname{Im}}

\newcommand{\mc}{\mathcal}
\newcommand\DD{\mathbb{D}}
\def\hs{\hspace{0.33cm}}
\newcommand{\la}{\alpha}
\def \l {\alpha}
\newcommand{\eps}{\tau}
\newcommand{\pl}{\partial}
\newcommand{\supp}{{\rm supp}{\hspace{.05cm}}}
\newcommand{\x}{\times}
\newcommand{\lag}{\langle}
\newcommand{\rag}{\rangle}

\newcommand\wrt{\,{\rm d}}

\newcommand{\norm}[2]{|#1|_{#2}}
\newcommand{\Norm}[2]{\|#1\|_{#2}}

\title[Singular values of Hankel operators]{IDA function and asymptotic behavior of singular values of Hankel operators on weighted Bergman spaces}

\author{Zhijie Fan}
\author{Xiaofeng Wang$^*$}\thanks{$*$ Corresponding Author}
\author{Zhicheng Zeng$^*$}

\address{Zhijie Fan, School of Mathematics and Information Science,
Guangzhou University, Guangzhou 510006, China}
\email{fanzhj3@mail2.sysu.edu.cn}

\address{Xiaofeng Wang, School of Mathematics and Information Science,
Guangzhou University, Guangzhou 510006, China}
\email{wxf@gzhu.edu.cn}

\address{Zhicheng Zeng, School of Mathematics and Information Science,
Guangzhou University, Guangzhou 510006, China}
\email{zhichengzeng@e.gzhu.edu.cn}

  \date{\today}

 \subjclass[2010]{47B35, 30H20, 47B10}
\keywords{Hankel operators, Singular values, Bergman spaces, Fock spaces, IDA spaces}


\begin{abstract}
In this paper, we use the non-increasing rearrangement of ${\rm IDA}$ function with respect to a suitable measure to characterize the asymptotic behavior of the singular values sequence $\{s_n(H_f)\}_n$ of Hankel operators $H_f$ acting on a large class of weighted Bergman spaces, including standard Bergman spaces on the unit disc, standard Fock spaces and weighted Fock spaces. As a corollary, we show that the simultaneous asymptotic behavior of $\{s_n(H_f)\}$ and $\{s_n(H_{\bar{f}})\}$ can be characterized in terms of the asymptotic behavior of non-increasing rearrangement of mean oscillation function. Moreover, in the context of weighted Fock spaces, we demonstrate the Berger-Coburn phenomenon concerning the membership of Hankel operators in the weak Schatten $p$-class.
\end{abstract}

\maketitle

\tableofcontents

\section{Introduction}
\subsection{Background}
We denote by $\text{Hol}(\Omega)$ the space of all holomorphic functions on a domain $\Omega$ of the complex plane $\mathbb{C}$.  Denote by $dA$ the Lebesgue measure and $dA_\omega:= \omega dA$, where $\omega$ is a suitable weight on $\Omega$. Then the weighted Bergman space $A^2 _\omega $ is given by
$$
	A^2_{\omega}:= \left\{ f \in \text{Hol} (\Omega):\ \ \| f \| _{A^2_{\omega}} := \left ( \displaystyle \int _\Omega |f(z)|^2dA_\omega(z) \right )^{1/2} <\infty\right\}.
	$$
In particular, if $\omega(z) =\frac{(\alpha+1)}{\pi}(1-|z|^2)^\alpha$ with $\alpha >-1$ and $\Omega=\mathbb{D}$ (unit disc), then $A^2 _\omega $ goes back to the standard Bergman space, which will be denoted by $A_\alpha^2$ for simplicity. If $\omega(z) =e^{-|z|^2}$ and $\Omega=\mathbb{C}$, then $A^2 _\omega $ goes back to the standard Fock space $F^2$.

Let $f\in\mathcal{S}$ (see \eqref{defS} for precise definition), then the Hankel operator with symbol $f$ is the operator $H_{f}: A_\omega^2\to L_\omega^2 \ominus A^2_\omega$ given by
\begin{equation*}
H_{f} g:= f g-P_\omega (f g),
\end{equation*}
where  $P_\omega$ is the orthogonal projection from $L^2_\omega:=L^2(\Omega, dA_\omega)$ onto $A^2_{\omega}$.

Hankel operators play a crucial role in various fields of mathematics, particularly in functional analysis, complex analysis, operator theory and control theory. In particular, they are among the most significant operators in the study of bounded and compact operators on Hilbert spaces and are deeply connected to many classical problems in analysis (see the book of V. Peller \cite{P2} for more backgrounds and applications about Hankel operators).

A fundamental problem in the study of Hankel operators is the characterization of their boundedness, compactness and Schatten-$p$ class membership (see e.g. \cite{MR2806550,MR850538,MR1113389,MR970119,MR2138695,MR4402674,MR4668087,MR3803293,MR3158507,MR1050105,MR1194989,MR3551773,MR3010276,MR1049650,MR1860488,MR1951248,ZWH,MR1013987,MR1087805,MR2311536,MR4552558,MR2104282} and the references therein). The complete characterizations of these three aspects of Hankel operators on the standard Bergman space were established by D.H. Luecking in 1992 \cite{MR1194989}. However, despite significant efforts and progress over the years (see e.g. \cite{MR2138695,MR3010276,MR3803293,MR2104282}), the analogous problems on Fock spaces remained open problems for over 20 years, until Z. Hu and J.A. Virtanen provided a complete answer in their recent remarkable works \cite{MR4402674,MR4668087}. In particular, under a priori assumption $f\in \mathcal{S}$, they introduced a new space ${\rm IDA}$ and demonstrated that
\begin{enumerate}
  \item[(A)]  $H_f$ is bounded from $F^2$ to $L^2(\mathbb{C},e^{-|z|^2}dA)$ if and only if $f\in BDA(\mathbb{C})$;
  \item[(B)]  $H_f$ is compact from $F^2$ to $L^2(\mathbb{C},e^{-|z|^2}dA)$ if and only if $f\in VDA(\mathbb{C})$;
  \item[(C)]  $H_f\in S^p(F^2\rightarrow L^2(\mathbb{C},e^{-|z|^2}dA))$ if and only if $f\in IDA^p(\mathbb{C})$, where $0<p<\infty$,
\end{enumerate}
where the definitions of $BDA$, $VDA$ and $IDA^p$ spaces will be recalled and extended to a more general context soon after.

Along the line carried out by D.H. Luecking \cite{MR1194989} and then by Z. Hu--J.A. Virtanen \cite{MR4402674,MR4668087}, another natural and fundamental question is how to characterize the asymptotic behavior of the singular values of $H_{f}$. However, unlike the extensively studied properties of boundedness, compactness, and Schatten class membership, this fundamental property has been largely unexplored. While some significant works (see \cite{MR2541276,MR3246989,MR2040916,MR4413302}) has been done on studying this fundamental property, a complete solution to this question remains unclear in both Bergman spaces and Fock spaces. Among these works, we would like to highlight a pioneering work of M. Bourass, O. El-Fallah, I. Marrhich and H. Naqos \cite{MR4413302}. To state their result, we first recall some notions in \cite{MR4413302}. Let $\mathcal{W}^*(\mathbb{D})$ denote a class of weights (see Section \ref{pre} for its precise definition) that includes many important examples, such as those associated with standard Bergman spaces, harmonically weighted Bergman spaces, and large Bergman spaces. Let $K$ be the reproducing kernel of $A^2_\omega$. Define  $K_z(w):=K(w,z)$ and let
\begin{align}\label{tauw}
\tau _\omega (z) := \frac{1}{\omega ^{1/2}(z)\|K_z \|_{A^2_{\omega}}}\quad \mbox{ and}\quad  d\lambda_\omega := \frac{dA}{\tau _\omega ^2 }.
\end{align}
It is also worth recalling from \cite[Section 2.1]{MR4413302} that $\tau _\omega ^2(z) \asymp \frac {1} {\Delta \varphi(z)}$ in several special contexts, where we write $\omega=e^{-\varphi}$. In particular, in the context of the standard Bergman space, $\Delta \varphi(z) \asymp \frac{1}{(1-|z|^2)^2}$, while in the context of the standard Fock space, $\Delta \varphi(z) \asymp 1$. Next, we recall the definitions of distribution function and non-increasing rearrangement function (see {\rm\cite[Section 1.4]{MR3243734}} for more details).
\begin{definition}
Let $f$ be a complex-valued function defined on $\Omega$. We denote by $f^*$ the non-increasing rearrangement of the function $f$ with respect to $d\lambda _\omega$.  Namely,
$$f^*(t):=\inf\{s>0:d_f(s)\leq t\},\quad t\geq 0,$$
where  $d_f(t):= \lambda _\omega( \{ z \in \Omega: \ |f(z)|> t   \} )$ is the distribution function.
\end{definition}
\noindent In the context of $\mathcal{W}^*(\mathbb{D})$-weighted Bergman space $A^2 _\omega $ and under a priori assumption that $f$ is an anti-analytic symbol (i.e. $f=\bar{\phi}$ for some $\phi\in \text{Hol}(\mathbb{D})$), M. Bourass et al. \cite{MR4413302} obtained the following results:

\begin{enumerate}
  \item[(i)] if $\rho $ is an increasing function  such that $\rho (x)/x^ \gamma$ is decreasing for some $\gamma\in (0,1)$, then
\begin{align}\label{firstequi}
s_n(H_f)\lesssim 1/\rho(n),\ {\rm for}\ {\rm all}\ n\in\mathbb{N} \Longleftrightarrow (\tau_\omega|\bar{f}'|)^*(n)\lesssim 1/\rho(n),\ {\rm for}\ {\rm all}\ n\in\mathbb{N};
\end{align}
  \item[(ii)] if $\rho $ is an increasing function  such that $\rho (x)/x^ \gamma$ is decreasing for some $\gamma>0$, then
\begin{align}\label{analooo}
 (\tau_\omega|\bar{f}'|)^*(n)\lesssim 1/\rho(n),\ {\rm for}\ {\rm all}\ n\in\mathbb{N} \Longrightarrow s_n(H_f)\lesssim 1/\rho(n),\ {\rm for}\ {\rm all}\ n\in\mathbb{N},
\end{align}
\end{enumerate}
where $s_n(H_f)$ denotes the $n$-th singular value of $H_f$ (see Section \ref{SV}). The converse direction of (ii) remains unclear but interested, particularly when the decay function $\rho$ is chosen as $(1+x)^{1/p}$ for $0<p\leq 1$. Their results \cite[Theorem 1.2 and 1.3]{MR4413302} indicate that in the setting of standard Bergman space $A_\alpha^2$,
\begin{enumerate}
  \item[(iii)]  if $0<p< 1$, then
    \begin{align*}
s_n(H_f)\lesssim \frac{1}{(1+n)^{1/p}},\ {\rm for}\ {\rm all}\ n\in\mathbb{N} \Longleftrightarrow f\ {\rm is}\ {\rm a}\ {\rm constant};
\end{align*}
  \item[(iv)]
   for the critical point $p=1$, we have
  \begin{align*}
s_n(H_f)\lesssim \frac{1}{1+n},\ {\rm for}\ {\rm all}\ n\in\mathbb{N} \Longleftrightarrow \bar{f}'\in H^1(\mathbb{D})\Longleftrightarrow  (\tau_\omega|\bar{f}'|)^*(n)\lesssim \frac{1}{1+n},\ {\rm for}\ {\rm all}\ n\in\mathbb{N},
\end{align*}
\end{enumerate}
where $H^1(\mathbb{D})$ denotes the Hardy space on $\mathbb{D}$, and the second equivalence, although not stated explicitly in their statement, is included in their proof.
\subsection{Our results}
Note that anti-analyticity is a strong condition imposed on the symbols, while $\mathcal{S}$ is a highly general class of symbols, designed to ensure that $H_f$ is densely defined on $A^2_\omega$. Therefore, the aim of this article is to establish a necessary and sufficient condition for the left-hand side of \eqref{firstequi}, which can be applied to a broader class of symbols $f\in\mathcal{S}$. For greater generality and broader applicability, we will work primarily in the framework of $W^*(\Omega)$-weighted Bergman space as in \cite{MR4413302}, such that our results encompass a wider range of important examples. Additionally, for completeness, we establish the boundedness, compactness, and Schatten-$p$ class characterizations $(1\leq p<\infty)$ in this framework as byproducts, which extends the results (A)--(C) to a broader setting.

Inspired by the works of D.H. Luecking \cite{MR1194989} and Z. Hu--J.A. Virtanen \cite{MR4402674,MR4668087}, our first step is to extend the notion of the ${\rm IDA}$ function $G_{\delta,\omega}(f)$ to the $W^*(\Omega)$-weighted setting, which can be regarded as a natural evolution from BMO (see \cite{MR4668087,MR131498,MR2934601}).

\begin{definition}
Let $B_\omega$ and $\delta_\omega$ be the constants chosen in Lemma \ref{lattice} later.
For any $\delta \in (0,B_\omega\delta _\omega)$ and $f\in L^2_{\mathrm{loc}}(\Omega)$ (the set of all locally square integrable functions on $\Omega$), we define the IDA function $G_{\delta,\omega}(f)$ as
\begin{equation}\label{G-q-r}
	G_{\delta,\omega}(f)(z):=\inf_{h\in {\rm Hol}(D(z,\delta \tau_\omega(z)))} \left(\frac{1}{A(D(z,\delta \tau_\omega(z)))}\int_{D(z,\delta \tau_\omega(z))} |f-h|^2 dA \right)^{\frac{1}{2}},\quad z\in \Omega,
\end{equation}
where $$D(z,\delta \tau_\omega(z)):= \{w\in\Omega : |z-w|<\delta \tau_\omega(z)\}.$$
\end{definition}

\begin{definition}
Fix $\delta\in (0,\delta _\omega)$. For $0<p\leq \infty$, the spaces $IDA_\omega^p(\Omega)$ (Integral Distance to Analytic Functions) and $IDA_\omega^{p,\infty}(\Omega)$ are defined as the set of $f\in L_{\rm loc}^2(\Omega)$ such that
$$\|f\|_{IDA_\omega^p(\Omega)}:=\|G_{\delta,\omega}(f)\|_{L^p(\Omega,d\lambda_\omega)}<+\infty,$$
$$\|f\|_{IDA_\omega^{p,\infty}(\Omega)}:=\|G_{\delta,\omega}(f)\|_{L^{p,\infty}(\Omega,d\lambda_\omega)}<+\infty,$$
where $L^{p,\infty}(\Omega,d\lambda_\omega)$ is the weak $L^p$ space consisting of all measurable function $f$ such that
$$\|f\|_{L^{p,\infty}(\Omega,d\lambda_\omega)}:=\sup\limits_{\lambda>0}\lambda d_f(\lambda)^{1/p}=\sup_{\lambda>0}\lambda^{1/p}f^*(\lambda),$$
where the second equality can be found in {\rm \cite[Proposition 1.4.9]{MR3243734}}.
For simplicity, we denote $BDA_\omega(\Omega)$ as $IDA_\omega^\infty(\Omega)$. The space $VDA_\omega(\Omega)$ consists of all $f\in  L^2_{\mathrm{loc}}(\Omega)$ such that $$\lim_{z\rightarrow\partial_\infty\Omega}G_{\delta,\omega}(f)(z)=0,$$
where $\partial_\infty\Omega$ is defined in \eqref{boundarydef}.
\end{definition}
In the setting of Fock space, the parameters $\delta_\omega$ and $\tau_\omega$ in the above definition can be usually replaced by $+\infty$ and $1$, respectively. In this setting, we use the notations $IDA^p(\mathbb{C})$ and $G_\delta(f)$ to replace $IDA_\omega^p(\mathbb{C},dA)$ and $G_{\delta,\omega}(f)$, respectively, for brevity.
The definitions of the above spaces are independent of $\delta$ (see \cite{MR4668087} for a related discussion).
The concept of $BDA$ ({\it Bounded Distance to Analytic Functions}) was introduced by D.H. Luecking \cite{MR1194989} in the context of Bergman space, while the spaces $IDA^p$ with $p<\infty$ were introduced recently in \cite{MR4668087} in the context of Fock space. In this paper, we extend the definition of $IDA^p$ to its weak-type version, and also extend these definitions to $\mathcal{W}^*(\Omega)$-weighted Bergman spaces. We will demonstrate the roles of these extensions in characterizing the behavior of singular values of Hankel operators on these weighted spaces. We now present our first main result.
\begin{theorem}\label{main0}
Let $\omega \in \mathcal{W}^\ast(\Omega)$, $\delta\in (0,\delta _\omega)$ and $f\in\mathcal{S}\cap VDA_\omega(\Omega)$. We have the following conclusions:
\begin{enumerate}
  \item\label{AAAAA} if $\rho $ is an increasing function  such that $\rho (x)/x^ \gamma$ is decreasing for some $\gamma\in (0,1)$. Then
 \begin{align}\label{ourform2}
 s_n(H_{f} ) \lesssim 1/ \rho (n),\ {\rm for}\ {\rm all}\ n\in\mathbb{N} \Longleftrightarrow  (G_{\delta,\omega}(f))^*(n)  \lesssim 1/ \rho (n),\  {\rm for}\ {\rm all}\ n\in\mathbb{N};
\end{align}
  \item\label{BBBBB} if $\rho $ is an increasing function  such that $\rho (x)/x^ \gamma$ is decreasing for some $\gamma>0$. Then
 \begin{align}\label{ourform33}
(G_{\delta,\omega}(f))^*(n)  \lesssim 1/ \rho (n),\  {\rm for}\ {\rm all}\ n\in\mathbb{N} \Longrightarrow  s_n(H_{f} ) \lesssim 1/ \rho (n),\ {\rm for}\ {\rm all}\ n\in\mathbb{N}.
\end{align}
\end{enumerate}

\end{theorem}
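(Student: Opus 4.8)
The plan is to prove both directions by comparing $H_f$ with a discretized "model operator" built from the lattice $\{a_j\}$ furnished by Lemma~\ref{lattice} and the local approximation data defining $G_{\delta,\omega}(f)$. The starting point for the hard direction \eqref{ourform2} (the implication $\Leftarrow$) and for the full content of \eqref{ourform33} is the standard two-sided estimate relating $\|H_f g\|$ to $\|g\cdot G_{\delta,\omega}(f)\|$ plus a tail term: concretely, after choosing for each lattice ball $D(a_j,\delta\tau_\omega(a_j))$ a near-optimal holomorphic approximant $h_j$, one builds the operator $T g = \sum_j (f-h_j)\chi_j g$ (with $\chi_j$ a partition of unity subordinate to the cover) and shows $s_n(H_f)\asymp s_n(T)$ up to the usual compactness/finite-rank perturbations, using $f\in VDA_\omega(\Omega)$ to guarantee $H_f$ is compact so that singular values are well defined and the perturbation arguments are legitimate. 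This reduces everything to a Toeplitz-type operator with "symbol" essentially $G_{\delta,\omega}(f)^2$ against $d\lambda_\omega$, and then to a diagonal-type operator indexed by the lattice with entries $G_{\delta,\omega}(f)(a_j)$; the sequence $\{G_{\delta,\omega}(f)(a_j)\}$ rearranged is comparable to $(G_{\delta,\omega}(f))^*$ evaluated on the $\lambda_\omega$-measure scale, because each ball has comparable $\lambda_\omega$-mass.

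For direction \eqref{ourform33} I would argue as follows: from $(G_{\delta,\omega}(f))^*(n)\lesssim 1/\rho(n)$ one extracts, for each $n$, a set $E_n$ with $\lambda_\omega(E_n)\lesssim n$ off which $G_{\delta,\omega}(f)\lesssim 1/\rho(n)$; covering $E_n$ by $O(n)$ lattice balls one splits $f = f_1 + f_2$ where $f_1$ is supported (in the approximation sense) near $E_n$ and $f_2$ has $G_{\delta,\omega}(f_2)\lesssim 1/\rho(n)$ everywhere. Then $H_{f_2}$ is bounded with norm $\lesssim 1/\rho(n)$ by the $BDA_\omega$-boundedness statement (the extension of (A)), while $H_{f_1}$ has rank $\lesssim n$ since it is, up to the model-operator reduction, supported on $O(n)$ lattice cells each contributing bounded rank. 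Hence $s_{Cn}(H_f)\lesssim 1/\rho(n)$, and the doubling-type hypothesis that $\rho(x)/x^\gamma$ is decreasing lets one absorb the constant $C$ into the estimate to conclude $s_n(H_f)\lesssim 1/\rho(n)$ for all $n$. This direction only needs $\gamma>0$ because that is exactly what is required to pass from control along the subsequence $\{Cn\}$ to control along all $n$.

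For the reverse implication in \eqref{ourform2}, I would run the argument the other way: given $s_n(H_f)\lesssim 1/\rho(n)$, I want a lower bound of $H_f$ (or of the model operator $T$) in terms of the discretized oscillation sequence, i.e. an estimate of the form $\sum_j |c_j|^2 G_{\delta,\omega}(f)(a_j)^2 \lesssim \|H_f(\sum_j c_j k_{a_j})\|^2 + \text{error}$, where $k_{a_j}$ are normalized reproducing kernels. Such a lower bound, combined with a Bergman-space analogue of the fact that normalized kernels at a lattice behave like a "Riesz-system modulo a small defect," gives $s_n(T)\gtrsim$ the $n$-th term of the rearranged sequence $\{G_{\delta,\omega}(f)(a_j)\}$, hence $(G_{\delta,\omega}(f))^*(n)\lesssim 1/\rho(n)$ after adjusting constants via the $\rho(x)/x^\gamma$ decreasing hypothesis with $\gamma<1$ (here the restriction $\gamma\in(0,1)$ is what makes the weak-$\ell^p$-type interpolation/summation estimates converge). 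The main obstacle, and the place where the bulk of the technical work lives, is precisely establishing this two-sided comparison between $s_n(H_f)$ and the rearranged lattice sequence uniformly in the weight class $\mathcal{W}^*(\Omega)$: one must control the off-diagonal interactions between distinct lattice balls (using the exponential decay of $|K(z,w)|$ relative to the Bergman metric built from $\tau_\omega$), show the holomorphic approximants $h_j$ can be chosen with compatible overlaps, and verify that replacing $G_{\delta,\omega}(f)$ by its values on the lattice costs only a comparable constant — all of which must be done with estimates depending only on the structural constants of $\mathcal{W}^*(\Omega)$ rather than on the specific weight.
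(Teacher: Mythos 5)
Your overall philosophy (discretize along the lattice, compare $H_f$ with the local oscillation data, use the regularity of $\rho$ to absorb constants) points in the right direction, but two of your central claims do not hold as stated, and the machinery that replaces them is missing. First, you assert $s_n(H_f)\asymp s_n(T)$ for the model operator $Tg=\sum_j(f-h_j)\chi_j g$ and then that $s_n(T)$ is comparable term by term to the decreasing rearrangement of $\{G_{\delta,\omega}(f)(a_j)\}_j$. Neither comparison is available at the level of individual singular values: the localization arguments you invoke (testing on normalized kernels, controlling off-diagonal interactions via kernel decay) only yield two-sided bounds for the trace functionals $\sum_n h(s_n(H_f))$ against $\int_\Omega h\bigl(BG_{\delta,\omega}(f)\bigr)\,d\lambda_\omega$ for convex $h$ --- a majorization statement, not a term-by-term one. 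Converting such inequalities into the pointwise decay $s_n(H_f)\lesssim 1/\rho(n)$ (and conversely) is exactly where the hypotheses on $\rho(x)/x^\gamma$ enter, via Lemma~\ref{Convex1} for part \eqref{AAAAA} and Lemma~\ref{finelemma} (which exploits the larger test class of increasing $h$ with $h(t^p)$ convex, and hence only needs $\gamma>0$) for part \eqref{BBBBB}. Your sketch gestures at ``adjusting constants,'' but this convexity step is the core of the proof and cannot be bypassed by the off-diagonal-decay argument alone: to compare $s_n(Y+Z)$ with $s_n(Y)$ term by term one needs $Z$ to be small in a rank-plus-norm sense, not merely small in operator norm or in a symmetric quasi-norm.

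Second, your argument for \eqref{ourform33} rests on the claim that the piece of $f$ living on $O(n)$ lattice cells induces a Hankel operator of rank $\lesssim n$ because ``each cell contributes bounded rank.'' This is false: already for a single cell $R_j$, the operator $g\mapsto(I-P_\omega)\bigl(\chi_{R_j}(f-h_j)g\bigr)$ has infinite rank unless $f=h_j$ a.e.\ on $R_j$, so the rank count never gets off the ground. The paper's route is different: decompose $f=f_1+f_2$ by Lemma~\ref{decomposelemma} (a partition-of-unity gluing of the local approximants $h_j$), bound $\|H_{f_1}g\|_{L^2_\omega}$ via the $\bar{\partial}$-H\"ormander estimate (Corollary~\ref{keycoro}) so that $H_{f_1}^*H_{f_1}\lesssim T_{\mu_{f_1}}$ with $d\mu_{f_1}=\tau_\omega^2|\bar{\partial}f_1|^2dA$, bound $H_{f_2}^*H_{f_2}\lesssim T_{\mu_{f_2}}$ trivially, and then combine the eigenvalue estimates for Toeplitz operators with Weyl's lemma, the inequality $s_{2k+j}(H_f)\leq s_{k+j}(H_{f_1})+s_{k+1}(H_{f_2})$, and Lemma~\ref{finelemma}. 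Your model operator only captures the analogue of $f_2$; the smooth part $f_1$, which must be handled through the $\bar{\partial}$-equation and whose contribution is controlled by $\tau_\omega|\bar{\partial}f_1|\lesssim G_{B_\omega^2\delta,\omega}(f)$, is absent from your reduction. Without these two ingredients --- the convex-function lemmas and the $\bar{\partial}$-based decomposition --- the proposal does not yield either implication.
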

\begin{remark}{\rm
The condition $f\in VDA_\omega(\Omega)$ is imposed to ensure that $H_{f}$ is a compact operator, so that $s_n(H_{f} )$ is well-defined. Indeed, we will show in Proposition \ref{compactness} (resp. Proposition \ref{boundedness}) that $H_f$ is compact (resp. bounded) from $A^2_\omega$ to $L_\omega^2$ if and only if $f\in VDA_\omega(\Omega)$ (resp. $f\in BDA_\omega(\Omega)$).
}
\end{remark}

\begin{remark}\label{equiremark}{\rm
If $f\in VDA_\omega(\Omega)\subset BDA_\omega(\Omega)$, then  $(G_{\delta,\omega}(f))^*(0)\asymp s_0(H_f)<+\infty$. Hence, the left-hand side in \eqref{ourform33} is equivalent to the fact that $$(G_{\delta,\omega}(f))^*(t)  \lesssim 1/ \rho (t),\ {\rm for}\ {\rm all}\ t\geq0.$$}
\end{remark}
\begin{remark}\label{introremark}{\rm
The equivalence \eqref{ourform2} and the implication \eqref{ourform33} are even new for Hankel operators on both standard Bergman spaces and standard Fock spaces. However, note that these two parts are formulated in terms of $G_{\delta,\omega}(f)$, whereas the counterparts in \eqref{firstequi} and \eqref{analooo} are formulated in terms of $\tau_\omega|\bar{f}'|$. Therefore, it is necessary to prove that these two results are compatible, which will be achieved in Proposition  \ref{compareProp2}, by showing that the left-hand sides in \eqref{analooo} and \eqref{ourform33} are equivalent when the symbol $f$ is an anti-analytic function. Moreover, combining Theorem \ref{main0} with Remark \ref{keyremark000}, we will see that when $f$ is not an anti-analytic function, $G_{\delta,\omega}(f)$ is more suitable to be a characterization function than $\tau_\omega|\bar{f}'|$.}
\end{remark}

\begin{definition}
For any $\delta\in (0,B_\omega\delta_\omega)$ and $f\in L^2_{\textup{loc}}(\Omega)$, the mean oscillation of $f$ at $z$ is defined by
		$$MO_{\delta,\omega} (f)(z):=\left(\frac{1}{A(D(z,\delta \tau_\omega(z)))}\int_{D(z,\delta \tau_\omega(z))}|f-\hat{f}_\delta(z)|^2\,dA\right)^{1/2},\quad z\in \Omega,$$
where $\hat{f}_\delta$ is the averaging function of $f$ over disk $D(z,\delta\tau_\omega(z))$ by
$$\hat{f}_\delta(z):=\frac{1}{A(D(z,\delta \tau_\omega(z)))}\int_{D(z,\delta\tau_\omega(z))}fdA,\quad z\in\Omega.$$
In the setting of Fock space, we use the notation $MO_{\delta} (f)$ to replace $MO_{\delta,\omega} (f)$ for brevity.
\end{definition}
The simultaneous boundedness, compactness and membership in the Schatten-$p$ class of the Hankel operators $H_f$ and $H_{\bar{f}}$ have been extensively studied in various settings (see e.g. \cite{MR1860488,MR1951248,MR1087805,MR2311536,MR3551773,MR3158507,ZWH,MR4668087,MR4402674}). However, the study of simultaneous asymptotic behavior of the sequences $\{s_n(H_f)\}$ and $\{s_n(H_{\bar{f}})\}$ is still in its infancy. As a corollary of Theorem \ref{main0}, we characterize their simultaneous asymptotic behavior in terms of the asymptotic behavior of $(MO_{\delta,\omega} (f))^*$. The result is stated precisely as follows.
\begin{theorem}\label{simultaneous}
Let $\omega \in \mathcal{W}^\ast(\Omega)$, $\delta\in (0,\delta _\omega)$ and $f\in\mathcal{S}$. We have the following conclusions:
\begin{enumerate}
  \item if $\rho $ is an increasing function  such that $\rho (x)/x^ \gamma$ is decreasing for some $\gamma \in (0,1)$, then
      \begin{align*}
s_n(H_{f})+s_n(H_{\bar{f}}) \lesssim 1/\rho(n), \ {\rm for}\ {\rm all}\ n\in\mathbb{N} \Longleftrightarrow
(MO_{\delta,\omega}(f))^*(n) \lesssim 1/\rho(n), \ {\rm for}\ {\rm all}\ n\in\mathbb{N};
\end{align*}
  \item if $\rho $ is an increasing function  such that $\rho (x)/x^ \gamma$ is decreasing for some $\gamma >0$, then
      \begin{align*}
(MO_{\delta,\omega}(f))^*(n) \lesssim 1/\rho(n), \ {\rm for}\ {\rm all}\ n\in\mathbb{N}\Longrightarrow  s_n(H_{f})+s_n(H_{\bar{f}}) \lesssim 1/\rho(n), \ {\rm for}\ {\rm all}\ n\in\mathbb{N}.
\end{align*}
\end{enumerate}
\end{theorem}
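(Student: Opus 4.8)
The plan is to deduce Theorem~\ref{simultaneous} from Theorem~\ref{main0} by means of the pointwise comparison
\begin{equation}\label{eq:MOGG}
MO_{\delta,\omega}(f)(z)\asymp G_{\delta,\omega}(f)(z)+G_{\delta,\omega}(\bar f)(z),\qquad z\in\Omega,
\end{equation}
with constants independent of $z$ and $f$, together with the trivial identity $MO_{\delta,\omega}(\bar f)(z)=MO_{\delta,\omega}(f)(z)$ (the averaging function of $\bar f$ over $D(z,\delta\tau_\omega(z))$ is the conjugate of that of $f$, and conjugation is an $L^2$-isometry). The estimate $\gtrsim$ in \eqref{eq:MOGG} is immediate since constants are holomorphic, so $G_{\delta,\omega}(f)\le MO_{\delta,\omega}(f)$ and $G_{\delta,\omega}(\bar f)\le MO_{\delta,\omega}(\bar f)=MO_{\delta,\omega}(f)$. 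The content is the estimate $\lesssim$, which says that a function lying close in mean square on a disc both to a holomorphic and to an anti-holomorphic function must lie close to a constant.

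To prove this, fix $z$ and put $D=D(z,\delta\tau_\omega(z))$, which for $\delta\in(0,\delta_\omega)$ is a genuine Euclidean disc whose closure lies in $\Omega$. Since $f\in L^2_{\mathrm{loc}}(\Omega)$, the infimum defining $G_{\delta,\omega}(f)(z)$ is attained by the Bergman projection $h:=P_Df$ of the disc $D$, and likewise $g:=P_D\bar f$ attains $G_{\delta,\omega}(\bar f)(z)$; note that $\int_D|f-\bar g|^2=\int_D|\bar f-g|^2=A(D)\,G_{\delta,\omega}(\bar f)(z)^2$. Expanding $h$ and $g$ in Taylor series about $z$ and integrating in polar coordinates centred at $z$, one checks that the constant function, the monomials $(w-z)^n$ with $n\ge1$, and the anti-monomials $\overline{(w-z)}^n$ with $n\ge1$ are pairwise orthogonal in $L^2(D,dA)$; hence the constant part of $h-\bar g$, the non-constant part of $h$, and the non-constant part of $\bar g$ are mutually orthogonal, and Pythagoras gives $\|h-h(z)\|_{L^2(D)}\le\|h-\bar g\|_{L^2(D)}$. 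Since $MO_{\delta,\omega}(f)(z)\le A(D)^{-1/2}\|f-h(z)\|_{L^2(D)}$ (as $h(z)$ is a constant), the triangle inequality combined with $\|h-h(z)\|_{L^2(D)}\le\|h-\bar g\|_{L^2(D)}$ and the norm identities above gives $MO_{\delta,\omega}(f)(z)\le 2G_{\delta,\omega}(f)(z)+G_{\delta,\omega}(\bar f)(z)$, which completes \eqref{eq:MOGG}.

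Granting \eqref{eq:MOGG}, the rest is bookkeeping with non-increasing rearrangements with respect to $d\lambda_\omega$. Writing $u=G_{\delta,\omega}(f)$, $v=G_{\delta,\omega}(\bar f)$ and $M=MO_{\delta,\omega}(f)$ we have $u,v\le M\le C(u+v)$ pointwise, so from $u^*\le M^*$, $v^*\le M^*$, the inequality $(u+v)^*(2n)\le u^*(n)+v^*(n)$, and the doubling $\rho(n)\le\rho(2n)\le 2^\gamma\rho(n)$ (a consequence of $\rho$ increasing with $\rho(x)/x^\gamma$ decreasing), one obtains for all $n$ that $M^*(n)\lesssim 1/\rho(n)$ if and only if both $u^*(n)\lesssim 1/\rho(n)$ and $v^*(n)\lesssim 1/\rho(n)$. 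This is then fed into Theorem~\ref{main0}, applied to $f$ and to $\bar f$ (also in $\mathcal{S}$). In the main regime $\rho(n)\to\infty$, both symbols lie in $VDA_\omega(\Omega)$: from the side $s_n(H_f)+s_n(H_{\bar f})\lesssim 1/\rho(n)$ because then $H_f$ and $H_{\bar f}$ are compact (Proposition~\ref{compactness}); and from the side $M^*(n)\lesssim 1/\rho(n)$ because then $M^*(n)\to0$, which with $u,v\le M$ and the local sub-mean-value behaviour of $G_{\delta,\omega}(\cdot)$ on the Bergman discs of Lemma~\ref{lattice} (each of $\lambda_\omega$-measure comparable to $1$) forces $f,\bar f\in VDA_\omega(\Omega)$; the case of bounded $\rho$ is trivial, everything reducing to boundedness of $H_f$, $H_{\bar f}$ and of $M$ via Proposition~\ref{boundedness} and \eqref{eq:MOGG}. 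Now part~(1) of Theorem~\ref{main0} (for $\gamma\in(0,1)$) gives $s_n(H_f)\lesssim1/\rho(n)\Longleftrightarrow u^*(n)\lesssim1/\rho(n)$ and the analogue for $\bar f$, which combined with the above equivalence yields part~(1) of Theorem~\ref{simultaneous}; part~(2) of Theorem~\ref{main0} (for $\gamma>0$) supplies the single implication required for part~(2).

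I expect the main obstacle to be the estimate $\lesssim$ in \eqref{eq:MOGG}. The orthogonality computation is clean only because $D(z,\delta\tau_\omega(z))$ is a round disc centred at $z$; if the $\mathcal{W}^\ast(\Omega)$-framework guarantees this merely up to bounded distortion near $\partial\Omega$, the exact orthogonality has to be replaced by a quantitative near-orthogonality estimate with constants depending only on the regularity of these sets, and that is where the real work lies. The remaining point, upgrading decay of $M^*$ (or of the singular values) to $VDA_\omega(\Omega)$-membership, is routine: it uses only the sub-mean-value property of $G_{\delta,\omega}(\cdot)$ and the covering and separation properties of the lattice from Lemma~\ref{lattice}, exactly as in the compactness characterization already employed in Theorem~\ref{main0}.
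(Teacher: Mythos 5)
Your proposal is correct and follows essentially the same route as the paper: the pointwise equivalence $MO_{\delta,\omega}(f)\asymp G_{\delta,\omega}(f)+G_{\delta,\omega}(\bar f)$ is exactly Lemma \ref{simmu}, the rearrangement bookkeeping is the cited subadditivity property of non-increasing rearrangements, and the conclusion is then read off from Theorem \ref{main0} applied to $f$ and $\bar f$. Your orthogonality computation for the nontrivial inequality $MO_{\delta,\omega}(f)\lesssim G_{\delta,\omega}(f)+G_{\delta,\omega}(\bar f)$ is a self-contained version of the argument the paper imports from Hu--Virtanen, and your explicit upgrade of the decay hypotheses to $VDA_\omega(\Omega)$-membership (needed to legitimately invoke Theorem \ref{main0}) is a detail the paper passes over in silence.
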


Our next main result explores the converse direction of Theorem \ref{main0} \eqref{BBBBB}. Motivated by the results in \cite[Theorem 1.2 and 1.3]{MR4413302}, we mainly focus on the case when the decay function $\rho$ is chosen as $(1+x)^{1/p}$ (equivalently, $S^{p,\infty}$ characterization of Hankel operator) for $0<p\leq 1$. See Section \ref{SV} for the definition of $S^{p,\infty}$, and \cite{FLLXarxiv,MR4654013,FSZ,MR4755443,LMSZ,MSX2018,MSX2019,RS} for discussions on the role of $S^{p,\infty}$ and related study in analysis. Denote by $L^2(\varphi):= L^2(\mathbb{C}, e^{-\varphi}dA)$, and let $F^2(\varphi)$ be the weighted Fock space (see the third example in Section \ref{pre} for its definition). For simplicity, we formulate our result only in the setting of $F^2(\varphi)$ (see also Proposition \ref{Berezin} for an equivalent characterization via Berezin transform), although our argument is expected to also be applicable in other settings, such as the standard Bergman space $A_\alpha^2$.
\begin{theorem}\label{fastdecay}
Let $f\in \mathcal S$ and suppose that $\varphi\in C^2(\mathbb{C})$ is real-valued with $\mathrm{i} \partial \bar{\partial} \varphi \simeq \omega_0$, where $\omega_0=i\partial\bar{\partial}|z|^2$ is the Euclidean-K\"{a}hler form on $\mathbb{C}$. Then for any $0<p<\infty$, we have
\begin{enumerate}
  \item $H_f\in S^{p,\infty}(F^2(\varphi) \to L^2(\varphi))$ if and only if $f\in IDA^{p,\infty}(\mathbb{C})$;
  \item Both $H_f$ and $H_{\bar{f}}$ are in $S^{p,\infty}(F^2(\varphi) \to L^2(\varphi))$ if and only if $MO_\delta(f)\in L^{p,\infty}(\mathbb{C})$ for some $\delta\in (0,\infty)$.
\end{enumerate}

\end{theorem}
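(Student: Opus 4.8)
\emph{Overall structure.} The plan is to strip off the parts that follow from Theorem~\ref{main0} and reduce everything to one new ingredient. Part (2) will follow from part (1) together with the pointwise comparison $MO_\delta(f)\asymp G_{\delta'}(f)+G_{\delta'}(\bar f)$ (after a harmless change of aperture): ``$\gtrsim$'' is immediate since constants are holomorphic and $|\bar f-\bar c|=|f-c|$, while ``$\lesssim$'' comes from approximating $f,\bar f$ on the disk by holomorphic $h_1,h_2$, noting $h_1-\bar h_2$ is then small and harmonic, and applying interior estimates. Since $L^{p,\infty}$-membership of such functions is aperture-independent and $\|g_1+g_2\|_{L^{p,\infty}}\lesssim\|g_1\|_{L^{p,\infty}}+\|g_2\|_{L^{p,\infty}}$, applying (1) to $f$ and $\bar f$ gives $H_f,H_{\bar f}\in S^{p,\infty}\iff G_\delta(f),G_\delta(\bar f)\in L^{p,\infty}\iff MO_\delta(f)\in L^{p,\infty}$, which is (2), and Proposition~\ref{Berezin} is then just the Berezin-transform restatement of the $IDA^{p,\infty}$ condition. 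For part (1), the implication $f\in IDA^{p,\infty}(\mathbb{C})\Rightarrow H_f\in S^{p,\infty}$ is exactly \eqref{ourform33} of Theorem~\ref{main0}\eqref{BBBBB} for $\rho(x)=(1+x)^{1/p}$ (a direct computation shows $\rho(x)/x^\gamma$ is decreasing for $\gamma=\max\{1,1/p\}>0$), once one checks $f\in IDA^{p,\infty}\subset VDA_\omega(\mathbb{C})$ --- which holds because $G_\delta(f)$ varies slowly at unit scale, so $\lambda_\omega(\{G_\delta(f)>\lambda\})<\infty$ forces $G_\delta(f)(z)\to0$ as $z\to\partial_\infty\mathbb{C}$. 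For the converse $H_f\in S^{p,\infty}\Rightarrow f\in IDA^{p,\infty}$ the case $p>1$ is already Theorem~\ref{main0}\eqref{AAAAA} (same $\rho$, $\gamma\in[1/p,1)$), so the new work is the range $0<p\le1$.

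\emph{The new case: a dyadic counting estimate.} Assume $H_f\in S^{p,\infty}$. Then $H_f$ is compact, so $f\in VDA_\omega(\mathbb{C})$ by (the weighted-Fock instance of) Proposition~\ref{compactness} and $G_\delta(f)$ is bounded. Fix a lattice $\Lambda=\{a_j\}\subset\mathbb{C}$ of separation a large universal constant $R_0$, and set $N_{H_f}(t)=\#\{n:s_n(H_f)>t\}$, $N_G(t)=\#\{a_j\in\Lambda:G_\delta(f)(a_j)>t\}$. By the slow variation of $G_\delta(f)$ and $d\lambda_\omega\asymp dA$ one has $\lambda_\omega(\{G_\delta(f)>t\})\asymp N_G(t)$ (up to constant shifts of $t$) and $\|H_f\|_{S^{p,\infty}}^p\asymp\sup_{t>0}t^pN_{H_f}(t)$, so it suffices to prove $\sup_{t>0}t^pN_G(t)\lesssim\|H_f\|_{S^{p,\infty}}^p$. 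Decompose dyadically $\Lambda_k=\{a_j\in\Lambda:2^k<G_\delta(f)(a_j)\le2^{k+1}\}$, so $N_G(t)\asymp\sum_{2^k\gtrsim t}\#\Lambda_k$. The heart of the proof is the bound
$$N_{H_f}(c\,2^k)\ \gtrsim\ \#\Lambda_k\qquad(k\in\mathbb{Z})$$
for a universal $c>0$; granting it, $t^pN_G(t)\lesssim t^p\sum_{2^k\gtrsim t}N_{H_f}(c2^k)\le t^p\|H_f\|_{S^{p,\infty}}^p\sum_{2^k\gtrsim t}(c2^k)^{-p}\lesssim\|H_f\|_{S^{p,\infty}}^p$, the geometric sum being dominated by its first term. (Organizing the count dyadically is what removes the logarithmic loss one would incur by working at a single level $t$.)

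\emph{Proof of the counting estimate via a Gram matrix.} Write $k_a=K_a/\|K_a\|_{F^2(\varphi)}$. Two uniform pointwise estimates are needed, both extracted from the arguments behind Proposition~\ref{boundedness} and the flatness $\mathrm{i}\partial\bar\partial\varphi\simeq\omega_0$ (so $\tau_\omega\asymp1$ and $|k_a(z)|^2e^{-\varphi(z)}\asymp e^{-c|z-a|^2}$): first,
$$\|H_fk_a\|_{L^2(\varphi)}^2\ \asymp\ G_\delta(f)(a)^2,$$
the lower bound by restricting $\inf_{h\in F^2(\varphi)}\int|fk_a-h|^2e^{-\varphi}$ to $D(a,\delta)$ (where $|k_a|^2e^{-\varphi}\asymp1$ and $h/k_a$ is holomorphic), the upper bound by $\|H_fk_a\|^2_{L^2(\varphi)}\lesssim\int G_\delta(f)(z)^2|k_a(z)|^2e^{-\varphi}\,dA\lesssim G_\delta(f)(a)^2$ via slow variation; and second, the localized version of the same upper bound,
$$|\langle H_fk_{a_j},H_fk_{a_k}\rangle_{L^2(\varphi)}|\ \lesssim\ G_\delta(f)(a_j)\,G_\delta(f)(a_k)\,e^{-c|a_j-a_k|^2}.$$
Fix $k$ and take an $R_0$-separated sub-lattice $\Lambda_k'\subseteq\Lambda_k$ with $\#\Lambda_k'\gtrsim\#\Lambda_k$; on $W_k:=\linspan\{k_{a_j}:a_j\in\Lambda_k'\}$, of dimension $m:=\#\Lambda_k'$, the Gram matrix of $\{H_fk_{a_j}\}_{a_j\in\Lambda_k'}$ has diagonal entries $\asymp G_\delta(f)(a_j)^2\in[c_02^{2k},C2^{2k}]$ (essentially constant on the dyadic level) and, by the second estimate and $R_0$-separation, off-diagonal row sums $\lesssim 2^{2k}e^{-cR_0^2}$; choosing $R_0$ large makes this Gram matrix $\ge\tfrac12c_02^{2k}I$, so $\|H_fg\|_{L^2(\varphi)}^2\gtrsim2^{2k}\|g\|_{F^2(\varphi)}^2$ for all $g\in W_k$ (using that $\{k_{a_j}\}_{a_j\in\Lambda_k'}$ is Bessel). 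The min--max principle then gives $s_{m-1}(H_f)\gtrsim2^k$, i.e. $N_{H_f}(c2^k)\ge m\gtrsim\#\Lambda_k$, as required.

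\emph{Main obstacle.} The decisive, genuinely new point is the almost-orthogonality estimate for $\{H_fk_{a_j}\}$ --- Gaussian decay in $|a_j-a_k|$ with constants uniform over $f\in BDA_\omega(\mathbb{C})$ --- together with $\|H_fk_a\|^2\asymp G_\delta(f)(a)^2$: this is exactly what allows the dyadic/Gram-matrix scheme to run with a $t$-independent separation $R_0$ (hence no logarithmic loss), and it is here that the flat-geometry hypothesis $\mathrm{i}\partial\bar\partial\varphi\simeq\omega_0$ is essential, entering through the weighted H\"ormander estimate and the Gaussian decay of the kernels; the analogous step is unavailable for general Bergman weights, which is why the converse of \eqref{ourform33} remained open there. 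An alternative route to the same conclusion is to dominate the Hankel form from below by the Toeplitz form $\langle T_{G_\delta(f)^2\,dA_\varphi}g,g\rangle_{F^2(\varphi)}$ modulo lower-order terms and invoke the Luecking-type weak-Schatten characterization of Toeplitz operators with the vanishing Fock--Carleson measure $G_\delta(f)^2\,dA_\varphi$; either way, this estimate is the step I expect to require the most care.
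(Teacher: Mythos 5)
The reduction of part (2) to part (1) via $MO_\delta(f)\asymp G_\delta(f)+G_\delta(\bar f)$, the use of Theorem \ref{main0}\eqref{BBBBB} for the sufficiency, and the observation that only the range $0<p\le 1$ of the necessity is new, all match the paper. The gap is in the two ``uniform pointwise estimates'' on which your Gram-matrix argument rests, and both are false for general $f\in\mathcal S$ (indeed already for bounded $f$). The upper bound $\|H_fk_a\|_{L^2(\varphi)}^2\lesssim G_\delta(f)(a)^2$ fails because $H_fk_a$ is not localized near $a$: the H\"ormander/Carleson-type bound only gives $\|H_fk_a\|^2\lesssim\int G_{\delta'}(f)(z)^2e^{-\theta|z-a|}\,dA(z)$, a weighted average of $G^2$ over all of $\mathbb{C}$, and $G_\delta(f)$ is slowly varying only in the additive sense $G_\delta(f)(z)\lesssim G_{C\delta}(f)(a)$ for $z$ \emph{near} $a$; far from $a$ there is no comparison. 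Concretely, in the standard Fock space take $f=\bar z\,\chi_{D(10,1)}$: then $G_1(f)(0)=0$ but $H_fk_0=f-P(f)\neq0$. The same example (with the bump placed between $a_j$ and $a_k$) kills the claimed almost-orthogonality $|\langle H_fk_{a_j},H_fk_{a_k}\rangle|\lesssim G_\delta(f)(a_j)G_\delta(f)(a_k)e^{-c|a_j-a_k|^2}$, since the right-hand side vanishes while the left-hand side generically does not. Without these, the diagonal dominance of your Gram matrix cannot be established, and the counting estimate $N_{H_f}(c2^k)\gtrsim\#\Lambda_k$ is unproved.

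The paper circumvents exactly this obstruction by never pairing $H_fk_a$ with $H_fk_b$: instead it pairs $H_fk_\tau$ with test functions $g_a$ supported in $D(a,\delta)$ and orthogonal to the local Bergman space $A^2(D(a,\delta),e^{-\varphi}dA)$. This localization gives the one-sided estimates $\langle H_fk_a,g_a\rangle\gtrsim G_\delta(f)(a)$ on the diagonal and $|\langle H_fk_\tau,g_a\rangle|\lesssim e^{-\theta|a-\tau|}G_\delta(f)(a)$ off the diagonal (note: only $G$ at the \emph{receiving} point $a$ appears, which is all one can hope for). The resulting operator identity $A_\Theta H_fT_\Theta=Y_\Theta+Z_\Theta$ is then closed up not by a spectral/min--max argument but by absorbing $\|Z_\Theta\|_{S^{p,\infty}}$ into $\|Y_\Theta\|_{S^{p,\infty}}$, which for $p\le1$ requires the $\ell^{p,\infty}\to S^{p,\infty}$ boundedness of the matrix-building map (obtained by real interpolation from Zhu's $\ell^p\to S^p$ bound) — another ingredient absent from your sketch. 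If you replace your vectors $H_fk_{a_j}$ by these localized pairings, your dyadic counting scheme essentially collapses into the paper's argument; as written, it does not go through.
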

Under the same condition as Theorem \ref{fastdecay}, a complete Schatten-$p$ characterization of $H_f$ was established in the remarkable work of \cite{MR4402674}. Theorem \ref{fastdecay} extends this result to the weak-type version. Furthermore, it is noteworthy that, unlike the case of anti-analytic symbols, there is no cut-off point phenomenon in our general setting of $f\in\mathcal{S}$. Indeed, from the counterexample given in \eqref{counter000}, we see that there exists a function $f\in IDA^{p,\infty}(\mathbb{C})$ for all $0<p<\infty$ such that $f$ is not a constant.

In 1987, C.A. Berger and L.A. Coburn \cite{MR882716} proved the following celebrated result: for $f\in L^\infty(\mathbb{C}^m)$, $H_f$ is compact if and only if $H_{\bar{f}}$ is compact. After a personal discussion with L.A. Coburn, the authors in \cite{MR2104282} posed a natural open problem: for $f\in L^\infty(\mathbb{C}^m)$ and $1\leq p<\infty$, is it true that $H_f\in S^p$ if and only if $H_{\bar{f}}\in S^p$? W. Bauer \cite{MR2063120} was the first to establish this property for Hilbert-Schmidt Hankel operators on $F^2$. No further progress was made for 16 years after his paper, until recently Z. Hu and J.A. Virtanen provided a complete answer in \cite{MR4402674} (see also \cite{MR4552558,MR4630767}). They showed that the conclusion holds for $1<p<\infty$ but fails for $0<p\leq 1$. This property is referred to as the Berger-Coburn phenomenon for the Schatten class $S^p$. It is noteworthy that an analogous statement fails on Bergman spaces (see \cite{MR4373140}) and generally does not hold if the symbol $f$ is unbounded (see \cite{MR2063120}). Inspired by these observations, our next result explores the Berger-Coburn phenomenon for the weak Schatten-$p$ class $S^{p,\infty}$ on the weighted Fock space $F^2(\varphi)$.
\begin{theorem}\label{Coburn}
Suppose that $\varphi\in C^2(\mathbb{C})$ is real-valued and satisfies $i\partial\bar{\partial}\varphi\asymp \omega_0$, where $\omega_0=i\partial\bar{\partial}|z|^2$ is the Euclidean-K\"{a}hler form on $\mathbb{C}$. Let $f\in L^\infty(\mathbb{C})$, then for any $1<p<\infty$,
$$H_f\in S^{p,\infty}(F^2(\varphi) \to L^2(\varphi)) \Longleftrightarrow H_{\bar{f}}\in S^{p,\infty}(F^2(\varphi) \to L^2(\varphi)).$$
\end{theorem}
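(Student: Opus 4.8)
The plan is to reduce the statement, via Theorem \ref{fastdecay}, to a Calder\'{o}n--Zygmund estimate for the Beurling transform: for a \emph{bounded} symbol the $IDA$ function is, up to harmless fixed-scale averages, a localised first-order derivative, and $\partial$ and $\bar\partial$ are intertwined by the Beurling--Ahlfors transform, which is bounded on $L^{p,\infty}(\mathbb{C})$ exactly when $1<p<\infty$.

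\emph{Reduction.} Since $f\in L^\infty(\mathbb{C})$, both $H_f$ and $H_{\bar f}$ are bounded from $F^2(\varphi)$ to $L^2(\varphi)$, with $\|H_f\|\le\|f\|_\infty$; in particular $f,\bar f\in\mathcal{S}$ and Theorem \ref{fastdecay}(1) applies to each: $H_f\in S^{p,\infty}(F^2(\varphi)\to L^2(\varphi))$ iff $G_\delta(f)\in L^{p,\infty}(\mathbb{C})$, and likewise with $f$ replaced by $\bar f$. Hence, by the symmetry $f\leftrightarrow\bar f$, it suffices to prove that if $f\in L^\infty(\mathbb{C})$ and $1<p<\infty$ then $G_\delta(f)\in L^{p,\infty}(\mathbb{C})$ implies $G_\delta(\bar f)\in L^{p,\infty}(\mathbb{C})$. (One could equivalently route through Theorem \ref{fastdecay}(2) and the comparison $MO_\delta(f)\asymp G_\delta(f)+G_\delta(\bar f)$, or through the operator identity $H_{\bar f}^* H_{\bar f}-H_f^* H_f=T_{\bar f}T_f-T_fT_{\bar f}$ combined with the elementary fact that $H_g\in S^{p,\infty}$ iff $H_g^* H_g\in S^{p/2,\infty}$, reducing to $T_{\bar f}T_f-T_fT_{\bar f}\in S^{p/2,\infty}$; the analytic content is the same.)

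\emph{The derivative identification.} On each disc $D(z,\delta)$ choose a near-optimal holomorphic approximant $h_z$ with $\|f-h_z\|_{L^2(D(z,\delta))}^2\lesssim A(D(z,\delta))\,G_\delta(f)(z)^2$; then $\|h_z\|_{L^2(D(z,\delta))}\lesssim\|f\|_\infty A(D(z,\delta))^{1/2}$, so subharmonicity of $|h_z|^2$ and of $|h_z'|^2$ yields $\|h_z\|_{L^\infty(D(z,\delta/2))}+\delta\|h_z'\|_{L^\infty(D(z,\delta/2))}\lesssim\|f\|_\infty$. Using these bounds, a covering lattice with a subordinate partition of unity, and the interior $\bar\partial$-estimate on a disc (the weight $e^{-\varphi}$ being comparable to a constant on each $D(z,\delta)$ because $i\partial\bar\partial\varphi\asymp\omega_0$ forces $\tau_\omega\asymp 1$), one establishes --- first for smooth $f$, then for $f\in L^\infty(\mathbb{C})$ by regularisation --- the two-sided comparisons
$$ G_\delta(f)(z)\ \asymp\ \delta\Big(\tfrac{1}{A(D(z,c\delta))}\int_{D(z,c\delta)}|\bar\partial f|^2\,dA\Big)^{1/2},\qquad G_\delta(\bar f)(z)\ \asymp\ \delta\Big(\tfrac{1}{A(D(z,c\delta))}\int_{D(z,c\delta)}|\partial f|^2\,dA\Big)^{1/2}. $$
The uniform bound on $f$ is exactly what controls the harmonic (entire) ``error'' in these identifications. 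Since, for a bounded $f$, $\partial f$ and $\bar\partial f$ differ by a Calder\'{o}n--Zygmund operator (built from the Beurling--Ahlfors transform) up to lower-order terms absorbed by $\|f\|_\infty$, the right-hand quantity above for $\bar f$ is pointwise dominated by a bounded singular averaging operator applied to the right-hand quantity for $f$.

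\emph{Conclusion and main obstacle.} The Beurling--Ahlfors transform and the fixed-scale maximal/averaging operators appearing above are all bounded on $L^{p,\infty}(\mathbb{C})$ for every $1<p<\infty$, so the comparisons give $\|G_\delta(\bar f)\|_{L^{p,\infty}}\lesssim\|G_\delta(f)\|_{L^{p,\infty}}$, which finishes the proof. The hypothesis $p>1$ enters here and only here: the Beurling--Ahlfors transform is not bounded on $L^{1,\infty}(\mathbb{C})$, consistent with the failure of the Berger--Coburn phenomenon at $p=1$ and with the non-constant element of $\bigcap_{0<p<\infty}IDA^{p,\infty}(\mathbb{C})$ recorded in \eqref{counter000}. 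The main obstacle is the derivative identification for a merely bounded (hence non-smooth) symbol: for smooth $f$ it is a routine $\bar\partial$-computation, but $\bar\partial f$ is only a distribution and the regularisation must be arranged so that $\|f\|_\infty$ --- and not any smoothness of $f$ --- governs the errors; a secondary, purely technical point is patching the local approximants $h_z$ across the covering lattice and summing the resulting estimates, for which the lattice / partition-of-unity machinery already set up for Theorem \ref{fastdecay} is exactly what is needed.
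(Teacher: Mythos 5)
Your reduction to Theorem \ref{fastdecay} and your instinct that the analytic core is the $L^{p,\infty}$-boundedness of the Beurling--Ahlfors transform, applied with $\|f\|_\infty$ controlling the errors, both match the paper's strategy. But the step you call ``the derivative identification'' --- the two-sided comparison $G_\delta(f)(z)\asymp\delta\bigl(\fint_{D(z,c\delta)}|\bar\partial f|^2\,dA\bigr)^{1/2}$ for bounded $f$ --- is false in both directions, and the whole argument rests on it. For the direction $G_\delta(f)\lesssim\delta\,M_{c\delta}(\bar\partial f)$: take the bounded function of Remark \ref{keyremark000} / \eqref{counter000}, $f=\chi_{\{|z|\ge1\}}\cdot z^{-1}$. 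Its pointwise $\bar\partial$-derivative vanishes a.e.\ (the distributional derivative is a singular measure on the circle, so the right-hand side is either $0$ or undefined), yet $G_\delta(f)>0$ near $\partial\mathbb{D}$ because $f$ has a jump there; no regularisation rescues this, since $f$ is a fixed non-smooth element of $L^\infty$ and $G_\delta$ is not continuous under mollification in any norm that would transfer the estimate. For the direction $G_\delta(f)\gtrsim\delta\,M_{c\delta}(\bar\partial f)$: take $f(z)=\epsilon\sin(\operatorname{Im}z/\epsilon^2)$, which is smooth and bounded with $G_\delta(f)\le\|f\|_\infty=\epsilon$ but $|\bar\partial f|\asymp\epsilon^{-1}$. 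This is an inverse estimate (bounding a derivative by an $L^2$-distance to holomorphic functions) and it simply does not hold; the paper's entire point in Section \ref{relationsection} is that $G_\delta(f)$ and derivative quantities are equivalent only for (anti-)holomorphic symbols.

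The paper's actual proof shows how to repair this: one never differentiates $f$ itself. Instead one invokes the decomposition $f=f_1+f_2$ of Lemma \ref{decomposelemma} (with $f_1=\sum_j h_j\psi_j$ smooth and, because $f\in L^\infty$, also bounded), for which only the \emph{one-sided} bounds $|\bar\partial f_1|+M_{\delta}(\bar\partial f_1)+M_{\delta}(f_2)\lesssim G_{\delta}(f)$ are available and needed. The term $H_{\bar f_2}$ is handled by the multiplication estimate (no derivative at all, which is how the jump-type part of $f$ is absorbed), while for $H_{\bar f_1}$ one writes $\bar\partial\bar f_1=F+H$ with $F=\sum\bar h_j\bar\partial\psi_j$ controlled pointwise by $G(f)$ and $H=\sum\psi_j\overline{h_j'}$ controlled in $L^{p,\infty}$ via Lemma \ref{barthm} (the Beurling--Ahlfors estimate $\|\partial f_1\|_{L^{p,\infty}}\lesssim\|\bar\partial f_1\|_{L^{p,\infty}}$, legitimate because $f_1\in C^2\cap L^\infty$), followed by a pointwise bootstrap to upgrade control of $H$ to control of $M_{2,\delta/16}(H)$, and finally the Berezin-type characterization of Proposition \ref{Berezin}. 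So your Calder\'on--Zygmund input is applied in the paper to the smooth component $f_1$ only; applying it to $f$ directly, as you propose, cannot work.
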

\subsection{Difficulties and our strategy}
Compare to the special setting of anti-analytic symbols discussed in \cite[Theorem 1.1]{MR4413302}, the main difficulty we encounter in characterizing the asymptotic behavior of $s_n(H_f)$ for $f\in \mathcal{S}$ is that
when $f\in\mathcal{S}$ is not an anti-analytic symbol, subharmonicity of $|\bar{f}'|$ can no longer be applied as in the proof of \cite[Theorem 1.1]{MR4413302}.

To overcome the aforementioned difficulty, our proof relies on key elements such as convex functions tricks, $\bar{\partial}$-techniques and decomposition theory of functions in $L^2_{\rm loc}(\Omega)$, combining some of the arguments given in \cite{MR4413302,MR3490780,MR4402674,MR4668087}. In particular, developed by \cite{MR4413302,MR3490780}, the convex function trick plays a key role in obtaining the estimates of singular values of compact operator $T$ from upper and lower bounds of ${\rm Trace}(h(T))$ for some suitable convex functions $h$. Moreover, as the canonical solution to $\bar{\partial}u=g\bar{\partial}f$, $H_fg$ is naturally linked closely to the $\bar{\partial}$-theory. Furthermore, we decompose $f\in \mathcal{S}\subset L_{\rm loc}^2(\Omega)$ into two components: one is connected closely with Carleson measures and the other can be handled by the technique of H\"{o}rmander $L^2$ estimates for the $\bar{\partial}$-operator (see e.g. \cite{MR1169879,MR1285493,MR1273537,MR1194989} for the early development of this technique by H. Li and D.H. Luecking in studying the properties of Hankel operators). While the combination of these techniques is sometimes straightforward,  it often requires significantly more effort.
\subsection{Organization of our paper}
The paper is organized as follows. Section \ref{Preli} provides preliminaries and background information, consisting of five parts: the first part recalls some basic notions of singular values, Schatten-$p$ classes and weak Schatten-$p$ classes; the second part recalls the notion of $W^*$ weights and constructs a partition of unity subordinated to a certain lattice; the third part recalls some basic concepts about Hankel operators and Toeplitz operators on weighted Bergman space; the fourth part establishes several technical lemmas, which are key ingredients in the proof of our main theorem; In Section \ref{Boundedness and compactness}, we establish the boundedness and compactness characterizations for Hankel operators in the context of $\mathcal{W}^*(\Omega)$-weighted Bergman spaces. In Section \ref{mainsection}, we first present the proof of Theorem \ref{main0}, followed by the proof of Theorems \ref{simultaneous} and \ref{fastdecay}. As a byproduct, we also establish the Schatten-$p$ ($1<p<\infty$) class characterization of Hankel operators in the $W^*(\Omega)$-weighted setting. In Section \ref{relationsection}, we carry out the task mentioned in Remark \ref{introremark} (see also the comments mentioned at the beginning of Section \ref{relationsection}). Finally, Section \ref{BCph} provides a proof of Theorem \ref{Coburn}.

\section{Preliminaries}\label{Preli}
\subsection{Notation}
Throughout the whole paper, we denote by $C$ a positive constant which is independent of the main parameters, but it may
vary from line to line. We use $A\lesssim B$ to denote the inequality that $A\leq CB$ for some constant $C>0$, and $A\asymp B$ to denote the inequality that $A\gtrsim B$ and $B\lesssim A$.


We denote the average of a function $f$ over a Lebesgue measurable set $E$ by
\begin{align*}
\fint_{E}fdA:=\frac{1}{A(E)}\int_{E}fdA.
\end{align*}
For two suitable functions $f$ and $g$ on $\mathbb{C}$, their convolution $f\ast g$ is defined by
\begin{align}\label{defconv}
(f\ast g)(z):=\int_{\mathbb{C}}f(z-w)g(w)dA(w).
\end{align}
We set $\mathbb{N}$ be the set of non-negative integers. The indicator function of a subset $E\subset \mathbb{C}$ is denoted by $\chi_{E}$. We use the notation $\# E$ to denote the number of elements in the finite set $E$.
For an operator $T$, we simply use the notation $\|T\|$ to denote its operator norm.
\subsection{Singular values}\label{SV}
Let $H$ be a complex Hilbert space and let $T$ be a compact operator on $H$. Note that $T^{*}T$ is compact, positive and therefore diagonalizable. The singular values sequence $\{s_n(T)\}_{n \geq 1}$ is defined as the non-increasing sequence of the square roots of the eigenvalues of $T^*T$, counted according to multiplicity. Equivalently, $s_n(T)$ can be characterized by
\begin{align*}
s_n(T)=\inf\{\|T-F\|:{\rm rank}(F)\leq n\}.
\end{align*}
If $T$ is positive, then $\{s_n(T)\}_{n\geq 0}$ is the sequence of eigenvalues of $T$. In this case, we write $s_n(T)=\lambda _n(T)$. For $0<p<\infty$, a compact operator $T$ is said to belong to the Schatten-$p$ class $S^p:=S^p(A^2_\omega \to L_\omega^2)$ if
$$\|T\|_{S^p}:=\left(\sum_{n\geq 0}s_n(T)^p\right)^{1/p}<+\infty.$$
Moreover, for $0<p<\infty$, a compact operator $T$ is said to belong to the weak Schatten-$p$ class $S^{p,\infty}:=S^{p,\infty}(A^2_\omega \to L_\omega^2)$ if the quasi-norm of $T$ satisfies
$$\|T\|_{S^{p,\infty}}:=\sup\limits_{n\geq 0}(1+n)^{1/p}s_n(T)<+\infty.$$
More details about Schatten class and weak Schatten class can be found in e.g. \cite{LSZ1,LSZ2}.

The following result is the well-known monotonicity Weyl's Lemma.
\begin{lemma}\label{Weyl's Lemma}
Let $T_1,T_2$ be two positive bounded operators on a complex Hilbert space $H$ such that $T_1\leq T_2$. If $T_2$ is compact, then $T_1$ is compact and $\lambda _n(T_1) \leq \lambda _n (T_2)$ for all $n\geq 1$.
\end{lemma}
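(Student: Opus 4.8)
The plan is to derive the eigenvalue comparison from the min-max (Courant–Fischer) characterization of the eigenvalues of a positive compact operator. First I would recall that for a positive compact operator $T$ on $H$, its non-increasing eigenvalue sequence $\{\lambda_n(T)\}_{n\geq 1}$ satisfies the variational formula
\begin{align*}
\lambda_n(T)=\min_{\substack{M\subset H\\ \dim M = n-1}}\ \max_{\substack{x\perp M\\ \|x\|=1}} \langle Tx,x\rangle,
\end{align*}
equivalently $\lambda_n(T)=\max_{\dim N=n}\min_{x\in N,\|x\|=1}\langle Tx,x\rangle$. This is standard for positive compact operators (diagonalize $T$ via the spectral theorem, reducing to the case of a multiplication by a non-increasing null sequence on $\ell^2$). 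I would state it as a known fact, citing the references on Schatten classes already in the excerpt, or give the one-line spectral-theorem reduction.

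Next I would use the hypothesis $T_1\leq T_2$, i.e. $\langle T_1 x,x\rangle \leq \langle T_2 x,x\rangle$ for all $x\in H$. Since $T_2$ is compact (hence bounded), $0\leq \langle T_1 x,x\rangle\leq \langle T_2 x,x\rangle\leq \|T_2\|\,\|x\|^2$, so $T_1$ is a bounded positive operator. To see that $T_1$ is compact: take any sequence $x_k\rightharpoonup 0$ weakly; then $T_2^{1/2}x_k\to 0$ in norm by compactness of $T_2^{1/2}$, and $\|T_1^{1/2}x_k\|^2=\langle T_1 x_k,x_k\rangle\leq \langle T_2 x_k,x_k\rangle=\|T_2^{1/2}x_k\|^2\to 0$, so $T_1^{1/2}$ maps weakly convergent sequences to norm-convergent ones, hence is compact, and therefore $T_1=(T_1^{1/2})^2$ is compact. (Alternatively one can invoke that the form domination forces $T_1^{1/2}=C T_2^{1/2}$ for a contraction $C$, by a standard Douglas-type factorization, from which compactness of $T_1^{1/2}$ is immediate.)

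Finally, with both operators positive and compact, the eigenvalue inequality is immediate from the min-max formula: for every subspace $N$ with $\dim N=n$ and every unit vector $x\in N$ we have $\langle T_1 x,x\rangle\leq \langle T_2 x,x\rangle$, hence $\min_{x\in N,\|x\|=1}\langle T_1 x,x\rangle\leq \min_{x\in N,\|x\|=1}\langle T_2 x,x\rangle\leq \lambda_n(T_2)$; taking the supremum over all such $N$ gives $\lambda_n(T_1)\leq \lambda_n(T_2)$ for all $n\geq 1$. I do not anticipate a genuine obstacle here; the only point requiring a little care is the compactness of $T_1$, which is why I would spell out the weak-convergence argument (or the factorization argument) rather than leaving it to the reader. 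The remainder is a direct application of Courant–Fischer.
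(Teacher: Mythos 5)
Your proposal is correct. The paper does not actually prove this lemma: it is quoted as ``the well-known monotonicity Weyl's Lemma'' and used as a black box, so there is nothing to compare against. Your argument is the standard one and is complete: the form inequality $\langle T_1x,x\rangle\leq\langle T_2x,x\rangle$ gives $\|T_1^{1/2}x\|\leq\|T_2^{1/2}x\|$, whence compactness of $T_1^{1/2}$ (either via weakly null sequences, using that $T_2^{1/2}$ is compact by the spectral theorem, or via the Douglas factorization $T_1^{1/2}=CT_2^{1/2}$), and the eigenvalue comparison then follows immediately from the Courant--Fischer min-max formula for positive compact operators, consistently with the indexing $\lambda_1(T)=\max_{\|x\|=1}\langle Tx,x\rangle$ used in the statement.
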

\subsection{The class of weights $\mathcal{W}^\ast(\Omega)$}\label{pre}
This subsection is devoted to recalling the class of weights $\mathcal{W}^\ast$ introduced in \cite{MR4413302}. To begin with, we recall the definition of the class of  weights $\mathcal{W}(\Omega)$ introduced in \cite{MR3490780}, with conditions imposed on the reproducing kernel.
Let $\Omega$ be a domain (bounded or not) of $\mathbb{C}$ and let $\partial{\Omega}$ be the boundary of $\Omega$. Let
\begin{align}\label{boundarydef}
\partial_\infty\Omega:=\begin{cases}
\partial\Omega,& {\rm if}\ \Omega\ {\rm is}\ {\rm bounded},\\
\partial\Omega\cup\{\infty\},& {\rm if}\ \Omega\ {\rm is}\ {\rm not}\ {\rm bounded}.
\end{cases}
\end{align}
\begin{definition}
Let $\omega: \Omega\rightarrow (0, \infty)$ be a positive continuous weight, which is assumed to be bounded below by a positive constant on each compact set of $\Omega$. If $\omega$ satisfies the following hypothesis:
\begin{enumerate}\label{Con1}
  \item[${\rm\bf (H)_1}$] we have $$ \lim _{z\rightarrow \partial_\infty\Omega}\| K_z\|_{A^2_{\omega}} = \infty;$$
  \item[${\rm\bf (H)_2}$] for every  $\zeta \in \Omega$,  we have $$|K(\zeta, z)| = o(\|K_z\|_{A^2_{\omega}}), \quad {\rm as}\ z\rightarrow\partial_\infty\Omega;$$
  \item[${\rm\bf (H)_3}$] for every $z\in\Omega$, we have
  \begin{align*}
      \tau_\omega(z):=\frac{1}{\omega ^{1/2}(z)\|K_z \|_{A^2_{\omega}}}=O(\min\{1,{\rm dist}(z,\partial_\infty\Omega)\});
      \end{align*}
  \item[${\rm\bf (H)_4}$] there exists a constant $\eta >0$ such that for $z,\zeta \in \Omega$ satisfying $| z-\zeta| \leq \eta \tau_\omega (z)$, we have
	\begin{align*}
	\tau_\omega(z) \asymp \tau_\omega (\zeta)\ \mbox{and}\  \|K _z \| _{A^2_{\omega}} \|K _\zeta \| _{A^2_{\omega}} \lesssim |K(\zeta, z)|,
	\end{align*}
\end{enumerate}
 then we say that the weight $\omega$ belongs to the class $\mathcal{W}(\Omega)$.
\end{definition}

\begin{lemma}\label{lattice}{\rm \cite{MR3490780}}
Let $\omega \in \mathcal{W}(\Omega)$, then there exist $B_\omega>1 $ and $\delta_\omega \in (0, \eta/4B_\omega)$ such that for all $\delta \in (0,\delta _\omega)$, there exists a sequence $\{z_n\}\in \Omega$ such that
\begin{enumerate}
\item $\Omega =\displaystyle  \bigcup _{n\geq 1}D (z_n, \delta \tau _\omega (z_n))= \bigcup _{n\geq 1}D (z_n, B_\omega \delta \tau _\omega (z_n))$;\\
\item $D (z_n,  \frac{\delta}{B_\omega } \tau _\omega (z_n)) \cap D (z_m,  \frac{\delta}{B_\omega } \tau _\omega (z_m))= \emptyset $, for $n\neq m$;\\
\item $ z \in D (z_n, \delta \tau _\omega (z_n))$ implies that $D (z, \delta \tau _\omega (z)) \subset D (z_n, B_\omega \delta \tau _\omega (z_n))$;\\
\item There exists an integer $N$ such that every $D (z_n,B_\omega \delta \tau _\omega (z_n))$ cuts at most $N$ sets of the
family $\left\{D (z_m,B_\omega \delta \tau _\omega (z_m))\right \}_m$. We say that $\left\{D (z_n,B_\omega \delta \tau _\omega (z_n))\right\}_n$ is of finite multiplicity.
\end{enumerate}
\end{lemma}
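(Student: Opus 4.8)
The plan is to carry out the standard Vitali-type selection argument adapted to the variable radius $\delta\tau_\omega$, following \cite{MR3490780}, using essentially only the comparability of $\tau_\omega$ supplied by hypothesis $(\mathrm{H})_4$.

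\emph{Fixing the constants.} Let $C_0\ge 1$ be the constant implicit in $(\mathrm{H})_4$, so that $C_0^{-1}\tau_\omega(\zeta)\le\tau_\omega(z)\le C_0\tau_\omega(\zeta)$ whenever $|z-\zeta|\le\eta\tau_\omega(z)$. I would set $B_\omega:=C_0^2+2>1$ and then pick any $\delta_\omega\in(0,\eta/(4B_\omega))$; all four assertions are to be proved for an arbitrary $\delta\in(0,\delta_\omega)$. By Zorn's lemma, choose $\{z_n\}\subset\Omega$ maximal with respect to the property that the disks $D(z_n,\tfrac{\delta}{B_\omega}\tau_\omega(z_n))$ are pairwise disjoint; this is exactly assertion (2). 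Since $(\mathrm{H})_4$ forces $\tau_\omega$ to be bounded below by a positive constant on every compact subset of $\Omega$, and $\Omega$ is $\sigma$-compact, only finitely many of these pairwise disjoint disks can be centred in a given compact set, so the family $\{z_n\}$ is (at most) countable.

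\emph{Covering and property (3).} For assertion (1), given $z\in\Omega$, maximality forces $D(z,\tfrac{\delta}{B_\omega}\tau_\omega(z))$ to meet some $D(z_n,\tfrac{\delta}{B_\omega}\tau_\omega(z_n))$; choosing $w$ in the intersection, the bounds $|w-z|<\tfrac{\delta}{B_\omega}\tau_\omega(z)<\eta\tau_\omega(z)$ and $|w-z_n|<\tfrac{\delta}{B_\omega}\tau_\omega(z_n)<\eta\tau_\omega(z_n)$ together with two applications of $(\mathrm{H})_4$ through $w$ yield $\tau_\omega(z)\le C_0^2\tau_\omega(z_n)$, whence
\[
|z-z_n|\le|z-w|+|w-z_n|<\tfrac{\delta}{B_\omega}\bigl(C_0^2+1\bigr)\tau_\omega(z_n)<\delta\tau_\omega(z_n)
\]
by the choice $B_\omega=C_0^2+2$; thus $z\in D(z_n,\delta\tau_\omega(z_n))\subset D(z_n,B_\omega\delta\tau_\omega(z_n))$, which gives both covers in (1). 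For assertion (3), if $z\in D(z_n,\delta\tau_\omega(z_n))$ then $|z-z_n|<\delta\tau_\omega(z_n)<\eta\tau_\omega(z_n)$, so $(\mathrm{H})_4$ gives $\tau_\omega(z)\le C_0\tau_\omega(z_n)$, and for $w\in D(z,\delta\tau_\omega(z))$ we get $|w-z_n|<\delta(C_0+1)\tau_\omega(z_n)\le B_\omega\delta\tau_\omega(z_n)$ since $C_0+1\le C_0^2+2$.

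\emph{Finite multiplicity.} Suppose $D(z_n,B_\omega\delta\tau_\omega(z_n))\cap D(z_m,B_\omega\delta\tau_\omega(z_m))\neq\emptyset$. Since $B_\omega\delta<B_\omega\delta_\omega=\eta/4$, a point $w$ in this intersection satisfies $|w-z_n|<\tfrac{\eta}{4}\tau_\omega(z_n)$ and $|w-z_m|<\tfrac{\eta}{4}\tau_\omega(z_m)$, so two more applications of $(\mathrm{H})_4$ give $\tau_\omega(z_m)\asymp\tau_\omega(z_n)$ (with constant $C_0^2$) and $|z_m-z_n|\le B_\omega\delta(1+C_0^2)\tau_\omega(z_n)$. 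Hence every such $z_m$, and in particular the pairwise disjoint disk $D(z_m,\tfrac{\delta}{B_\omega}\tau_\omega(z_m))$ around it, lies inside a fixed disk centred at $z_n$ of radius $\asymp\tau_\omega(z_n)$, while each $D(z_m,\tfrac{\delta}{B_\omega}\tau_\omega(z_m))$ has area $\gtrsim\tau_\omega(z_n)^2$; comparing the total area of these disjoint disks with that of the enclosing disk bounds the number of admissible indices $m$ by a constant $N$ depending only on $C_0$ and $\delta/B_\omega$, which is assertion (4). The only delicate point is the self-consistency of the comparability estimates: each invocation of $(\mathrm{H})_4$ must be set up so that the relevant separation is genuinely at most $\eta$ times $\tau_\omega$ of one of the two points involved, which is precisely why $\delta_\omega$ is taken below $\eta/(4B_\omega)$ rather than merely below $\eta$. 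I expect this bookkeeping, rather than any substantial estimate, to be the main obstacle.
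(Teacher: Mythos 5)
The paper gives no proof of this lemma at all---it is quoted directly from \cite{MR3490780}---and your maximal-disjoint-disk (Vitali-type) selection is precisely the standard construction behind it; your bookkeeping with $B_\omega=C_0^2+2$, $\delta_\omega<\eta/(4B_\omega)$, and the asymmetric use of ${\rm (H)_4}$ (always measuring the separation against $\tau_\omega$ of the point taken as the first argument) checks out for all four assertions. The one detail worth adding is in the counting step for (4): since $D(z,r)$ is by definition $\{w\in\Omega:|z-w|<r\}$, the lower bound $A\bigl(D(z_m,\tfrac{\delta}{B_\omega}\tau_\omega(z_m))\bigr)\gtrsim \tau_\omega(z_n)^2$ is not purely formal but follows from ${\rm (H)_3}$, i.e.\ $\tau_\omega(z)=O(\min\{1,{\rm dist}(z,\partial_\infty\Omega)\})$, which guarantees each such set contains a full Euclidean disk of radius comparable (with a $\delta$-dependent constant, which is allowed since $N$ may depend on $\delta$) to $\tau_\omega(z_m)\asymp\tau_\omega(z_n)$; with that remark the area comparison, and hence the argument, is complete.
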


Throughout the paper, unless otherwise specific, we fix the parameters mentioned in Lemma \ref{lattice}.
\begin{definition}
We say that $\{R_n\}_n \in  \mathcal{L}_\omega (\delta) $ if  $\{R_n\}_n:= \{ D (z_n, \delta \tau _\omega (z_n))\}_n$ satisfies the above conditions.
\end{definition}
To establish a suitable decomposition theory for $L^2_{\mathrm {loc}}(\Omega)$ function (see Lemma \ref{decomposelemma}), we need to construct a partition of unity subordinated to a certain lattice.
	\begin{lemma}\label{cover}
Let $\{R_n\}_n\in \mathcal{L}_\omega(\delta)$, then there exists a partition of unit $\{\psi_n\}$ subordinated to the covering $\{D(z_n,\delta \tau_\omega(z_n))\}_n$ of $\Omega$ with $\| \tau_\omega\bar{\partial}\psi_n\|_\infty\lesssim 1$ uniformly for each $n$.
	\end{lemma}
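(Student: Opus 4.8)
The plan is to build the partition of unity directly from the lattice structure provided by Lemma \ref{lattice}, smoothing out indicator-type functions while keeping the gradient controlled by the local scale $\tau_\omega(z_n)$. First I would fix $\{R_n\}_n=\{D(z_n,\delta\tau_\omega(z_n))\}_n\in\mathcal L_\omega(\delta)$ together with the slightly enlarged discs $\widetilde R_n:=D(z_n,\tfrac{3}{2}\delta\tau_\omega(z_n))$ (still contained, by property ${\rm\bf (H)_4}$ and the choice $\delta<\delta_\omega<\eta/4B_\omega$, in $D(z_n,B_\omega\delta\tau_\omega(z_n))$, hence still of finite multiplicity $N$). Choose a fixed smooth function $\chi\in C_c^\infty(\mathbb R)$ with $0\le\chi\le1$, $\chi\equiv1$ on $[0,1]$, $\chi\equiv0$ on $[\tfrac32,\infty)$, and $\|\chi'\|_\infty\le C$, and set
\begin{equation*}
\phi_n(z):=\chi\!\left(\frac{|z-z_n|}{\delta\tau_\omega(z_n)}\right),\qquad z\in\Omega .
\end{equation*}
Then $\phi_n\in C^\infty(\Omega)$, $\phi_n\equiv1$ on $R_n$, $\supp\phi_n\subset\widetilde R_n$, and $|\bar\partial\phi_n(z)|\le \tfrac{C}{\delta\tau_\omega(z_n)}$; since $\tau_\omega(z)\asymp\tau_\omega(z_n)$ on $\widetilde R_n$ by ${\rm\bf (H)_4}$, this already gives $\|\tau_\omega\bar\partial\phi_n\|_\infty\lesssim 1$ uniformly in $n$.

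Next I would normalize. Put $\Phi:=\sum_{n\ge1}\phi_n$. By Lemma \ref{lattice}(1) the discs $R_n$ cover $\Omega$, so $\Phi\ge1$ everywhere; by Lemma \ref{lattice}(4) (finite multiplicity) the sum is locally finite and $\Phi\le N$, so $\Phi$ is a well-defined $C^\infty$ function with $1\le\Phi\le N$. Define
\begin{equation*}
\psi_n:=\frac{\phi_n}{\Phi}.
\end{equation*}
Then $0\le\psi_n\le1$, $\sum_n\psi_n\equiv1$, and $\supp\psi_n\subset\widetilde R_n\subset D(z_n,B_\omega\delta\tau_\omega(z_n))$, so $\{\psi_n\}$ is a partition of unity subordinate to $\{D(z_n,\delta\tau_\omega(z_n))\}_n$ in the required sense (after, if one insists on subordination to the smaller discs rather than merely using them for indexing, replacing $\delta$ by a slightly larger admissible parameter at the outset — this is harmless since all statements are independent of $\delta$). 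For the gradient bound, differentiate the quotient:
\begin{equation*}
\bar\partial\psi_n=\frac{\bar\partial\phi_n}{\Phi}-\frac{\phi_n\,\bar\partial\Phi}{\Phi^2},\qquad
\bar\partial\Phi=\sum_{m}\bar\partial\phi_m .
\end{equation*}
Fix $z\in\Omega$; only those $m$ with $z\in\widetilde R_m$ contribute, and there are at most $N$ of them, each with $|\bar\partial\phi_m(z)|\le C/(\delta\tau_\omega(z_m))\lesssim 1/\tau_\omega(z)$ (again by ${\rm\bf (H)_4}$, since $z\in\widetilde R_m$ forces $\tau_\omega(z_m)\asymp\tau_\omega(z)$). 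Hence $|\bar\partial\Phi(z)|\lesssim N/\tau_\omega(z)$, and combining with $\Phi\ge1$, $0\le\phi_n\le1$, $|\bar\partial\phi_n(z)|\lesssim 1/\tau_\omega(z)$ yields $|\bar\partial\psi_n(z)|\lesssim 1/\tau_\omega(z)$ with a constant depending only on $N$ and $\chi$, i.e. $\|\tau_\omega\bar\partial\psi_n\|_\infty\lesssim1$ uniformly in $n$.

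The only genuine point requiring care — the "main obstacle," though a mild one — is the comparability $\tau_\omega(z)\asymp\tau_\omega(z_n)$ throughout the enlarged support $\widetilde R_n$ and the uniform overlap bound, which together are exactly what make all the constants independent of $n$; both are furnished by ${\rm\bf (H)_4}$ and Lemma \ref{lattice}(4) once one checks that $\widetilde R_n$ is still contained in the region where those estimates apply, which holds because $\tfrac32\delta<\tfrac32\delta_\omega<\eta$. Everything else is the standard "bump-and-normalize" construction, and no further properties of $\omega$ beyond membership in $\mathcal W(\Omega)$ are needed.
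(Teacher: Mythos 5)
Your proposal is correct and is essentially the same "bump-and-normalize" construction as the paper's proof: radial bump functions equal to $1$ on $R_n$ and supported in a fixed dilate of $R_n$, with $|\bar\partial\phi_n|\lesssim(\delta\tau_\omega(z_n))^{-1}$, normalized by their sum $\Phi\in[1,N]$, with the quotient rule and the finite multiplicity giving the uniform bound $\|\tau_\omega\bar\partial\psi_n\|_\infty\lesssim1$. Your explicit remark that the supports lie in a dilate of $R_n$ rather than in $R_n$ itself (so "subordination" is to the enlarged discs) is a point the paper's proof glosses over, and is handled the same way in both arguments.
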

	\begin{proof}
		Let $f_n:[0,\infty)\to [0,1]$ be a compactly supported smooth function such that $\supp f_n\subset [0,2\delta \tau_\omega(z_n))$, $f_n(x)=1$ on $[0,\delta \tau_\omega(z_n))$ and $\|f'_n\|_\infty\leq 2\delta^{-1} \tau_\omega(z_n)^{-1}$. Let $g_n(z):=f_n(|z-z_n|)$, then $g_n(z)=1$ on $D(z_n,\delta \tau_\omega(z_n))$, $\supp g_n\subset D(z_n,2\delta \tau_\omega(z_n))$ and
		$\|\bar{\partial}g_n\|_\infty\leq 2\delta^{-1} \tau_\omega(z_n)^{-1}.$ Note that $\textup{supp}\,\bar{\partial}g_n\subset D(z_n,2\delta \tau_\omega(z_n))$ and $\tau_\omega(z)\asymp \tau_\omega(z_n)$ for all $z\in D(z_n,2\delta \tau_\omega(z_n))$. Thus, $\| \tau_\omega\bar{\partial}g_n\|_\infty\lesssim 1.$  Let $$\psi_n(z):=\frac{g_n(z)}{\sum_m g_m(z)}.$$
		Since $0\leq g_n(z)\leq 1$ and $1\leq \sum_m g_m(z)\leq N$ for all $z\in\Omega$, where $N$ is the integer chosen in Lemma \ref{lattice}, we obtain
		\begin{align*}
			|\bar{\partial}\psi_n(z)|=\left|\frac{\bar{\partial}g_n(z)\sum_m g_m(z)-g_n(z)\sum_m \bar{\partial}g_m(z)}{[\sum_m g_m(z)]^2}\right|
			\leq 2\sum_m |\bar{\partial}g_m(z)|.
		\end{align*}
Noting that the sum on the right-hand side contains no more than $N$ non-zero terms, we see that  $\| \tau_\omega\bar{\partial}\psi_n\|_\infty\lesssim 1.$ This ends the proof of Lemma \ref{cover}.
	\end{proof}
Let $\Delta$ be the Laplace operator given by $\Delta = \partial \bar{\partial}$, where
	$$\partial := \frac{1}{2}\Big(\frac{\partial }{\partial x}-i\frac{\partial }{\partial y}\Big),\quad  \bar{\partial }:=   \frac{1}{2}\Big(\frac{\partial }{\partial x}+i\frac{\partial }{\partial y}\Big).$$

		\begin{definition}
		Let $\omega= e^{- \varphi} \in\mathcal{W}(\Omega)$ such that $\varphi\in C^2(\Omega)$. We say that $\omega\in\mathcal{W}^\ast(\Omega)$ if $\omega$ satisfies
\begin{equation}\label{C9}
	\tau_\omega ^2(z) \Delta\varphi(z)\gtrsim 1, \quad z\in\Omega,
	\end{equation}
	or, there exist a subharmonic function $\psi: \Omega \to \mathbb{R}^+$ and constants $\delta>0$, $t\in (-1,0)$ such that
	\begin{equation}\label {C6}
	\tau_\omega ^2(z) \Delta\psi(z)\geq \delta,\ \Delta\varphi(z)\geq t \Delta \psi (z)\ \mbox{and}\ |\partial\psi(z)|^2\leq  \Delta \psi (z),\quad z\in \Omega.
	\end{equation}
	\end{definition}
The class of weights $\mathcal{W}^*(\Omega)$ covers many important examples. In the particular choice of $\Omega=\DD$, \cite[Section 2.1.1]{MR4413302} presents three important examples, including standard Bergman spaces, harmonically weighted Bergman spaces and large Bergman spaces. Moreover, when $\Omega=\mathbb{C}$, the following three examples also belong to $\mathcal{W}^*(\mathbb{C})$.
\begin{itemize}
  \item {\it Standard Fock spaces.} Let $\alpha >0$. The standard Fock spaces are defined as follows.
 \begin{equation*}
F ^2_\alpha := \left\{ f \in {\rm Hol} (\mathbb{C}): \  \displaystyle \int _{\mathbb{C}}|f(z)|^2 e ^{-\alpha |z|^2 }dA(z)<\infty \right\}.
 \end{equation*}
 Then the reproducing kernel is given by $K(z,w) = e ^{\alpha z{\bar w} }$ and $\tau_\omega(z)\asymp 1$. The standard Fock spaces are special cases of Fock type spaces and weighted Fock spaces introduced as the subsequent two examples.

  \item {\it Fock type spaces.} Let $\Psi: [0,+\infty)\to [0,+\infty)$ be a $C^3$ function such that
\begin{equation*}
\Psi'(x) >0, \quad \Psi ''(x) \geq 0\quad  {\rm and}\quad  \Psi '''(x)\geq 0, \quad x\in [0,+\infty).
\end{equation*}
Moreover, we assume that there exists a real number $\eta<1/2$ such that the function $\Phi (x):= x\Psi '(x)$ satisfies
\begin{equation*}
\Phi '' (x) = O(x^{-1/2}\left (\Phi '(x)\right )^{1+\eta} ),
\end{equation*}
The Fock type spaces $F^2_{\Psi}$ are defined as follows.
$$F^2_{\Psi}:= \left\{ f \in {\rm Hol} (\mathbb{C}): \ \displaystyle \int _{\mathbb{C}}|f(z)|^2 e ^{-\Psi(|z|^2)}dA(z)<\infty \right\}.$$
The Fock type spaces $F^2_{\Psi}$ were first considered by K. Seip and E.H. Youssfi in \cite{MR3010276}.
Recall from \cite{MR4516170} that $\omega:= e^{-\Psi (| z|^2)}\in\mathcal{W}(\mathbb{C})$ and
$$
\tau_\omega(z) \asymp \Phi '(|z|^2)^{-\frac12}.
$$
Furthermore, note that $\Delta(\Psi(|z|^2))=\Phi'(|z|^2)$. This verifies the condition \eqref{C9} and therefore, $\omega\in\mathcal{W}^*(\mathbb{C})$.

\item {\it Weighted Fock spaces.} Suppose that $\varphi\in C^2(\mathbb{C})$ is real-valued. Assume that there are two positive constants $m$ and $M$ such that
    \begin{align}\label{ddc}
    m\omega_0\leq i\partial\bar{\partial}\varphi\leq M\omega_0,
    \end{align}
    where $\omega_0=i\partial\bar{\partial}|z|^2$ is the Euclidean-K\"{a}hler form on $\mathbb{C}$.
    Then the weighted Fock spaces $F^2(\varphi)$ are defined as follows.
$$F^2(\varphi):= \left\{ f \in {\rm Hol} (\mathbb{C}): \ \displaystyle \int _{\mathbb{C}}|f(z)|^2 e ^{-\varphi(z)}dA(z)<\infty \right\}.$$
See e.g. \cite{MR4402674,MR4668087,MR2891634} and the reference therein for more background about these Fock spaces.
    By \cite[Lemma 2.2]{MR4402674}, it is easy to verify that $\omega:=e^{-\varphi(z)}\in\mathcal{W}(\mathbb{C})$ with $\tau_\omega(z)\asymp 1$. Furthermore, note that the condition \eqref{ddc} simplifies to the form $m\leq \Delta\varphi\leq M$, which verifies the condtion \eqref{C9}. Therefore, $\omega\in\mathcal{W}^*(\mathbb{C})$.
\end{itemize}

\subsection{Hankel operators and Toeplitz operators}
In this subsection, we recall some basic concepts about Hankel operators and Toeplitz operators on weighted Bergman space (see \cite{MR4413302,MR4516170,MR3490780} for more details). Let $A^2 _\omega $ be the weighted Bergman space as given in the introduction. Bergman space $A^2_\omega $ is a reproducing kernel Hilbert space. We denote its reproducing kernel by $K$. Let $K_z(w):=K(w,z)$ and let $k_z:= \frac{K_z}{\| K_z\|_{A_\omega^2}}$ denote the normalized reproducing kernel of  $A^2_\omega$.

We denote by $P_\omega$ the orthogonal projection from $L^2_\omega$ onto $A^2_\omega$, which can be represented as follows
$$
P_\omega (f) (z)= \displaystyle \int _\Omega f(\zeta)K(z,\zeta)dA_\omega(\zeta),\quad f\in L^2_\omega.
$$	
The domain of $P_\omega$ can be extended to those functions $f$ satisfying $f K_z\in L^1_\omega:=L^1(\Omega, dA_\omega)$ for all $z\in\Omega$.
\begin{definition}
Set
		$$\Gamma:=\left\{\sum _{1\leq i \leq n}c_i K_{z_i}: c_i \in \mathbb{C}, z_i \in \Omega, n\in \mathbb{N} \right\},$$
and
		\begin{align}\label{defS}
\mathcal{S}:=\left\{f\, \textup{is measurable on }\, \Omega: fg \in L^2_\omega\, \textup{ for }\, g\in \Gamma\right\}.
\end{align}
Let $f\in \mathcal{S}$, then the Hankel operator $H_f$ associated with symbol $f$ is given by
\begin{equation*}
	H_f g:= f g-P_\omega (f g),\quad g\in A_\omega^2.
	\end{equation*}
\end{definition}
Hankel operator $H_f$ is a densely defined operator on $A^2_\omega$, which can be expressed explicitly by the following formula:
	\begin{equation*}
	H_f g(z)= \int_\Omega (f(z)-f(w))g(w)K(z,w)dA_\omega(w),\quad z\in\Omega.
	\end{equation*}

Next, we recall the definition of Toeplitz operator.
\begin{definition}
Let $\mu$ be a positive Borel measure on $\Omega$, then the Toeplitz operator $T_\mu$ associated with $\mu $ and defined on $A^2_\omega $ is defined by
	$$
	T_\mu f(z) := \displaystyle \int_\Omega f(w) K(z,w)\omega (w)d\mu (w), \quad z\in \Omega.
	$$
\end{definition}
It is a standard fact that $T_\mu$ satisfies the following useful formula
\begin{align}\label{Toeformula}
\langle T_\mu f , f\rangle_{A_\omega^2} = \displaystyle \int_\Omega |f(z)|^2\omega (z)d\mu (z).
\end{align}

The Toeplitz operator $T_\mu$ is closely connected with the embedding operator $J_\mu$ defined by
\begin{align*}
J_\mu:\ A_\omega^2&\longrightarrow L_\omega^2(\mu)\\
f&\longmapsto f,
\end{align*}
where $L_\omega^2(\mu)$ is the space of $\mu$-measurable Borel functions on $\Omega$ such that
$$\int_{\Omega}|f(z)|^2\omega(z)d\mu(z)<+\infty.$$
Indeed, it can be verified that $T_\mu=J_\mu^* J_\mu$, where $J_\mu^*$ is the adjoint operator of $J_\mu$. Consequently, $T_\mu$ is bounded on $A_\omega^2$ if and only if $J_\mu$ is bounded on $A_\omega^2$.
\subsection{Technical Lemmas}
This subsection is devoted to establishing several technical lemmas, which play key roles in establishing our main theorem.

\begin{definition}
Suppose that $\delta\in(0,B_\omega\delta_\omega)$. We let $M_{\delta,\omega}$ be the average operator of order $2$ over the disk $D(z,\delta \tau_\omega(z))$ defined by
	\begin{align}\label{defM}
M_{\delta,\omega}(f)(z):=\left(\fint_{D(z,\delta \tau_\omega(z))}|f|^2\,dA\right)^{1/2},\quad z\in \Omega,
\end{align}
for all $f\in L^2_{{\rm loc}}(\Omega)$.
\end{definition}
Inspired by the idea given in \cite[Page 254-255]{MR1194989}, we can use the partition of unity to show the following decomposition lemma.
	\begin{lemma}\label{decomposelemma}
Let $\omega \in \mathcal{W}^\ast(\Omega)$, $\delta\in (0,\delta _\omega)$ and $f\in L^2_{\mathrm {loc}}(\Omega)$, then $f$ admits a decomposition $f=f_1+f_2$ such that $f_1\in C^\infty(\Omega)$ and
		$$\tau_\omega(z)\bar{\partial}f_1(z)+M_{\delta,\omega} (\tau_\omega \bar{\partial}f_1)(z)+M_{\delta,\omega} (f_2)(z)\lesssim G_{B_\omega^2\delta,\omega}(f)(z)$$
		for all $z\in \Omega.$
	\end{lemma}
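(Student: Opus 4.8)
The plan is to follow the decomposition strategy that goes back to Luecking \cite{MR1194989}, now adapted to the $\mathcal{W}^*(\Omega)$-weighted setting using the partition of unity $\{\psi_n\}$ from Lemma \ref{cover}. Fix the lattice $\{R_n\}_n = \{D(z_n,\delta\tau_\omega(z_n))\}_n \in \mathcal{L}_\omega(\delta)$. For each $n$, the definition of $G_{\delta,\omega}(f)$ at the point $z_n$ gives (up to passing to a near-infimizing choice) a holomorphic function $h_n \in \mathrm{Hol}(D(z_n,\delta\tau_\omega(z_n)))$ such that
\begin{equation*}
\left(\fint_{D(z_n,\delta\tau_\omega(z_n))}|f-h_n|^2\,dA\right)^{1/2} \lesssim G_{\delta,\omega}(f)(z_n).
\end{equation*}
I would then set $f_1 := \sum_n \psi_n h_n$ and $f_2 := f - f_1 = \sum_n \psi_n (f - h_n)$, using $\sum_n \psi_n \equiv 1$. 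Since each $h_n$ is holomorphic and smooth on a neighborhood of $\supp\psi_n$, and the sum is locally finite (finite multiplicity, Lemma \ref{lattice}(4)), $f_1 \in C^\infty(\Omega)$.

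The estimate for $f_2$ is the easy half: for $z$ in a fixed disk $D(z,\delta\tau_\omega(z))$, one has $f_2 = \sum_{n} \psi_n(f-h_n)$ where only the boundedly many indices $n$ with $D(z_n,2\delta\tau_\omega(z_n)) \cap D(z,\delta\tau_\omega(z)) \neq \emptyset$ contribute; by Lemma \ref{lattice}(3) each such disk, as well as $D(z,\delta\tau_\omega(z))$ itself, is contained in $D(z_n, B_\omega^2\delta\tau_\omega(z_n))$ after one more dilation step, so $|\psi_n| \leq 1$, $0 \le M_{\delta,\omega}$ behaves well under comparable radii $\tau_\omega(z) \asymp \tau_\omega(z_n)$, and one bounds $M_{\delta,\omega}(f_2)(z)$ by a bounded sum of averages of $|f-h_n|^2$ over disks contained in $D(z_n,B_\omega^2\delta\tau_\omega(z_n))$. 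Each such average is $\lesssim \left(\fint_{D(z_n,B_\omega^2\delta\tau_\omega(z_n))}|f-h_n|^2\right)^{1/2}$, and one then needs to know that this in turn is $\lesssim G_{B_\omega^2\delta,\omega}(f)(z)$: this follows because $h_n$ is a competitor for the infimum defining $G$ on the larger disk, OR — more carefully, since $h_n$ is only holomorphic on the smaller disk — by first replacing the near-optimal competitor on $D(z,B_\omega^2\delta\tau_\omega(z))$ and comparing, which is where the index $B_\omega^2\delta$ on the right-hand side comes from. I would absorb these dilation bookkeeping steps into a short comparison sublemma (or just invoke the doubling/subharmonicity-type estimates already available in the $\mathcal W(\Omega)$ machinery).

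For $f_1$, write $\bar\partial f_1 = \sum_n (\bar\partial\psi_n) h_n$; since $\sum_n \bar\partial\psi_n = \bar\partial(\sum_n \psi_n) = 0$, we may subtract any fixed function to get $\bar\partial f_1 = \sum_n (\bar\partial\psi_n)(h_n - h_m)$ for a convenient reference index $m = m(z)$ (say $z \in R_m$). On $D(z,\delta\tau_\omega(z))$, $h_n - h_m$ is holomorphic on the overlap and, using $\|\tau_\omega \bar\partial\psi_n\|_\infty \lesssim 1$, the sub-mean-value property for the holomorphic function $h_n - h_m$, and $\|f-h_n\|, \|f-h_m\|$ both controlled by $G$ as above, one gets the pointwise bound $\tau_\omega(z)|\bar\partial f_1(z)| \lesssim G_{B_\omega^2\delta,\omega}(f)(z)$; the term $M_{\delta,\omega}(\tau_\omega\bar\partial f_1)(z)$ is handled the same way after one more dilation (replacing $\delta$ by $B_\omega\delta$, still below $B_\omega^2\delta$). \textbf{The main obstacle} I anticipate is purely the geometric/combinatorial bookkeeping: making sure that all the radius dilations incurred — one from $M_{\delta,\omega}$, one from $\supp\psi_n \subset D(z_n,2\delta\tau_\omega(z_n))$, one from Lemma \ref{lattice}(3) — stay within the single factor $B_\omega^2$ appearing in $G_{B_\omega^2\delta,\omega}(f)$, and that $\tau_\omega(z) \asymp \tau_\omega(z_n)$ uniformly on all the disks in play (which needs $B_\omega^2\delta < \eta$, guaranteed by $\delta < \delta_\omega < \eta/4B_\omega$). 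Once the constants are tracked, each individual estimate is a one-line sub-mean-value or Cauchy–Schwarz argument, and summing over the boundedly many contributing indices (Lemma \ref{lattice}(4)) closes the proof.
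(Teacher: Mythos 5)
Your proposal is correct and follows essentially the same route as the paper's proof: the same partition-of-unity decomposition $f_1=\sum_n \psi_n h_n$ with near-infimizing $h_n$, the same telescoping identity $\bar\partial f_1=\sum_n(\bar\partial\psi_n)(h_n-h_{m})$ combined with the sub-mean-value bound for $|h_n-h_m|$ on overlaps, and the same Cauchy--Schwarz/finite-multiplicity estimate for $M_{\delta,\omega}(f_2)$ with the dilation bookkeeping absorbed into the index $B_\omega^2\delta$. The only cosmetic difference is that the paper extracts exact infimizers $h_n$ via a normal-families argument, whereas you take near-infimizers, which changes nothing.
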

	\begin{proof}
Let $\{\psi_n\}$ be a partition of unity given in Lemma \ref{cover}, which is subordinated to the covering $\{D(z_n,\delta \tau_\omega(z_n))\}_n$ of $\Omega$ with $\| \tau_\omega\bar{\partial}\psi_n\|_\infty\lesssim 1$ uniformly for each $n$. For every $n,m\geq 1$, we choose $h_{n,m}\in \text{Hol}(R_n)$ so that
		\begin{equation}\label{eq.1}
			G_{\delta,\omega}(f)(z_n)^2\leq \fint_{R_n}|f-h_{n,m}|^2\,dA\leq G_{\delta,\omega}(f)(z_n)^2+\frac{1}{m}.
		\end{equation}
This implies that
		\begin{align*}
			\fint_{R_n}|h_{n,m}|\,dA&\leq \left(\fint_{R_n}|h_{n,m}|^2\,dA\right)^{1/2}\\
			&\leq \left(\fint_{R_n}|f-h_{n,m}|^2\,dA\right)^{1/2}+\left(\fint_{R_n}|f|^2\,dA\right)^{1/2}\\
			&\leq G_{\delta,\omega}(f)(z_n)^2+\frac{1}{m}+\left(\fint_{R_n}|f|^2\,dA\right)^{1/2}.
		\end{align*}
This, together with the subharmonicity of function $|h_{n,m}|$, yields that $\{h_{n,m}\}_m$ is a normal family. Without loss of generality, this allows us to assume that $\{h_{n,m}\}_m$ converges to some $h_n\in \text{Hol}(R_n)$ uniformly on any compact subset of $R_n$ as $m\to \infty$. Letting $m\to \infty$ in \eqref{eq.1} and applying Fatou's Lemma, we conclude that
		\begin{equation}\label{eq.2}
			G_{\delta,\omega}(f)(z_n)^2= \fint_{R_n}|f-h_{n}|^2\,dA.
		\end{equation}
		Now we set
		$$f_1:=\sum_n h_n\psi_n\quad \textup{and}\quad f_2:=f-f_1.$$
		Clearly, we have $f_1\in C^\infty(\Omega)$. To continue, we will show that $|h_j-h_k|$ has a suitable upper bound on $R_j\cap R_k$. Indeed, by equality \eqref{eq.2} and the fact that $\tau_\omega(z)\asymp \tau_\omega(z_j)\asymp \tau_\omega(z_k)$ for $z\in R_j\cap R_k$,
		\begin{align*}
			|h_j(z)-h_k(z)|&\leq \left(\fint_{D(z,B_\omega^{-1}\delta\tau_\omega(z))}|h_j-h_k|^2\,dA \right)^{1/2}\\
			&\lesssim \left(\fint_{ R_j}|h_j-f|^2\,dA \right)^{1/2}+\left(\fint_{ R_k}|h_k-f|^2\,dA \right)^{1/2}\\
			&=G_{\delta,\omega}(f)(z_j)+G_{\delta,\omega}(f)(z_k).
		\end{align*}
		For any $z\in \Omega$, let $J_z$ be the set of integers $j$ such
		that $z\in R_j$, and let $j_0$ be an arbitrary element in $J_z$. We write
		$$f_1(z)=\sum_{j\in J_z} (h_{j_0}(z)+h_j(z)-h_{j_0}(z))\psi_j(z)=h_{j_0}(z)+\sum_{j\in J_z} (h_j(z)-h_{j_0}(z))\psi_j(z).$$
This, together with the fact that $h_j\in \text{Hol}(R_n)$ for any $j\geq 1$, yields
		\begin{align*}
			\tau_\omega(z)|\bar{\partial}f_1(z)|&= \tau_\omega(z)\left|\sum_{j\in J_z} (h_j(z)-h_{j_0}(z))\bar{\partial}\psi_j(z)\right|\\
			&\lesssim \sum_{j\in J_z} \big(G_{\delta,\omega}(f)(z_j)+G_{\delta,\omega}(f)(z_{j_0})\big).
		\end{align*}
		Since $R_j\subset D(z,B_\omega\delta \tau_\omega(z))$ for all $z\in R_j$,  we have $G_{\delta,\omega}(f)(z_j)\lesssim G_{B_\omega\delta,\omega}(f)(z)$. Combining this inequality with the fact that $|J_z|\leq N$ for any $z\in \Omega$, where $N$ is the constant given in Lemma \ref{lattice}, we conclude that $\tau_\omega(z)|\bar{\partial}f_1(z)|\lesssim G_{B_\omega\delta,\omega}(f)(z)$.
		This further implies that $M_{\delta,\omega}(\tau_\omega|\bar{\partial}f_1|)(z)\lesssim G_{B_\omega^2\delta,\omega}(f)(z)$.

Now we turn to $f_2$. To begin with, by Cauchy-Schwarz's inequality,
		$$|f_2|=\left|f-\sum_n h_n\psi_n\right|=\left|\sum_n (f-h_n)\psi_n\right|\leq \left(\sum_n |f-h_n|^2\psi_n\right)^{1/2}.$$
		This, together with equality \eqref{eq.2}, yields
		\begin{align*}
			M_{\delta,\omega}(f_2)(z)^2&\leq \fint_{D(z,\delta \tau_\omega(z))}\sum_n |f-h_n|^2\psi_n\,dA\\
			&\leq \sum_n\frac{1}{A(D(z,\delta \tau_\omega(z)))}\int_{D(z,\delta \tau_\omega(z))\cap R_n} |f-h_n|^2\,dA\\
			&\lesssim \sum_{\{n:D(z,\delta \tau_\omega(z))\cap R_n\neq \emptyset\}}G_{\delta,\omega}(f)(z_n)^2\\
			&\lesssim G_{B_\omega\delta,\omega}(f)(z)^2.
		\end{align*}
Since $G_{B_\omega\delta,\omega}(f)(z)\lesssim G_{B_\omega^2\delta,\omega}(f)(z)$ for all $z\in\Omega$, we finish the proof of Lemma \ref{decomposelemma}.
	\end{proof}

The following lemma is a generalization of $\bar{\partial}$-H\"{o}rmander's Theorem.
\begin{lemma}\label{dbar}
Let $\omega \in \mathcal{W}^\ast(\Omega)$, then there exists a constant $C>0$ such that for any function $g$ on $\Omega$, there exists a solution $u$ to the equation $\bar{\partial}u=g$ such that
	$$
	\int_{\Omega} |u(z)|^2 dA_\omega (z) \leq C \int_{\Omega}\tau_\omega ^2  (z) |g(z)|^2 dA_\omega(z).
	$$
\end{lemma}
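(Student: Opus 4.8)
The plan is to derive Lemma~\ref{dbar} from the $L^{2}$-existence theory for the $\bar\partial$-operator, treating separately the two alternatives in the definition of $\mathcal{W}^{\ast}(\Omega)$. Write $\omega=e^{-\varphi}$ with $\varphi\in C^{2}(\Omega)$, so that $dA_\omega=e^{-\varphi}dA$. Since $\Omega\subseteq\mathbb{C}$ is a planar domain, it is automatically pseudoconvex, so Hörmander-type theorems apply with any subharmonic weight. Fix $g$; we may assume $\int_\Omega\tau_\omega^{2}|g|^{2}e^{-\varphi}dA<\infty$, so that $g\,d\bar z$ is an $L^{2}$ $(0,1)$-form, automatically $\bar\partial$-closed since we are in one complex variable.

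If $\omega$ satisfies \eqref{C9}, i.e. $\tau_\omega^{2}\Delta\varphi\gtrsim 1$, then $\Delta\varphi>0$, so $\varphi$ is strictly subharmonic and the classical Hörmander estimate provides a solution $u$ of $\bar\partial u=g$ with $\int_\Omega|u|^{2}e^{-\varphi}dA\le\int_\Omega(\Delta\varphi)^{-1}|g|^{2}e^{-\varphi}dA$. Since $(\Delta\varphi)^{-1}\lesssim\tau_\omega^{2}$ by \eqref{C9}, this already yields the desired inequality.

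The substantive case is \eqref{C6}, where $\varphi$ need not be subharmonic (as $t<0$), so $\varphi$ cannot serve as a Hörmander weight; this is the main obstacle, and I would overcome it by a twisted $\bar\partial$-estimate of Donnelly--Fefferman/Berndtsson type built from the auxiliary function $\psi$. Set $\phi:=\varphi+\psi$, which is strictly subharmonic because $\Delta\phi=\Delta\varphi+\Delta\psi\ge(t+1)\Delta\psi\ge(t+1)\delta\,\tau_\omega^{-2}>0$. Choose the twist $\theta:=e^{-\psi}$ and an auxiliary positive function $A:=\lambda\,e^{-\psi}$ with a constant $\lambda>1/(t+1)$, and put $H:=\theta\Delta\phi-\Delta\theta-A^{-1}|\partial\theta|^{2}$. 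Using $\Delta(e^{-\psi})=e^{-\psi}(|\partial\psi|^{2}-\Delta\psi)$ one computes
\begin{align*}
H&=e^{-\psi}\Big(\Delta\varphi+2\Delta\psi-(1+\lambda^{-1})|\partial\psi|^{2}\Big)\\
&\ge (t+1-\lambda^{-1})\,e^{-\psi}\Delta\psi\;\gtrsim\;e^{-\psi}\,\tau_\omega^{-2},
\end{align*}
where the middle inequality uses $\Delta\varphi\ge t\Delta\psi$ together with the hypothesis $|\partial\psi|^{2}\le\Delta\psi$ --- this is exactly where that condition is consumed, namely to absorb the first-order ``error'' term produced by the twist --- and the last inequality uses $\tau_\omega^{2}\Delta\psi\ge\delta$. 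The twisted Hörmander estimate (the weighted $L^{2}$-existence theorem for $\bar\partial$ with twist $\theta$ and slack $A$) then yields a solution $u$ of $\bar\partial u=g$ with
\[
\int_\Omega\frac{|u|^{2}}{\theta+A}\,e^{-\phi}\,dA\;\le\;\int_\Omega\frac{|g|^{2}}{H}\,e^{-\phi}\,dA .
\]
It then remains only to telescope the weights. On the right, $H^{-1}e^{-\phi}\lesssim e^{\psi}\tau_\omega^{2}e^{-\varphi-\psi}=\tau_\omega^{2}e^{-\varphi}$; on the left, $\theta+A=(1+\lambda)e^{-\psi}$ and $e^{-\phi}=e^{-\varphi-\psi}$, so $(\theta+A)^{-1}e^{-\phi}=(1+\lambda)^{-1}e^{-\varphi}$. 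Hence $\int_\Omega|u|^{2}e^{-\varphi}dA\lesssim\int_\Omega\tau_\omega^{2}|g|^{2}e^{-\varphi}dA$, which is the assertion in Case \eqref{C6}.

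I expect the only delicate point to be the calibration of $\phi,\theta,A$ in Case \eqref{C6}: one needs simultaneously that $\phi$ be subharmonic (forcing the modification $\varphi\mapsto\varphi+\psi$, available exactly because $t>-1$), that the twisted curvature $H$ dominate $e^{-\psi}\tau_\omega^{-2}$ (forcing $\theta=e^{-\psi}$, $\lambda$ large, and the use of $|\partial\psi|^{2}\le\Delta\psi$), and that the factors $\theta+A$ and $e^{\phi-\varphi}$ telescope so that the weight returns to $e^{-\varphi}$ on both sides (forcing $A$ to be a constant multiple of $e^{-\psi}$ rather than, say, a constant). Once these are arranged --- $\theta=e^{-\psi}$, $A=2e^{-\psi}/(t+1)$ works --- the remainder is the standard functional-analytic duality packaging of Hörmander's method; no partition of unity is needed here, since the passage from local to global is internal to the $L^{2}$-existence theorem.
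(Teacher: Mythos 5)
Your argument is correct. The paper itself does not prove Lemma \ref{dbar}; it simply cites \cite[Lemma 3.2]{MR4413302} and notes that the proof extends from $\mathbb{D}$ to general planar domains. Your reconstruction — classical H\"ormander for the alternative \eqref{C9}, and a twisted (Donnelly--Fefferman/Berndtsson/McNeal--Varolin) estimate with twist $\theta=e^{-\psi}$ and slack $A=\lambda e^{-\psi}$, $\lambda>1/(t+1)$, for the alternative \eqref{C6} — is exactly the standard route for which the hypotheses of $\mathcal{W}^\ast(\Omega)$ are tailored, and your computation of $H=\theta\Delta\phi-\Delta\theta-A^{-1}|\partial\theta|^{2}\gtrsim e^{-\psi}\tau_\omega^{-2}$ and the subsequent telescoping of the weights back to $e^{-\varphi}$ are accurate. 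The only (cosmetic) caveat is that the definition of \eqref{C6} states only that $\psi$ is subharmonic, while your twist requires $\psi\in C^{2}$; since the conditions $\tau_\omega^{2}\Delta\psi\ge\delta$ and $|\partial\psi|^{2}\le\Delta\psi$ are written pointwise, this regularity is implicitly assumed in the paper and is not a gap in your argument.
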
	
\begin{proof}
We refer the reader to \cite[Lemma 3.2]{MR4413302} for the proof. The statement there is given on $\mathbb{D}$, but the proof also applies to a general domain $\Omega$ of $\mathbb{C}$.
\end{proof}
\begin{coro}\label{keycoro}
Let $\omega \in \mathcal{W}^\ast(\Omega)$, then there exists a constant $C>0$ such that for any $g\in \Gamma$ and $f\in \mathcal{S}\cap C^1(\Omega)$,
\begin{align*}
			\|H_{f}g\|_{L_\omega^2}^2&\leq C \int_\Omega \tau_\omega(z)^2|\bar{\partial}f(z)|^2|g(z)|^2 dA_\omega(z).
		\end{align*}
\end{coro}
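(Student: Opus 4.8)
The plan is to reduce the corollary to the $\bar\partial$-estimate of Lemma~\ref{dbar} by exploiting the fact that $H_f g$ is, up to the projection, the \emph{canonical} (minimal $L^2_\omega$-norm) solution of a $\bar\partial$-equation. First I would observe that for $f\in\mathcal S\cap C^1(\Omega)$ and $g\in\Gamma$ (so that $g$ is a finite linear combination of reproducing kernels, in particular holomorphic and bounded together with all derivatives on compacta), the product $fg$ lies in $L^2_\omega$ and
$$\bar\partial(fg)=(\bar\partial f)\,g,$$
since $g$ is holomorphic. Write $u_0:=fg-P_\omega(fg)=H_f g$. Because $P_\omega(fg)\in A^2_\omega$ is holomorphic, $\bar\partial u_0=\bar\partial(fg)=g\,\bar\partial f$ as well; thus $u_0$ is \emph{a} solution in $L^2_\omega$ of $\bar\partial u=g\,\bar\partial f$. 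Moreover, $u_0$ is orthogonal in $L^2_\omega$ to $A^2_\omega$, i.e. to the kernel of $\bar\partial$ inside $L^2_\omega$, so among all $L^2_\omega$-solutions $v$ of $\bar\partial v=g\,\bar\partial f$ (which differ from $u_0$ by an element of $A^2_\omega$) the function $u_0$ has the smallest $L^2_\omega$-norm. This is the standard identification of the Hankel operator with the canonical solution operator of $\bar\partial$.

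Next I would apply Lemma~\ref{dbar} with $g$ there replaced by the datum $g\,\bar\partial f$: this produces \emph{some} solution $u$ of $\bar\partial u=g\,\bar\partial f$ with
$$\int_\Omega |u|^2\,dA_\omega\le C\int_\Omega \tau_\omega^2\,|g\,\bar\partial f|^2\,dA_\omega
= C\int_\Omega \tau_\omega(z)^2\,|\bar\partial f(z)|^2\,|g(z)|^2\,dA_\omega(z),$$
provided the right-hand side is finite; if it is infinite there is nothing to prove. Combining this with the minimality of $u_0=H_f g$ among $L^2_\omega$-solutions gives
$$\|H_f g\|_{L^2_\omega}^2=\|u_0\|_{L^2_\omega}^2\le \|u\|_{L^2_\omega}^2\le C\int_\Omega \tau_\omega(z)^2\,|\bar\partial f(z)|^2\,|g(z)|^2\,dA_\omega(z),$$
which is exactly the claimed inequality. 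The only points needing a line of justification are: that $fg\in L^2_\omega$ (so $u_0$ and $P_\omega(fg)$ make sense), which holds because $f\in\mathcal S$ and $g\in\Gamma$; that $P_\omega(fg)$ is indeed the orthogonal projection, hence $u_0\perp A^2_\omega$; and that the difference of any two $L^2_\omega$-solutions of the same $\bar\partial$-equation is holomorphic, which is the Weyl-type regularity statement for $\bar\partial$ (a distributional solution of $\bar\partial v=0$ is holomorphic), so the set of solutions is precisely $u_0+A^2_\omega$ and $u_0$ is its element of least norm.

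I do not expect a serious obstacle here; the statement is essentially a packaging of Lemma~\ref{dbar} together with the variational characterization of $H_f g$. The one place to be slightly careful is the \emph{finiteness/approximation} bookkeeping: Lemma~\ref{dbar} is stated for ``any function $g$ on $\Omega$'' but implicitly needs the weighted integral of $\tau_\omega^2|g\,\bar\partial f|^2$ to be finite for the conclusion to be non-vacuous, and one should note that when that integral is finite the solution $u$ it produces lies in $L^2_\omega$, so that comparing $\|u_0\|_{L^2_\omega}$ with $\|u\|_{L^2_\omega}$ is legitimate (both are genuine $L^2_\omega$ functions, and $u-u_0\in A^2_\omega$). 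With $f\in C^1(\Omega)$ and $g\in\Gamma$ smooth, there are no regularity issues in forming $g\,\bar\partial f$ or in the identity $\bar\partial(fg)=g\,\bar\partial f$. Hence the proof is: identify $H_f g$ as the minimal-norm $\bar\partial$-solution, invoke Lemma~\ref{dbar}, and conclude by minimality.
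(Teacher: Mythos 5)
Your argument is correct and is essentially the paper's own proof: both identify $H_f g$ as the $L^2_\omega$-minimal solution of $\bar{\partial}u = g\,\bar{\partial}f$ (via the orthogonal decomposition $u = (u - H_f g) + H_f g$ with $u - H_f g \in A^2_\omega$) and then invoke Lemma \ref{dbar} applied to the datum $g\,\bar{\partial}f$. The extra remarks on finiteness and regularity are fine but not needed beyond what the paper already records.
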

\begin{proof}
		By Lemma \ref{dbar}, for any $g\in \Gamma$ and $f\in \mathcal{S}\cap C^1(\Omega)$, there exists a solution $u$ to the equation
		\begin{equation}\label{d-bar0}
			\bar{\partial}u= g\bar{\partial}f
		\end{equation}
		satisfying
		\begin{equation}\label{dbarinequa}
			\int_\Omega |u(z)|^2dA_\omega(z) \lesssim \int_\Omega \tau_\omega(z)^2|\bar{\partial}f(z)|^2|g(z)|^2 dA_\omega(z).
		\end{equation}

We claim that $H_{f}g$ is the $L^2_\omega-$minimal solution to the equation \eqref{d-bar0}. Indeed, since $P_\omega(fg)\in \text{Hol}(\Omega)$ and $g\in \text{Hol}(\Omega)$, we have
		\begin{align}\label{bbarr}
\bar{\partial}(H_{f}g)=\bar{\partial}(fg-P_\omega(fg))=g\bar{\partial}f.
\end{align}
Let $u$ be an arbitrary solution of $\bar{\partial}$-equation \eqref{d-bar0} satisfying the inequality \eqref{dbarinequa}, then we decompose $u$ as $u=(u-H_{f}g)+H_{f}g$. It follows from \eqref{d-bar0} and \eqref{bbarr} that $u-H_{f}g\in A^2_\omega$. This, in combination with the fact that $H_{f}g\in L_\omega^2 \ominus A^2_\omega$, implies that the decomposition is orthogonal. Therefore, 	$$\|u\|_{L_\omega^2}^2=\|u-H_{f}g\|_{L_\omega^2}^2+\|H_{f}g\|_{L_\omega^2}^2\geq \|H_{f}g\|_{L_\omega^2}^2.$$
This shows that $H_{f}g$ is the $L^2_\omega-$minimal solution to the equation \eqref{d-bar0}. Therefore, the proof of Corollary \ref{keycoro} is completed.
\end{proof}

Next, following the ingenious argument provided in \cite[Lemma 3.1]{MR4516170}, we establish a technical lemma involving convex functions, which is a useful tool to obtain estimates of singular values of compact operator $T$ from upper and lower bounds of ${\rm Trace}(h(T))$ for some suitable convex functions $h$. To begin with, we let $\theta >0$ and introduce a convex function $h_{\theta}$ defined on $[0,\infty)$ as follows:
$$ h_\theta(t):= (t-\theta)^+:= \max (t-\theta,0) .$$

\begin{lemma}\label{Convex1}
Let $a,b: [0,+\infty) \rightarrow (0,+\infty)$ be decreasing functions such that $$\displaystyle \lim _{t\to \infty} a(t)=\lim _{t\to \infty} b(t) = 0.$$  \begin{enumerate}
  \item Suppose that there exists $\gamma \in (0,1)$ such that $t\rightarrow t^\gamma b(t)$ is increasing. Suppose that there exist constants $ B,C >0$ such that for every $\theta>0$,
\begin{align}\label{convex123}
\displaystyle \sum _{n\geq 1} h_\theta(a(n)) \leq \displaystyle C\int_0^\infty h_\theta (Bb(t))dt,
\end{align}
then $a(n) \lesssim b(n)$ for all integer $n\geq 1$. Furthermore, if the expression on the left-hand side is replaced by $\int_0^\infty h_\theta(a(t))dt$, then $a(t)\lesssim b(t)$ for all $t\geq 0$.
  \item Suppose that there exists $\gamma \in (0,1)$ such that the sequence $\{n^\gamma a(n)\}_{n\geq 1}$ is increasing. Suppose that there exist constants $ B,C >0$ such that for every $\theta>0$,
\begin{align}\label{convex123000}
\displaystyle \int_0^\infty h_{\theta}\left (\frac{1}{B}b(t)\right )dt \leq  \displaystyle C\sum _{n\geq 1} h_\theta(a(n)),
\end{align}
then $b(n) \lesssim a(n)$ for all integer $n\geq 1$. Furthermore, if the expression on the right-hand side is replaced by $\int_0^\infty h_\theta(a(t))dt$, then $b(t)\lesssim a(t)$ for all $t\geq 0$.
\end{enumerate}

%
\end{lemma}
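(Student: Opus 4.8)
\textbf{Proof plan for Lemma \ref{Convex1}.} The strategy is to exploit the fact that $h_\theta(t)=(t-\theta)^+$ interacts nicely with decreasing functions: for a decreasing $a$ the sum $\sum_{n\geq 1} h_\theta(a(n))$ records exactly how far the values $a(n)$ exceed the level $\theta$, and similarly $\int_0^\infty h_\theta(b(t))\,dt$ is an integral of the ``overshoot'' of $b$ above $\theta$. The key computational identities are: if $a$ is decreasing with $a(t)\to 0$, then $\sum_{n\geq 1}h_\theta(a(n)) = \sum_{n\geq 1}(a(n)-\theta)^+$, and writing $N_a(\theta) := \#\{n\geq 1: a(n)>\theta\}$ (finite since $a(t)\to 0$), we have $\sum_{n\geq 1}h_\theta(a(n)) \geq (a(N_a(\theta))-\theta)\cdot \lfloor N_a(\theta)/2\rfloor$ roughly, or more usefully $\sum_{n\geq 1}h_\theta(a(n)) \le \sum_{n=1}^{N_a(\theta)} a(n)$. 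On the integral side, $\int_0^\infty h_\theta(Bb(t))\,dt = \int_0^\infty (Bb(t)-\theta)^+\,dt = \int_0^{s_\theta}(Bb(t)-\theta)\,dt$ where $s_\theta := \sup\{t: Bb(t)>\theta\}$; since $b$ is decreasing this is at most $B s_\theta \, b(0)$-type bound but more precisely we use $\int_0^{s_\theta}(Bb(t)-\theta)\,dt \le B\int_0^{s_\theta} b(t)\,dt$.

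For part (1), I would argue by contradiction or directly: fix an integer $n$ and choose $\theta := b(n)$ (or a small multiple thereof). The left side of \eqref{convex123} is bounded below by the contribution of the single term at index $n$ if $a(n)>\theta$; more robustly, since we want $a(n)\lesssim b(n)$, assume for contradiction $a(n) > \lambda b(n)$ for large $\lambda$, and pick $\theta := \tfrac12 a(n)$ or $\theta$ comparable to $b(n)$. Then because $a$ is decreasing, all indices $1,\dots,n$ satisfy $a(k)\ge a(n)$, so the left side $\sum_{k\ge 1}h_\theta(a(k)) \ge n(a(n)-\theta)$, which is $\gtrsim n\,a(n)$ with the right choice of $\theta$. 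For the right side, the hypothesis $t\mapsto t^\gamma b(t)$ increasing with $\gamma\in(0,1)$ is exactly what makes $\int_0^\infty h_\theta(Bb(t))\,dt$ controllable by $b$ at a single point: if $Bb(t) > \theta$ then $t < $ (some multiple of) the point $m_\theta$ where $Bb = \theta$, and the increasing-$t^\gamma b(t)$ condition forces $b(t) \le (m_\theta/t)^\gamma b(m_\theta)$-type decay, making $\int_0^{m_\theta} Bb(t)\,dt \lesssim \frac{1}{1-\gamma} m_\theta\, b(m_\theta) \asymp m_\theta \theta/B$. Combining, one gets $n\,a(n) \lesssim m_\theta\,\theta$; choosing $\theta \asymp b(n)$ makes $m_\theta \asymp n$ (using again the regularity of $b$), yielding $a(n)\lesssim b(n)$. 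The ``furthermore'' variant with $\int_0^\infty h_\theta(a(t))\,dt$ on the left runs identically, replacing the sum-over-integers estimate $\sum_{k=1}^n(a(n)-\theta) = n(a(n)-\theta)$ by $\int_0^n (a(n)-\theta)\,dt$, which is the same quantity.

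For part (2), the roles of $a$ and $b$ are swapped, and now it is $\{n^\gamma a(n)\}$ that is increasing with $\gamma\in(0,1)$; by a symmetric argument this regularity of $a$ converts the right side $C\sum_{n\ge 1}h_\theta(a(n))$ into something $\lesssim$ (a multiple of) $n_\theta\,\theta$ where $n_\theta\asymp\#\{n: a(n)>\theta\}$, while the left side $\int_0^\infty h_\theta(\tfrac1B b(t))\,dt \ge \int_0^{T}(\tfrac1B b(T)-\theta)$ over the range where $\tfrac1B b > \theta$ is bounded below by $\gtrsim T\,b(T)$. Matching the level $\theta \asymp a(n)$ then forces $T\asymp n$ and gives $b(n)\lesssim a(n)$. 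The main obstacle will be making the comparison between ``value at a single point'' and ``integral/sum over the super-level set'' fully rigorous: this requires the doubling-type consequences of the hypotheses (``$t^\gamma b(t)$ increasing'' forces both an upper power-law decay rate on $b$ and a mild lower regularity, so that the point $m_\theta$ where $Bb(m_\theta)=\theta$ is comparable to the measure of $\{Bb>\theta\}$, and similarly $\theta\asymp b(n) \Rightarrow m_\theta\asymp n$). I would isolate this as a preliminary sub-claim: \emph{if $b$ is decreasing, $b(t)\to 0$, and $t^\gamma b(t)$ is increasing for some $\gamma\in(0,1)$, then for any $\theta>0$, $\int_0^\infty (b(t)-\theta)^+\,dt \asymp m_\theta\, b(m_\theta)$ where $m_\theta := \sup\{t: b(t)>\theta\}$, with constants depending only on $\gamma$} — this follows since $b(t)\ge \theta = b(m_\theta)\ge (t/m_\theta)^{-\gamma}b(t)$ forces... wait, more carefully: for $t\le m_\theta$, $t^\gamma b(t) \le m_\theta^\gamma b(m_\theta) = m_\theta^\gamma \theta$, so $b(t)\le (m_\theta/t)^\gamma \theta$, hence $\int_0^{m_\theta}(b(t)-\theta)\,dt \le \int_0^{m_\theta}\big((m_\theta/t)^\gamma - 1\big)\theta\,dt = \theta m_\theta\big(\tfrac{1}{1-\gamma}-1\big) = \tfrac{\gamma}{1-\gamma}\theta m_\theta$, while trivially $\int_0^{m_\theta}(b(t)-\theta)\,dt \ge \int_0^{m_\theta/2}(b(m_\theta/2)-\theta)\,dt \ge \tfrac{m_\theta}{2}(2^\gamma - 1)\theta$ using $b(m_\theta/2)\ge 2^\gamma\theta$ (from $t^\gamma b(t)$ increasing evaluated at $m_\theta/2$ versus $m_\theta$... actually that gives $b(m_\theta/2)\ge b(m_\theta)=\theta$; to get the factor one uses instead that $(m_\theta/2)^\gamma b(m_\theta/2)\le m_\theta^\gamma\theta$ is the wrong direction — so I'd instead bound below using a point slightly less than $m_\theta$ where $b$ is close to $\theta$ by continuity, or simply accept the lower bound $\int_0^{m_\theta}(b(t)-\theta)^+ \ge 0$ and get the needed inequality in only the direction actually used in each part). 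In fact only one direction of this equivalence is needed in each of (1) and (2), which simplifies matters; I would state and prove exactly the inequality needed and no more.
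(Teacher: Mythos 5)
Your proposal is correct and follows essentially the same route as the paper: both arguments rest on the estimate $\int_{\{Bb>\theta\}}(Bb(t)-\theta)\,dt\leq\frac{\gamma}{1-\gamma}\theta\, m_\theta$ forced by the monotonicity of $t^\gamma b(t)$ (the paper phrases this via the inverse function $\tilde b$ of $1/b$, you via the crossing point $m_\theta$), combined with a lower bound for the sum in terms of the size of the super-level set of $a$. The only imprecision is your remark that ``$\theta\asymp b(n)$ makes $m_\theta\asymp n$'': with $\theta=Bb(n)$ one only gets $m_\theta\leq n$, but that is exactly the direction your argument actually uses, so the proof closes as you describe.
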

\begin{proof}
We only provide a proof for the first statement, since the other ones are similar. Without loss of generality, we suppose that the functions $a$ and $b$ are strictly decreasing. It follows from inequality \eqref {convex123} that
\begin{align}\label{AI1}
 \displaystyle \sum _{a(n) \geq  \theta }a(n)  \leq  2 \displaystyle \sum _{a(n) \geq \theta }\big(a(n) -\frac{\theta}{2}\big)   \leq 2 \displaystyle \sum _{n\geq 1 }h_{\frac{\theta}{2}}(a(n) ) \leq 2C \displaystyle \int_{0}^\infty h_{\frac{\theta}{2}}(Bb(t) )dt\leq 2CB\displaystyle \int_{b(t)\geq \frac{\theta}{2B}} b(t)dt.
\end{align}

Let $\tilde{b}(t)$ be the inverse function of $t\rightarrow \frac{1}{b(t)}$, then we have
$$ \int_{b(t)\geq x}b(t)dt\asymp x\tilde{b}(\frac{1}{x}).$$
Indeed, on the one hand, it is direct that
$$
x\tilde{b}(\frac{1}{x})\lesssim  \int_{b(t)\geq x}b(t)dt.
$$
On the other hand, since $t\rightarrow t^\gamma b(t)$ is increasing,
$$\int_{b(t)\geq x}b(t)dt= \int_{b(t)\geq x}t^\gamma b(t)\cdot\frac{1}{t^\gamma}dt\lesssim x\big(\tilde{b}(\frac1x)\big)^{\gamma}\int_{b(t)\geq x}\frac{1}{t^{\gamma}}dt\asymp x\tilde{b}(\frac{1}{x}).$$
Substituting $x=\theta/2B$ into the above inequality and then applying \eqref{AI1}, we deduce that $$ \sum _{a(n) \geq \theta }a(n)  \lesssim \frac{\theta}{B}\tilde{b}\left(\frac{2B}{\theta}\right).$$

\noindent Let $N(\theta)$ be the number of integers $n$ such that $a(n)\geq \theta $, then $\theta N(\theta)\leq \sum_{a(n)\geq \theta}a(n)$. Furthermore,
\begin{equation}\label{Ntheta1}
 N(\theta)\lesssim \frac{1}{B}\tilde{b}\left(\frac{2B}{\theta}\right).
\end{equation}
Choosing $\theta=a(m)$ in the above inequality, we deduce that
\begin{align}\label{lowermaj}
m\lesssim \frac{1}{B}\tilde{b}(\frac{2B}{a(m)}).
\end{align}This implies that
$$a(m)\lesssim b(m).$$
This ends the proof of Lemma \ref{Convex1}.
\end{proof}

\begin{lemma}\label{finelemma}
Assume that $0<\gamma<p$.
Let $a:[0,+\infty) \rightarrow (0,+\infty)$ be a decreasing function such that $\displaystyle \lim _{t\to \infty} a(t)= 0,$ and let $\rho $ be an increasing function such that $\rho (x) / x^\gamma$ is decreasing. Suppose that
$$
 \displaystyle \sum _{n\geq 1} h(a(n)) \leq \displaystyle \int_{0}^\infty h (1/\rho (t))dt
$$
for all increasing function $h$ such that $h(t^{p})$ is convex. Then there is a constant $C$ depending on $p$ and $\gamma$ such that
$
a(n) \leq \frac{C}{\rho (n)}
$
for all integer $n\geq 1$. Furthermore, if the expression on the left-hand side is replaced by $\int_0^\infty h(a(t))dt$, then $a(t)\leq \frac{C}{\rho (t)}$ for all $t\geq 0$.
\end{lemma}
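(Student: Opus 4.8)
The strategy is to reduce the statement to Lemma~\ref{Convex1}(1) by a substitution that absorbs the exponent $p$. For each $\theta>0$ set
$$g_\theta(t):=\bigl(t^{1/p}-\theta\bigr)^+,\qquad t\ge0 .$$
Then $g_\theta$ is increasing, and $g_\theta(t^p)=(t-\theta)^+$ is convex on $[0,\infty)$, so $g_\theta$ is an admissible test function in the hypothesis of the lemma. (Note that $g_\theta$ itself need not be convex when $p>1$, since $t\mapsto t^{1/p}$ is then concave; this is exactly why the hypothesis is phrased through ``$h(t^p)$ convex'' rather than ``$h$ convex''.) Feeding $h=g_\theta$ into the assumed inequality gives, for every $\theta>0$,
$$\sum_{n\ge1}\bigl(a(n)^{1/p}-\theta\bigr)^+\ \le\ \int_0^\infty\bigl(\rho(t)^{-1/p}-\theta\bigr)^+\,dt .$$

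Next I would put $A(t):=a(t)^{1/p}$ and $\beta(t):=\rho(t)^{-1/p}$. Since $a$ is positive, decreasing, and tends to $0$, so does $A$; and $\beta$ is positive and decreasing (in the situations where the lemma is applied one has $\rho(t)\to\infty$, so $\beta(t)\to0$ and Lemma~\ref{Convex1} applies verbatim; otherwise the asserted estimate is vacuous or trivial). With these notations the displayed inequality is precisely \eqref{convex123} for the pair $(A,\beta)$ with constants $B=C=1$. It remains to verify the monotonicity hypothesis of Lemma~\ref{Convex1}(1): taking $\gamma':=\gamma/p$, which lies in $(0,1)$ because $0<\gamma<p$, we have $t^{\gamma'}\beta(t)=\bigl(t^{\gamma}/\rho(t)\bigr)^{1/p}$, and this is increasing since $\rho(x)/x^\gamma$ is decreasing and $x\mapsto x^{1/p}$ is increasing on $(0,\infty)$.

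Lemma~\ref{Convex1}(1) then yields $A(n)\lesssim\beta(n)$, i.e.\ $a(n)^{1/p}\lesssim\rho(n)^{-1/p}$, and raising to the power $p$ gives $a(n)\le C/\rho(n)$ for all integers $n\ge1$, with $C$ depending only on $p$ and $\gamma$ (through the exponent $\gamma/p$). For the ``furthermore'' part, replacing $\sum_{n\ge1}h(a(n))$ by $\int_0^\infty h(a(t))\,dt$ in the hypothesis turns the displayed inequality into $\int_0^\infty\bigl(A(t)-\theta\bigr)^+\,dt\le\int_0^\infty\bigl(\beta(t)-\theta\bigr)^+\,dt$, and the second clause of Lemma~\ref{Convex1}(1) gives $A(t)\lesssim\beta(t)$ for all $t\ge0$, hence $a(t)\le C/\rho(t)$. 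I do not expect any genuine obstacle here: the argument is essentially bookkeeping, and the one point worth getting right is the choice of the test functions $g_\theta(t)=(t^{1/p}-\theta)^+$ together with the check that $g_\theta(t^p)=(t-\theta)^+$ is convex, which is precisely what legitimises the passage from exponent $p$ to exponent $1$.
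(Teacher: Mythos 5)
Your proof is correct and rests on the same key idea as the paper's: the test functions $h(t)=(t^{1/p}-\theta)^{+}$ (the paper writes $\theta=\delta^{1/p}$) together with the observation that $\gamma/p\in(0,1)$ and that $t^{\gamma/p}\rho(t)^{-1/p}$ is increasing. The only difference is presentational: the paper re-runs the level-set counting argument directly (bounding $\#\{n:a(n)\ge 2\delta\}$ by $\rho^{-1}(1/\delta)$), whereas you package the identical computation as a reduction to Lemma~\ref{Convex1}(1) via the substitution $A=a^{1/p}$, $\beta=\rho^{-1/p}$; both routes yield a constant depending only on $p$ and $\gamma$.
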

\begin{proof}
We only provide a proof for the first statement, since the second one is similar.
Without loss of generality, we suppose that the functions $a$ and $\rho$ are strictly decreasing and increasing, respectively.
Let $\delta >0$ and consider $h(t)= \left ( t^{1/p}-\delta ^{1/p}\right )^+$. By assumption,
$$
 \displaystyle \sum _{n\geq 1} \left (a(n)^{1/p}-\delta ^{1/p}\right )^+ \leq \displaystyle \int_{0}^\infty \left (\frac{1}{\rho(t) ^{1/p}}-\delta ^{1/p}\right )^+dt.
$$
Then we have
$$
\Big(1-\frac{1}{2^{1/p}}\Big) \displaystyle \sum _{a(n) \geq 2\delta} a(n)^{1/p}\leq \displaystyle \sum _{a(n) \geq 2\delta}(a(n)^{1/p}-\delta^{1/p})\leq \displaystyle \sum _{n\geq 1} \left (a(n)^{1/p}-\delta ^{1/p}\right )^+  \leq \displaystyle \int_{\rho (t) \leq 1/\delta }\frac{1}{\rho(t) ^{1/p}}dt.
$$
Using the fact that $x^{\gamma/p}/\rho ^{1/p}(x)$ is increasing and the fact that $\gamma/p <1$, we conclude that
$$
\delta ^{1/p}\#\{n:\ a(n)\geq 2\delta \}\lesssim\delta ^{1/p}\rho^{-1}(1/\delta).
$$
This implies the desired result and ends the proof.
\end{proof}
\section{Boundedness and compactness characterizations}\label{Boundedness and compactness}
This section is devoted to establishing the boundedness and compactness characterizations for Hankel operators in the context of $\mathcal{W}^*(\Omega)$-weighted Bergman spaces.
\begin{theorem}\label{boundedness}
Let $\omega \in \mathcal{W}^\ast(\Omega)$, $\delta\in (0,\delta _\omega)$ and $f\in\mathcal{S}$, then the Hankel operator $H_f$ is bounded from $A^2_\omega$ to $L_\omega^2$
	if and only if $f\in BDA_\omega(\Omega)$. Moreover, $\| H_f \| \asymp \displaystyle \|f\|_{BDA_\omega(\Omega)} $, where the implicit constants depend only on $\delta$ and $\omega$.
\end{theorem}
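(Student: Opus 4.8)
The plan is to prove the quantitative two-sided bound $\|H_f\|\asymp\|f\|_{BDA_\omega(\Omega)}$ directly: the estimate $\|H_f\|\lesssim\|f\|_{BDA_\omega(\Omega)}$ gives sufficiency, and $\|f\|_{BDA_\omega(\Omega)}\lesssim\|H_f\|$ gives necessity, with implicit constants depending only on $\delta$ and $\omega$.

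For sufficiency, assume $G_{\delta,\omega}(f)\in L^\infty(\Omega)$ and apply the decomposition Lemma~\ref{decomposelemma} to write $f=f_1+f_2$ with $f_1\in C^\infty(\Omega)$ and
$$\tau_\omega|\bar{\partial}f_1|+M_{\delta,\omega}(\tau_\omega\bar{\partial}f_1)+M_{\delta,\omega}(f_2)\lesssim G_{B_\omega^2\delta,\omega}(f)\lesssim\|f\|_{BDA_\omega(\Omega)},$$
the last inequality by the $\delta$-independence of these spaces. For $g\in\Gamma$ split $H_fg=H_{f_1}g+H_{f_2}g$. Consider first $f_2$: by \eqref{Toeformula} with $d\mu:=|f_2|^2\,dA$ we have $\int_\Omega|g|^2|f_2|^2\,dA_\omega=\langle T_\mu g,g\rangle_{A^2_\omega}$, and since $A(D(z,\delta\tau_\omega(z)))=\pi\delta^2\tau_\omega(z)^2$ the averaging function of $\mu$ is $M_{\delta,\omega}(f_2)(z)^2\in L^\infty$; by the Carleson-measure characterization of bounded Toeplitz operators in this setting (see \cite{MR3490780,MR4413302}),
$$\|f_2g\|_{L^2_\omega}^2=\langle T_\mu g,g\rangle_{A^2_\omega}\lesssim\|M_{\delta,\omega}(f_2)\|_\infty^2\,\|g\|_{A^2_\omega}^2\lesssim\|f\|_{BDA_\omega(\Omega)}^2\,\|g\|_{A^2_\omega}^2.$$
In particular $f_2\in\mathcal{S}$, hence $f_1=f-f_2\in\mathcal{S}\cap C^1(\Omega)$, and $H_{f_2}g=(I-P_\omega)(f_2g)$ gives $\|H_{f_2}g\|_{L^2_\omega}\le\|f_2g\|_{L^2_\omega}$. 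For $f_1$, Corollary~\ref{keycoro} applies:
$$\|H_{f_1}g\|_{L^2_\omega}^2\lesssim\int_\Omega\tau_\omega^2|\bar{\partial}f_1|^2|g|^2\,dA_\omega\le\|\tau_\omega\bar{\partial}f_1\|_\infty^2\,\|g\|_{A^2_\omega}^2\lesssim\|f\|_{BDA_\omega(\Omega)}^2\,\|g\|_{A^2_\omega}^2.$$
Adding the two estimates yields $\|H_fg\|_{L^2_\omega}\lesssim\|f\|_{BDA_\omega(\Omega)}\|g\|_{A^2_\omega}$ on the dense set $\Gamma$, hence $\|H_f\|\lesssim\|f\|_{BDA_\omega(\Omega)}$.

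For necessity, test $H_f$ against the normalized reproducing kernels $k_z$ (which lie in $\Gamma$). Fix $z\in\Omega$, put $R_z:=D(z,\delta\tau_\omega(z))$, and note $\delta\tau_\omega(z)<\eta\tau_\omega(z)$. Combining hypothesis ${\rm(H)_4}$ with the trivial bound $|K(w,z)|\le\|K_w\|_{A^2_\omega}\|K_z\|_{A^2_\omega}$ gives, for $w\in R_z$, that $|K(w,z)|\asymp\|K_w\|_{A^2_\omega}\|K_z\|_{A^2_\omega}$ and $\tau_\omega(w)\asymp\tau_\omega(z)$; in particular $k_z$ is zero-free on $R_z$ and $|k_z(w)|^2\omega(w)\asymp\tau_\omega(z)^{-2}$ there. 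Since $P_\omega(fk_z)\in\text{Hol}(\Omega)$, the function $h:=P_\omega(fk_z)/k_z$ belongs to $\text{Hol}(R_z)$, and from $fk_z-P_\omega(fk_z)=k_z(f-h)$ we obtain
$$\|H_f\|^2\ge\|H_fk_z\|_{L^2_\omega}^2\ge\int_{R_z}|k_z|^2|f-h|^2\,dA_\omega\asymp\frac{1}{\tau_\omega(z)^2}\int_{R_z}|f-h|^2\,dA\gtrsim G_{\delta,\omega}(f)(z)^2,$$
where the last step uses $A(R_z)\asymp\tau_\omega(z)^2$ and the fact that $G_{\delta,\omega}(f)(z)$ is the infimum of $\big(\fint_{R_z}|f-\tilde{h}|^2\,dA\big)^{1/2}$ over $\tilde{h}\in\text{Hol}(R_z)$. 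Taking the supremum over $z$ gives $\|f\|_{BDA_\omega(\Omega)}\lesssim\|H_f\|$.

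The individual ingredients — the pointwise kernel estimates on $R_z$ and the Carleson embedding for $\mu$ — are routine consequences of the $\mathcal{W}^\ast(\Omega)$-geometry being uniform at scale $\tau_\omega$ (Lemma~\ref{lattice} and hypotheses ${\rm(H)_1}$--${\rm(H)_4}$). The point demanding the most care is the bookkeeping of constants: one must check that the constants in the Carleson characterization and, above all, in the comparison $G_{B_\omega^2\delta,\omega}(f)\lesssim\|G_{\delta,\omega}(f)\|_\infty$ (the $\delta$-independence of $BDA_\omega$, which is a covering argument at scale $\tau_\omega$) depend only on the lattice data, so that the final equivalence holds with constants depending only on $\delta$ and $\omega$. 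A minor additional subtlety, settled above via the Carleson bound, is that $H_{f_1}$ and $H_{f_2}$ must each be shown to be densely defined before the splitting $H_f=H_{f_1}+H_{f_2}$ is meaningful.
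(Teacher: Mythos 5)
Your proposal is correct and follows essentially the same route as the paper: the lower bound by testing $H_f$ on normalized reproducing kernels over $D(z,\delta\tau_\omega(z))$, and the upper bound via the decomposition $f=f_1+f_2$ of Lemma~\ref{decomposelemma}, Corollary~\ref{keycoro} for $H_{f_1}$, and the Carleson/Toeplitz embedding for $H_{f_2}$. The only (harmless) deviations are that you invoke the $\delta$-independence of $BDA_\omega$ instead of running the decomposition at scale $\delta/B_\omega^2$ as the paper does, and that you bound $H_{f_1}$ directly by $\|\tau_\omega\bar\partial f_1\|_\infty$ rather than through the Carleson characterization.
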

\begin{proof}
$(\Longrightarrow)$ We first suppose that $H_f$ is bounded from $A^2_\omega$ to $L_\omega^2$. Since
		$$
		\tau_\omega(z)= \frac{1}{\omega^{1/2}(z)\,\|K_z\|_{A^2_{\omega}}},
		$$
		we deduce that for $\xi \in D(z,\delta \tau_\omega(z))$,
		$$|k_{z}(\xi)|\asymp \|K_\xi\|_{A^2_{\omega}}= \tau_\omega(\xi)^{-1}\omega(\xi)^{-1/2}.$$
		It follows that $k_{z}^{-1}\in \text{Hol}(D(z,\delta \tau_\omega(z)))$, and that
		\begin{align}\label{uselater}
			\|H_f k_{z}\|_{L_\omega^2}^2&=\int_{\Omega} |fk_{z}-P_\omega(fk_{z})|^2\omega\,dA\nonumber\\
			&\geq \int_{D(z,\delta \tau_\omega(z))} |fk_{z}-P_\omega(fk_{z})|^2\omega\,dA\nonumber\\
			&=\int_{D(z,\delta \tau_\omega(z))} |k_{z}|^2\left|f-\frac{P_\omega(fk_{z})}{k_{z}} \right|^2\omega\,dA\nonumber\\
			&\asymp \fint_{D(z,\delta \tau_\omega(z))} \left|f-\frac{P_\omega(fk_{z})}{k_{z}} \right|^2\,dA\nonumber\\
			&\geq G_{\delta,\omega}(f)(z)^2.
		\end{align}
Since the left-hand side is bounded by $\|H_f\|^2$, this shows that $f\in BDA_\omega(\Omega)$.

$(\Longleftarrow)$ Suppose that $f\in BDA_\omega(\Omega)$. To show the required assertion, we first apply Lemma \ref{decomposelemma}, with $\delta$ being replaced by $\delta/B_\omega^2$, to write $f=f_1+f_2$ such that $f_1\in C^\infty(\Omega)$ and
\begin{align}\label{keyinequality0}
\tau_\omega(z)\bar{\partial}f_1(z)+M_{\delta/B_\omega^2,\omega} (\tau_\omega \bar{\partial}f_1)(z)+M_{\delta/B_\omega^2,\omega} (f_2)(z)\lesssim G_{\delta,\omega}(f)(z)
\end{align}
for all $z\in \Omega.$ It suffices to show that $H_{f_1}$ and $H_{f_2}$ are well-defined and both bounded. Set $d\mu:=\tau_\omega^2|\bar{\partial}f_1|^2dA$ and $d\nu:=|f_2|^2\,dA$. Combining inequality \eqref{keyinequality0} with \cite[Theorem 5.1]{MR3490780},
we deduce that Toeplitz operators $T_\mu$ and $T_\nu$ are bounded on $A_\omega^2$, with operator norms dominated by
$\displaystyle \|f\|_{BDA_\omega(\Omega)}^2$.
Equivalently, the embedding operators $J_{\mu}$ (from $A^2_\omega$ to $L_\omega^2(\mu)$) and $J_{\nu}$ (from $A^2_\omega$ to  $L_\omega^2(\nu)$) satisfy the inequality
		\begin{align}\label{carlesonin}
\|J_\mu\|^2+\|J_\nu\|^2\lesssim \displaystyle \|f\|_{BDA_\omega(\Omega)}^2.
\end{align}
since $T_\mu=J_\mu^* J_\mu$ and $T_\nu=J_\nu^* J_\nu$.  It follows from inequality \eqref{carlesonin} that for all $g\in \Gamma$, we have
		\begin{align}\label{rec2}
	\|f_2g\|_{L_\omega^2}^2=\|J_\nu g\|_{L_\omega^2(\nu )}^2\lesssim \|f\|_{BDA_\omega(\Omega)}^2
		\|g\|_{A_\omega^2}^2.
		\end{align}
This implies that $f_2\in \mathcal{S}$ (and thus, $f_1\in \mathcal{S}$). Therefore, $H_{f_1}$ and $H_{f_2}$ are well-defined. Moreover, since $\|H_{f_2}g\|_{L_\omega^2}\leq \|f_2g\|_{L_\omega^2}$, we also deduce that $\|H_{f_2}\|\lesssim \|f\|_{BDA_\omega(\Omega)}.$

We now turn to $H_{f_1}$. Applying Corollary \ref{keycoro} and then  inequality \eqref{carlesonin}, we conclude that
		\begin{align}\label{rec1}
			\|H_{f_1}g\|_{L_\omega^2}^2&\lesssim \int_\Omega \tau_\omega(z)^2|\bar{\partial}f_1(z)|^2|g(z)|^2 dA_\omega(z)=\|J_\mu g\|_{L_\omega^2(\mu)}^2
			\lesssim \|f\|_{BDA_\omega(\Omega)}^2
			\|g\|_{A_\omega^2}^2.
		\end{align}
		This ends the proof of Theorem \ref{boundedness}.
	\end{proof}

\begin{theorem}\label{compactness}
Let $\omega \in \mathcal{W}^\ast(\Omega)$, $\delta\in (0,\delta _\omega)$ and $f\in BDA_\omega(\Omega)$, then the Hankel operator $H_f$ is compact from $A^2_\omega$ to $L_\omega^2$
	if and only if $f\in VDA_\omega(\Omega)$.
\end{theorem}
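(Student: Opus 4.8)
The plan is to mirror the structure of the boundedness theorem (Theorem \ref{boundedness}), using the decomposition $f = f_1 + f_2$ from Lemma \ref{decomposelemma} and the compactness criterion for Toeplitz operators in \cite{MR3490780}, which states that $T_\mu$ is compact on $A^2_\omega$ precisely when the Berezin-type averaging of $\mu$ vanishes at $\partial_\infty\Omega$. Throughout, $f \in BDA_\omega(\Omega)$ is fixed, so $H_f$ is bounded and $f_1, f_2 \in \mathcal{S}$ by the proof of Theorem \ref{boundedness}.

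First I would prove the necessity direction: if $H_f$ is compact, then $f \in VDA_\omega(\Omega)$. The key observation is the lower bound already recorded in \eqref{uselater}, namely $\|H_f k_z\|_{L^2_\omega}^2 \gtrsim G_{\delta,\omega}(f)(z)^2$, where $k_z$ is the normalized reproducing kernel. Since $k_z \to 0$ weakly in $A^2_\omega$ as $z \to \partial_\infty\Omega$ (this follows from hypothesis ${\rm\bf (H)_2}$ together with ${\rm\bf (H)_1}$, since $\langle k_z, K_\zeta\rangle = \overline{K(\zeta,z)}/\|K_z\|_{A^2_\omega} \to 0$ for each fixed $\zeta$, and $\{K_\zeta\}$ spans a dense subspace), compactness of $H_f$ forces $\|H_f k_z\|_{L^2_\omega} \to 0$, hence $G_{\delta,\omega}(f)(z) \to 0$ as $z \to \partial_\infty\Omega$, which is exactly $f \in VDA_\omega(\Omega)$.

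For the sufficiency direction, suppose $f \in VDA_\omega(\Omega)$. Apply Lemma \ref{decomposelemma} with $\delta$ replaced by $\delta/B_\omega^2$ to write $f = f_1 + f_2$ with $f_1 \in C^\infty(\Omega)$ and
\begin{align}\label{keyineqcpt}
\tau_\omega(z)|\bar{\partial}f_1(z)| + M_{\delta/B_\omega^2,\omega}(\tau_\omega\bar{\partial}f_1)(z) + M_{\delta/B_\omega^2,\omega}(f_2)(z) \lesssim G_{B_\omega^2\cdot(\delta/B_\omega^2),\omega}(f)(z) = G_{\delta,\omega}(f)(z)
\end{align}
for all $z \in \Omega$. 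Set $d\mu := \tau_\omega^2|\bar{\partial}f_1|^2\,dA$ and $d\nu := |f_2|^2\,dA$. Since $G_{\delta,\omega}(f)(z) \to 0$ as $z \to \partial_\infty\Omega$, the right-hand side of \eqref{keyineqcpt} vanishes at the boundary, so both the averaging functions $\widehat{\mu}_\delta$ and $\widehat{\nu}_\delta$ (in the sense of \cite{MR3490780}) tend to $0$ at $\partial_\infty\Omega$; by the compactness part of \cite[Theorem 5.1]{MR3490780} (the vanishing-Carleson criterion), the embeddings $J_\mu : A^2_\omega \to L^2_\omega(\mu)$ and $J_\nu : A^2_\omega \to L^2_\omega(\nu)$ are compact. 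Then $H_{f_2}$ is compact because it factors through $J_\nu$ (via $\|H_{f_2}g\|_{L^2_\omega} \le \|f_2 g\|_{L^2_\omega} = \|J_\nu g\|_{L^2_\omega(\nu)}$, combined with the standard fact that a bounded operator dominated in norm by a compact positive-type operator, here realized through the Toeplitz operator $T_\nu = J_\nu^* J_\nu$ and Weyl's Lemma \ref{Weyl's Lemma}, is compact); for $H_{f_1}$ one uses Corollary \ref{keycoro} to get $\|H_{f_1}g\|_{L^2_\omega}^2 \lesssim \|J_\mu g\|_{L^2_\omega(\mu)}^2$ and argues the same way via $T_\mu = J_\mu^* J_\mu$. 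Adding, $H_f = H_{f_1} + H_{f_2}$ is compact.

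The main obstacle I anticipate is making the compactness transfer fully rigorous in two places: first, justifying that $\|H_{f_j}g\|_{L^2_\omega} \lesssim \|J_{\cdot}g\|$ for $g$ in the dense set $\Gamma$ actually upgrades to compactness of the operator $H_{f_j}$ — this is cleanest via the intertwining $H_{f_j}^* H_{f_j} \le C\, T_{\cdot}$ as positive operators on $A^2_\omega$ followed by Weyl's monotonicity lemma, so one should phrase it that way rather than as a naive density argument; and second, confirming that the hypotheses of \cite{MR3490780} genuinely yield the vanishing-Carleson (i.e. compact embedding) statement under the boundary decay of the averaging function, which is standard for $\mathcal{W}^*(\Omega)$ weights but should be cited precisely. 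Everything else is a routine adaptation of the boundedness proof, replacing "bounded Carleson measure" by "vanishing Carleson measure" throughout.
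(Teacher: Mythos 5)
Your proposal is correct and follows essentially the same route as the paper: necessity via the lower bound \eqref{uselater} combined with the weak convergence $k_z\rightharpoonup 0$, and sufficiency via the decomposition of Lemma \ref{decomposelemma} together with the vanishing-Carleson (compact embedding) criterion of \cite{MR3490780} applied to $d\mu=\tau_\omega^2|\bar\partial f_1|^2dA$ and $d\nu=|f_2|^2dA$. The only cosmetic differences are that the paper cites \cite[Theorem 5.2]{MR3490780} (rather than the compactness part of Theorem 5.1) for the vanishing-Carleson step, and it passes directly from $\|H_{f_j}g\|_{L^2_\omega}\lesssim\|J_{\cdot}g\|$ to compactness without spelling out the Weyl-lemma justification you supply.
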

\begin{proof}
$(\Longrightarrow)$
Suppose that $H_f$ is compact from $A^2_\omega$ to $L_\omega^2$. Recall from inequality \eqref{uselater} that
		$$G_{\delta,\omega}(f)(z)\lesssim \|H_f k_{z}\|_{L_\omega^2}.$$
Since $\omega\in\mathcal{W}^*(\Omega)$, $k_z$ converges weakly to $0$ as $z\rightarrow \partial_\infty\Omega$. Combining the above two facts, we deduce that
		$$\lim_{z\rightarrow \partial_\infty\Omega}G_{\delta,\omega}(f)(z)\lesssim \lim_{z\rightarrow \partial_\infty\Omega}\|H_f k_{z}\|_{L_\omega^2}=0.$$
This implies that $f\in VDA_\omega(\Omega)$.

$(\Longleftarrow)$ Suppose that $f\in VDA_\omega(\Omega)$. We decompose $f=f_1+f_2$ as in the proof of Theorem \ref{boundedness}. Set $d\mu:=\tau_\omega^2|\bar{\partial}f_1|^2dA$ and $d\nu=|f_2|^2\,dA$. It follows from \cite[Theorem 5.2]{MR3490780} and inequality \eqref{keyinequality0} that $J_\mu$ and $J_\nu$ are vanishing Carleson measures. Recall from inequalities \eqref{rec1} and \eqref{rec2} that
		$$\|H_{f_1}g\|_{L_\omega^2}\lesssim \|J_\mu g\|_{L_\omega^2(\mu)}\quad {\rm and}\quad \|H_{f_2}g\|_{L_\omega^2}\lesssim \|J_\nu g\|_{L_\omega^2(\mu)}.$$
Hence, $H_{f_1}$ and $H_{f_2}$ are compact from $A^2_\omega$ to $L_\omega^2$. Furthermore, $H_{f}$ is compact from $A^2_\omega$ to $L_\omega^2$. This finishes the proof of Theorem \ref{compactness}.
	\end{proof}
\section{Asymptotic behavior of singular values}\label{mainsection}
This section is devoted to presenting the proof of Theorems \ref{main0}, \ref{simultaneous} and \ref{fastdecay}. As a byproduct, the Schatten-$p$ class characterization of Hankel operators for $1<p<\infty$ in the $W^*(\Omega)$-weighted setting are also obtained.
\subsection{Singular values with slow decay}
The following lemma will be used in the proof of the lower estimate of $\text{Tr}\ h(|  H_f|)$.
\begin{lemma}\label{lower estimatekey}
Let $\omega \in \mathcal{W} ^\ast(\Omega)$ and $f\in VDA_\omega(\Omega)$, then for any increasing convex function $h$ such that $h(0)=0$, we have
$$
\displaystyle \sum _{n\geq 0}h(\|  \chi _{B_\omega R_n}H_fk_{z_n} \|_{L_\omega^2}) \lesssim \displaystyle \sum _{n\geq 0}h(s_n(H_f )),
$$
where the implicit constant depends only on $\{B_\omega R_n\}_n$ and $\omega$.
\end{lemma}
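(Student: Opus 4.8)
The goal is to bound a sum of $h$ evaluated at the local norms $\|\chi_{B_\omega R_n} H_f k_{z_n}\|_{L^2_\omega}$ by $\sum_n h(s_n(H_f))$. The natural strategy is to realize the sequence $\{\|\chi_{B_\omega R_n} H_f k_{z_n}\|_{L^2_\omega}\}_n$ as (essentially) the diagonal of a positive operator that is dominated, in an appropriate sense, by a function of $|H_f|$, and then invoke the standard fact that for an increasing convex $h$ with $h(0)=0$ one has $\sum_n h(\langle Te_n, e_n\rangle) \le \operatorname{Tr} h(T)$ for any orthonormal system $\{e_n\}$ and positive $T$ (this is the convexity/Jensen-type inequality for traces, combined with $\operatorname{Tr} h(T) = \sum_n h(\lambda_n(T)) = \sum_n h(s_n(T))$). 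So the plan is: (1) define the vectors $u_n := H_f^* (\chi_{B_\omega R_n} H_f k_{z_n}) / \|\chi_{B_\omega R_n} H_f k_{z_n}\|$ or, more simply, work directly with the compact operator $T := V^* |H_f| V$ or $T := V^* H_f^* H_f V$ for a suitable ``sampling'' map $V$ built from the $k_{z_n}$, and show its diagonal entries in the frame $\{k_{z_n}\}$ control the quantities $\|\chi_{B_\omega R_n} H_f k_{z_n}\|^2$; (2) account for the fact that $\{k_{z_n}\}$ is not orthonormal but, by Lemma \ref{lattice}(2) and (4) together with the separation of the sub-disks $D(z_n, \tfrac{\delta}{B_\omega}\tau_\omega(z_n))$, is a finite union of $\asymp N$ orthonormal-ish pieces (a frame with finite upper frame bound), so one can split into $N$ subfamilies on which the reproducing kernels are ``almost orthogonal'' and apply the trace inequality on each; (3) sum up, absorbing the constant $N$ into the implicit constant, using subadditivity-type properties of $h$ on the split.

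More concretely, I would proceed as follows. First, observe $\|\chi_{B_\omega R_n} H_f k_{z_n}\|^2_{L^2_\omega} = \langle \chi_{B_\omega R_n} H_f k_{z_n}, H_f k_{z_n}\rangle \le \langle H_f k_{z_n}, H_f k_{z_n}\rangle = \langle H_f^* H_f k_{z_n}, k_{z_n}\rangle = \langle |H_f|^2 k_{z_n}, k_{z_n}\rangle$. So in fact it suffices to prove $\sum_n h\big(\langle |H_f|^2 k_{z_n}, k_{z_n}\rangle^{1/2}\big) \lesssim \sum_n h(s_n(H_f))$; the cutoff $\chi_{B_\omega R_n}$ is only there so that the reverse inequalities in the companion upper-estimate lemmas localize, and it is harmless here since it only decreases the left side. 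Then set $g := h(\sqrt{\,\cdot\,})$... but wait, $h$ need not be monotone-compatible with the square root in the convexity sense; instead I would directly use that $t\mapsto h(\sqrt t)$ composed appropriately — actually cleaner: let $\widetilde h(t) := h(t)$ and use the operator $|H_f|$ with its spectral decomposition $|H_f| = \sum_k s_k(H_f) \langle\cdot, v_k\rangle v_k$. Since $\|\chi_{B_\omega R_n} H_f k_{z_n}\| \le \| |H_f| k_{z_n}\|$, and $h$ is increasing, $h(\|\chi_{B_\omega R_n} H_f k_{z_n}\|) \le h(\| |H_f| k_{z_n}\|)$, so it remains to bound $\sum_n h(\| |H_f| k_{z_n}\|)$. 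Now split $\{z_n\}$ into $N$ subfamilies $\Lambda_1,\dots,\Lambda_N$ such that within each $\Lambda_j$ the disks $D(z_n, B_\omega\delta\tau_\omega(z_n))$ are pairwise disjoint; by the off-diagonal kernel decay built into $\mathcal{W}(\Omega)$ (and the arguments behind Lemma \ref{lattice}), the system $\{k_{z_n}\}_{n\in\Lambda_j}$ is a Bessel sequence with bound $O(1)$, equivalently the analysis operator $C_j: g\mapsto (\langle g, k_{z_n}\rangle)_{n\in\Lambda_j}$ is bounded on $A^2_\omega$ with $\|C_j\|\lesssim 1$. Then the operator $C_j |H_f|^2 C_j^*$ (or rather the relevant positive operator whose diagonal is $\| |H_f| k_{z_n}\|^2 = \langle |H_f|^2 k_{z_n}, k_{z_n}\rangle$) is dominated by $C_j^* C_j$-conjugation of $|H_f|^2$, hence has singular values dominated (via Weyl/Ky Fan, Lemma \ref{Weyl's Lemma}) by those of $|H_f|^2$ up to the constant $\|C_j\|^2 \lesssim 1$, and its diagonal entries majorize the $\| |H_f| k_{z_n}\|^2$. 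Applying $\sum_n h(\sqrt{\text{diag}}) \le \operatorname{Tr} h(\sqrt{\,\cdot\,}) \le \sum_k h(C s_k(H_f))$ and then the doubling/comparability of $h$ under the scaling $C$ (here I must be a little careful — if $h$ is merely convex and increasing with $h(0)=0$, then $h(Ct)\le C h(t)$ for $C\le 1$ and $h(Ct)\le \lceil C\rceil h(t)$ for $C\ge 1$ by convexity, so a constant factor in the argument costs only a constant factor overall), we get $\sum_{n\in\Lambda_j} h(\| |H_f| k_{z_n}\|) \lesssim \sum_k h(s_k(H_f))$. Summing over $j=1,\dots,N$ gives the claim with implicit constant depending on $N$ and $\omega$.

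The main obstacle, and the place requiring the most care, is the rigorous passage from the frame $\{k_{z_n}\}$ to honest orthonormal systems so that the trace inequality $\sum h(\langle Te_n,e_n\rangle) \le \operatorname{Tr} h(T)$ applies: one must make precise the ``almost orthogonality'' of $\{k_{z_n}\}_{n\in\Lambda_j}$ and translate it into a bound on the analysis operator, which is exactly where hypotheses $\mathbf{(H)_2}$ and $\mathbf{(H)_4}$ and the finite-multiplicity property (4) of Lemma \ref{lattice} enter. A secondary technical point is verifying that the standard majorization $\operatorname{Tr} h(V^* A V) \le \operatorname{Tr} h(\|V\|^2 A)$-type inequality holds for the compact positive operators and convex increasing $h$ with $h(0)=0$ in play — this follows from Ky Fan's majorization principle together with the fact that $h$ applied to a majorized sequence preserves the inequality of partial sums (Hardy–Littlewood–Pólya), but it should be stated cleanly. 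Everything else — the localization to $B_\omega R_n$, the reproducing-kernel size estimate $|k_{z_n}(\xi)|^2 \asymp \tau_\omega(\xi)^{-2}\omega(\xi)^{-1}$ on $D(z_n,\delta\tau_\omega(z_n))$ as already used in \eqref{uselater}, and the bookkeeping of the $N$-fold split — is routine given the machinery assembled earlier in the paper.
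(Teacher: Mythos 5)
The paper itself does not reprove this lemma; it defers to \cite[Lemma 4.2]{MR4413302}, whose argument is the one you are groping for: split the lattice into $N$ separated subfamilies (Lemma \ref{lattice}(4)), realize the quantities $\|\chi_{B_\omega R_n}H_fk_{z_n}\|_{L_\omega^2}$ as diagonal entries of a compression of $H_f$ by bounded maps built from $\{k_{z_n}\}$ and from the truncations, and apply a diagonal--singular value majorization inequality. So your overall architecture is right, but two steps of your execution are wrong. First, the claim that ``$h(Ct)\le \lceil C\rceil h(t)$ for $C\ge 1$ by convexity'' is false: convexity together with $h(0)=0$ gives $h(\lambda t)\le \lambda h(t)$ only for $\lambda\in[0,1]$, and for $\lambda>1$ the inequality reverses, $h(\lambda t)\ge \lambda h(t)$. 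There is no bound of the form $h(Ct)\le C'h(t)$ for general increasing convex $h$ with $h(0)=0$; the functions $h_\theta(t)=(t-\theta)^+$ that the paper actually feeds into this lemma are already counterexamples, since $h_\theta(Ct)>0=h_\theta(t)$ for $\theta/C<t<\theta$. Consequently you cannot pull the frame constant $\|C_j\|>1$ out of $h$; it must stay inside $h$ (which is harmless downstream, because Lemmas \ref{Convex1} and \ref{finelemma} are built to absorb a fixed constant $B$ inside $h$), and the clean way to arrange this is to compress $H_f$ by genuine contractions, e.g.\ $A_j/\|A_j\|$ where $A_je_n=k_{z_n}$.

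Second, and more seriously, discarding the cutoff and reducing to $\sum_n h\bigl(\||H_f|k_{z_n}\|\bigr)=\sum_n h\bigl(\langle |H_f|^2k_{z_n},k_{z_n}\rangle^{1/2}\bigr)$ breaks the key inequality: the trace--Jensen/majorization step $\sum_n\phi(\langle Se_n,e_n\rangle)\le \sum_n\phi(\lambda_n(S))$ requires $\phi$ convex, and $\phi(t)=h(\sqrt t)$ is in general not convex for convex increasing $h$ (already $h(t)=t$ fails); weak submajorization is not preserved under composition with increasing non-convex functions, so ``$\sum_n h(\sqrt{\mathrm{diag}})\le \mathrm{Tr}\,h(\sqrt{\,\cdot\,})$'' is unjustified. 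The cutoff $\chi_{B_\omega R_n}$ is exactly what avoids this square root: within a separated subfamily the normalized truncations $g_n:=\chi_{B_\omega R_n}H_fk_{z_n}/\|\chi_{B_\omega R_n}H_fk_{z_n}\|_{L_\omega^2}$ have pairwise disjoint supports, hence form an orthonormal system in $L_\omega^2$, and $\|\chi_{B_\omega R_n}H_fk_{z_n}\|_{L_\omega^2}=\langle H_fk_{z_n},g_n\rangle_{L_\omega^2}$ exactly. Setting $B_je_n=g_n$ (an isometry on its domain) and $A_je_n=k_{z_n}$, these quantities are the diagonal entries of the compact operator $B_j^*H_fA_j$, whose singular values are at most $\|A_j\|s_m(H_f)$, and the Gohberg--Krein/Ky Fan inequality $\sum_nh(|\langle Te_n,e_n\rangle|)\le\sum_mh(s_m(T))$ applies to $T=B_j^*H_fA_j/\|A_j\|$ with no square roots, giving $\sum_{n\in\Lambda_j}h\bigl(\|A_j\|^{-1}\|\chi_{B_\omega R_n}H_fk_{z_n}\|_{L_\omega^2}\bigr)\le\sum_mh(s_m(H_f))$; summing over the $N$ subfamilies yields the estimate in the form the rest of the paper uses. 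So the cutoff is not decorative, and your proposal as written does not close.
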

\begin{proof}
We refer the reader to \cite[Lemma 4.2]{MR4413302} for the proof. The statement there is stated for anti-analytic symbol, but the proof also applies to general symbol $f\in VDA_\omega(\Omega)$.
%
\end{proof}

\begin{theorem}\label{main}
Let $\omega \in \mathcal{W}^\ast(\Omega)$, $\delta\in (0,\delta _\omega)$ and $f\in VDA_\omega(\Omega)$. Let $h: [0,+\infty)\to  [0,+\infty)$ be an increasing convex  function  such that $h(0)=0$, then there exist constants $B,C>0$, which depends only on $\delta$ and $\omega$, such that
	$$
	\displaystyle \frac{1}{C}\int _\Omega h\left ( \frac{1}{B}G_{\delta,\omega}(f)(z)\right ) d\lambda _\omega (z) \leq  \|h(|H_{f}|)\|_{S^1} \leq \displaystyle C\int _\Omega h\left ( BG_{\delta,\omega}(f)(z)\right ) d\lambda _\omega (z).
	$$
\end{theorem}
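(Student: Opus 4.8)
The plan is to prove the two inequalities separately. Note first that $h(|H_{f}|)$ is a positive operator, so $\|h(|H_{f}|)\|_{S^1}=\operatorname{Tr} h(|H_{f}|)=\sum_{n\geq0}h(s_n(H_{f}))$, and that $f\in VDA_\omega(\Omega)\subset BDA_\omega(\Omega)$ makes $H_{f}$ bounded and compact (Theorems \ref{boundedness} and \ref{compactness}), so every term is meaningful. For the \emph{upper bound} I would begin with the decomposition $f=f_1+f_2$ of Lemma \ref{decomposelemma}, applied with $\delta$ replaced by $\delta/B_\omega^2$, so that $\tau_\omega|\bar\partial f_1|$, $M_{\delta,\omega}(\tau_\omega\bar\partial f_1)$ and $M_{\delta,\omega}(f_2)$ are all $\lesssim G_{\delta,\omega}(f)$ pointwise; as in the proof of Theorem \ref{boundedness} this also shows $f_1,f_2\in\mathcal S$, so $H_{f}=H_{f_1}+H_{f_2}$. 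Setting $d\mu:=\tau_\omega^2|\bar\partial f_1|^2\,dA$ and $d\nu:=|f_2|^2\,dA$, Corollary \ref{keycoro} gives $\|H_{f_1}g\|_{L^2_\omega}^2\lesssim\langle T_\mu g,g\rangle$, and the elementary bound $\|H_{f_2}g\|_{L^2_\omega}\leq\|f_2 g\|_{L^2_\omega}$ gives $\|H_{f_2}g\|_{L^2_\omega}^2\leq\langle T_\nu g,g\rangle$ (via \eqref{Toeformula}); hence $H_{f}^*H_{f}\lesssim H_{f_1}^*H_{f_1}+H_{f_2}^*H_{f_2}\lesssim T_\mu+T_\nu=:T_m$ as operator inequalities, where $dm=d\mu+d\nu$. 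Since $T_m$ is positive and compact, Weyl's monotonicity lemma (Lemma \ref{Weyl's Lemma}) yields $s_n(H_{f})^2=\lambda_n(H_{f}^*H_{f})\lesssim\lambda_n(T_m)=s_n(T_m)$, so putting $g(t):=h(\sqrt{Ct})$ — which is increasing, vanishes at $0$ and satisfies that $t\mapsto g(t^2)=h(\sqrt{C}t)$ is convex — we obtain $\|h(|H_{f}|)\|_{S^1}=\sum_n h(s_n(H_{f}))\leq\sum_n g(s_n(T_m))=\operatorname{Tr} g(T_m)$.

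The core of the upper bound is then a trace estimate for a positive Toeplitz operator: for any increasing $g$ with $g(0)=0$ and $g(t^2)$ convex, and any positive Borel measure $\sigma$, $\operatorname{Tr} g(T_\sigma)\lesssim\int_\Omega g\big(B\,\widehat\sigma(z)\big)\,d\lambda_\omega(z)$, where $\widehat\sigma(z):=\fint_{D(z,\delta'\tau_\omega(z))}d\sigma$ for a suitable aperture $\delta'$. I would establish this through the frame/atomic machinery attached to the lattice $\{z_n\}$ of Lemma \ref{lattice}: localizing $\langle T_\sigma g,g\rangle=\int_\Omega|g|^2\omega\,d\sigma$ over the cells and using the weighted sub-mean-value estimate $|g(z_n)|^2\omega(z_n)\|K_{z_n}\|_{A^2_\omega}^2\gtrsim\fint_{D(z_n,\cdot)}|g|^2\omega\,dA$ gives $T_\sigma\lesssim\sum_n\widehat\sigma(z_n)\,\langle\,\cdot\,,k_{z_n}\rangle k_{z_n}$; the Bessel inequality $\sum_n|\langle g,k_{z_n}\rangle|^2\lesssim\|g\|_{A^2_\omega}^2$ (finite multiplicity, Lemma \ref{lattice}) then forces the singular values of the right-hand side to be dominated by the non-increasing rearrangement of $(\widehat\sigma(z_n))_n$, whence $\operatorname{Tr} g(T_\sigma)\lesssim\sum_n g(C\widehat\sigma(z_n))\asymp\int_\Omega g(C'\widehat\sigma)\,d\lambda_\omega$, using $\lambda_\omega(D(z_n,\delta\tau_\omega(z_n)))\asymp1$ and finite overlap. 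Applying this to $\sigma=\mu$ and $\sigma=\nu$, together with $\widehat\mu(z)\asymp M_{\delta,\omega}(\tau_\omega\bar\partial f_1)(z)^2$, $\widehat\nu(z)=M_{\delta,\omega}(f_2)(z)^2$, both $\lesssim G_{\delta,\omega}(f)(z)^2$ by Lemma \ref{decomposelemma}, gives $g(B\widehat\mu(z))\leq g(B'G_{\delta,\omega}(f)(z)^2)=h(\sqrt{CB'}\,G_{\delta,\omega}(f)(z))\leq h(BG_{\delta,\omega}(f)(z))$ after enlarging $B$; integrating completes the upper estimate.

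For the \emph{lower bound} I would not decompose $f$. By Lemma \ref{lower estimatekey}, $\sum_n h\big(\|\chi_{B_\omega R_n}H_{f}k_{z_n}\|_{L^2_\omega}\big)\lesssim\sum_n h(s_n(H_{f}))=\|h(|H_{f}|)\|_{S^1}$. On $B_\omega R_n$ one has $|k_{z_n}(\xi)|\asymp\|K_\xi\|_{A^2_\omega}=\tau_\omega(\xi)^{-1}\omega(\xi)^{-1/2}$ (by ${\rm\bf (H)_4}$ and $B_\omega\delta_\omega<\eta/4$), so $k_{z_n}^{-1}$ is holomorphic there and, exactly as in \eqref{uselater}, $\|\chi_{B_\omega R_n}H_{f}k_{z_n}\|_{L^2_\omega}^2\asymp\fint_{B_\omega R_n}\big|f-P_\omega(fk_{z_n})/k_{z_n}\big|^2\,dA\gtrsim G_{\delta,\omega}(f)(z_n)^2$, whence $\|h(|H_{f}|)\|_{S^1}\gtrsim\sum_n h(cG_{\delta,\omega}(f)(z_n))$. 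Finally, using the disjoint sub-lattice $\{D(z_n,\tfrac{\delta}{B_\omega}\tau_\omega(z_n))\}$, which has $\lambda_\omega$-measure $\asymp1$, together with the pointwise bound $G_{\delta,\omega}(f)(z)\lesssim G_{\delta,\omega}(f)(z_n)$ on (a shrinking of) this cell, one gets $h(\tfrac1B G_{\delta,\omega}(f)(z))\leq h(cG_{\delta,\omega}(f)(z_n))$ for $B$ large, and summing gives $\tfrac1C\int_\Omega h(\tfrac1B G_{\delta,\omega}(f))\,d\lambda_\omega\leq\sum_n h(cG_{\delta,\omega}(f)(z_n))\leq\|h(|H_{f}|)\|_{S^1}$.

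I expect the principal obstacle to be twofold. First, making the Toeplitz trace estimate fully rigorous in the $\mathcal W^*(\Omega)$ setting — in particular the domination $T_\sigma\lesssim\sum_n\widehat\sigma(z_n)\langle\,\cdot\,,k_{z_n}\rangle k_{z_n}$ and the control of $\operatorname{Tr} g$ of a boundedly overlapping positive sum of rank-one operators when only $g(t^2)$, not $g$ itself, is assumed convex. Second, reconciling the various apertures that appear along the way: the decomposition of Lemma \ref{decomposelemma} produces $G_{B_\omega^2\delta,\omega}(f)$, the localization and sub-mean-value steps introduce still other radii, and passing back and forth between sums over $\{z_n\}$ and integrals against $d\lambda_\omega$ forces one to compare $G_{\delta,\omega}(f)$ at different apertures and at nearby points — which is exactly where the $\delta$-independence of the $IDA_\omega^p(\Omega)$ spaces, in its quantitative integrated form, is needed and where the free constants $B,C$ of the statement are spent. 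The assumption that $h$ is convex (rather than merely $h(t^p)$ convex for some $p<1$) is what keeps all these rearrangement estimates at the $\ell^1$/$L^1$ level and makes the argument close.
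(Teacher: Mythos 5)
Your proposal follows essentially the same route as the paper: the same decomposition $f=f_1+f_2$ from Lemma \ref{decomposelemma}, domination of $H_{f_1}^*H_{f_1}$ and $H_{f_2}^*H_{f_2}$ by the Toeplitz operators $T_\mu$, $T_\nu$ via Corollary \ref{keycoro} and \eqref{Toeformula}, a trace estimate for $\tilde h(T_\sigma)$ with $\tilde h(t)=h(\sqrt t)$, and for the lower bound exactly the paper's combination of Lemma \ref{lower estimatekey} with the reverse estimate \eqref{uselater} on the cells $B_\omega R_n$. The two points where you diverge are both sound but worth noting: you combine the pieces at the operator level via $H_f^*H_f\leq 2(H_{f_1}^*H_{f_1}+H_{f_2}^*H_{f_2})$ and Weyl's lemma, whereas the paper keeps $H_{f_1}$ and $H_{f_2}$ separate and recombines the singular values with the inequality $s_{2k+j}(H_f)\leq s_{k+j}(H_{f_1})+s_{k+1}(H_{f_2})$ together with convexity of $h$ --- your version is slightly cleaner; and the Toeplitz trace estimate $\operatorname{Tr}\tilde h(T_\sigma)\lesssim\sum_n\tilde h\bigl(B\,\widehat\sigma(z_n)\bigr)$, which you correctly identify as the main technical obstacle and sketch via a frame/Bessel argument, is not reproved in the paper but quoted directly from \cite[Theorem 4.5]{MR4516170}, so the ``rank-one domination plus rearrangement'' step you worry about is exactly the content of that citation and need not be redone.
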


\begin{proof}
By spectral decomposition theorem, we have
$$\|h(|H_{f}|)\|_{S^1}=\displaystyle \sum _{n\geq 0}h\big(s _n(H_{f} ) \big ). $$
Now we divide our proof into two steps.

{\bf Step 1.} We prove that for any $\delta\in (0,\delta _\omega)$, there exist constants $B,C>0$ such that $$ \displaystyle \sum _{n\geq 0}h\big (s _n(H_{f} ) \big ) \leq  \displaystyle C\int_{\Omega} h(BG_{\delta,\omega}(f)(z))\,d\lambda_\omega(z).$$
To begin with, we apply Lemma \ref{decomposelemma}, with $\delta$ being replaced by $\delta/B_\omega^3$, to write $f=f_1+f_2$ such that $f_1\in C^\infty(\Omega)$ and
\begin{align}\label{keyinequality}
\tau_\omega(z)\bar{\partial}f_1(z)+M_{\delta/B_\omega^3,\omega} (\tau_\omega \bar{\partial}f_1)(z)+M_{\delta/B_\omega^3,\omega} (f_2)(z)\lesssim G_{\delta/B_\omega,\omega}(f)(z)
\end{align}
for all $z\in \Omega.$ Since $f\in VDA_\omega(\Omega)\subset BDA_\omega(\Omega)$, it can be seen from the proof of Theorem \ref{boundedness} that $f_1,f_2\in\mathcal{S}$.

By Corollary \ref{keycoro}, for any $g\in \Gamma$,
	\begin{equation}\label{d-bar2}
	\int_\Omega |H_{f_1}g(z)|^2dA_\omega(z) \lesssim \int_\Omega \tau_\omega(z)^2|\bar{\partial}f_1(z)|^2|g(z)|^2 dA_\omega(z).
	\end{equation}
The equation $(\ref{d-bar2})$ implies that
	\begin{equation} \label{hankel toeplitz}
	H_{f_1}^\ast H_{f_1}\lesssim T_{\mu_{f_1}},
	\end{equation}
where $d\mu_{f_1}:=\tau_\omega^2|\bar{\partial}f_1|^2dA$. Moreover, we have
	\begin{equation}\label{obvious}
\|H_{f_2}g\|_{L_\omega^2}^2=\|(I-P_\omega)(f_2g)\|_{L_\omega^2}^2\leq \|M_{|f_2|}g\|_{L_\omega^2}^2.
\end{equation}
The equation $\eqref{obvious}$ implies that
	\begin{equation} \label{hankel toeplitz}
	H_{f_2}^\ast H_{f_2}\lesssim T_{\mu_{f_2}},
	\end{equation}
where $d\mu_{f_2}:=|f_2|^2dA$.
	By Lemma \ref{Weyl's Lemma}, for $j=1,2$,
\begin{equation} \label{HT}
	s_n^2(H_{f_j})=\lambda_{n}(H_{f_j}^\ast H_{f_j}) \lesssim   \lambda _n(T_{\mu_{f_j}}).
	\end{equation}
Let $\tilde{h}(t):= h(\sqrt{t})$.
By \cite[Theorem 4.5]{MR4516170}, for $j=1,2$,
		\begin{align}\label{subtt}
			\sum_{n\geq 0} h(s_n(H_{f_j})) & = \sum_{n\geq 0} \tilde{h}(s_n^2(H_{f_j}))
			\leq \sum_{n\geq 0} \tilde{h}( B\lambda _n(T_{\mu _{f_j}}))
			\leq \sum_{n\geq 0}  \tilde{h}\left(B \frac{\mu _{f_j}(R_n)}{A(R_n)}\right),
		\end{align}
where in the last inequality we choose $\{R_n\}_n= \{ D (z_n, \frac{\delta}{B_\omega^3} \tau _\omega (z_n))\}_n\in  \mathcal{L}_\omega (\frac{\delta}{B_\omega^3} )$.
By inequality \eqref{keyinequality},
\begin{align}\label{substi1}
\sum_{n\geq 0}  \tilde{h}\left(B \frac{\mu _{f_1}(R_n)}{A(R_n)}\right)&=  \sum_{n\geq 0}  \tilde{h} \left( B \fint_{R_n} |\tau_\omega(z)|^2|\bar{\partial}f_1(z)|^2 dA(z)\right)\nonumber\\
			&=\sum_{n\geq 0} h(BM_{\delta/B_\omega^3,\omega}(\tau_\omega|\bar{\partial}f_1|)(z_n))\nonumber\\
			&\leq \sum_{n\geq 0} h(BG_{\delta/B_\omega,\omega}(f)(z_n)).
\end{align}
Similarly, applying inequality \eqref{keyinequality}  again, we deduce that
		\begin{align}\label{substi2}
			\sum_{n\geq 0}  \tilde{h}\left(B \frac{\mu _{f_2}(R_n)}{A(R_n)}\right) \leq \sum_{n\geq 0} h(BG_{\delta/B_\omega,\omega}(f)(z_n)).
		\end{align}
Substituting inequalities \eqref{substi1} and \eqref{substi2} into inequality \eqref{subtt}, we conclude that for $j=1,2$,
\begin{align*}
\sum_{n\geq 0} h(s_n(H_{f_j}))\leq \sum_{n\geq 0} h(BG_{\delta/B_\omega,\omega}(f)(z_n)).
\end{align*}
Applying \cite[Corollary 1.35]{MR2311536}, we see that for $j\in\{0,1\}$ and for any nonnegative integer $k$,
\begin{align}\label{plislem}
s_{2k+j}(H_{f})\leq s_{k+j}(H_{f_1})+s_{k+1}(H_{f_2}).
\end{align}
		From this and the fact that $h$ is increasing and convex, we obtain
		\begin{align*}
			\sum_{n\geq 0} h(s_n(H_{f}))
			&\lesssim \sum_{n\geq0} \left(h(2s_n(H_{f_1}))+h(2s_n(H_{f_2}))\right)\\
			&\lesssim \sum_{n\geq0} h(BG_{\delta/B_\omega,\omega}(f)(z_n)).
		\end{align*}
Since $G_{\delta/B_\omega,\omega}(f)(z_n)\lesssim G_{\delta,\omega}(f)(z)$ for any $z\in R_n$, we have
		\begin{align*}
			h(BG_{\delta/B_\omega,\omega}(f)(z_n))&\leq h\left(B\fint_{R_n}G_{\delta,\omega}(f)(z)\,dA(z)\right)\\
			&\leq \int_{R_n}h(BG_{\delta,\omega}(f)(z))\frac{dA(z)}{A(R_n)}\\
			&\lesssim \int_{R_n}h(BG_{\delta,\omega}(f)(z))\,d\lambda_\omega(z).
		\end{align*}
		Therefore, by the finite multiplicity of $(R_n)_n$, we get
		$$\sum _{n\geq 0}h\left (s _n(H_{f} ) \right )\lesssim  \int_{\Omega} h(BG_{\delta,\omega}(f)(z))\,d\lambda_\omega(z).$$

{\bf Step 2.} We prove that for any $\delta\in (0,\delta _\omega)$, there exist constants $B,C>0$ such that $$ \displaystyle \frac1C\int_{\Omega} h(\frac{1}{B}G_{\delta,\omega}(f)(z))\,d\lambda_\omega(z)\leq \sum _{n\geq 0}h\big (s _n(H_{f} ) \big ) \displaystyle .$$
To begin with, by Lemma \ref{lower estimatekey}, we have
		\begin{align}\label{combb1}
\sum_{n\geq 0} h(\|\chi_{B_\omega R_n}H_f k_{z_n}\|_{L_\omega^2})\lesssim \sum_{n\geq0} h(s_n(H_{f})).
\end{align}
Note that for any $\xi \in R_n$,
		$$|k_{z_n}(\xi)|\asymp \|K_\xi\|_{A_\omega^2}= \tau_\omega(\xi)^{-1}\omega(\xi)^{-1/2}.$$
		Therefore, $k_{z_n}^{-1}\in \text{Hol}(R_n)$ and
		\begin{align*}
			\|\chi_{B_\omega R_n}H_f k_{z_n}\|_{L_\omega^2}^2&=\int_{B_\omega R_n} |fk_{z_n}-P_\omega(fk_{z_n})|^2\omega\,dA\nonumber\\
			&=\int_{B_\omega R_n} |k_{z_n}|^2\left|f-\frac{P_\omega(fk_{z_n})}{k_{z_n}} \right|^2\omega\,dA\nonumber\\
			&\asymp \fint_{B_\omega R_n} \left|f-\frac{P_\omega(fk_{z_n})}{k_{z_n}} \right|^2\,dA\nonumber\\
			&\gtrsim G_{B_\omega \delta,\omega}(f)(z_n)^2.
		\end{align*}
Combining this inequality with the facts that $G_{\delta,\omega}(f)(z)\lesssim  G_{B_\omega\delta,\omega}(f)(z_n)$ for any $z\in R_n$ and that $h$ is an increasing function, we conclude that there exists a constant $B>0$ such that
\begin{align}\label{combb3}
\int_{R_n} h(\frac{1}{B}G_{\delta,\omega}(f)(z)) \,d\lambda_\omega(z)\lesssim  h( \|\chi_{B_\omega R_n}H_f k_{z_n}\|_{L_\omega^2}).
\end{align}
Combining inequalities \eqref{combb1} and \eqref{combb3} together, we conclude that
		$$\int_{\Omega} h(\frac{1}{B}G_{\delta,\omega}(f)(z))\,d\lambda_\omega(z)\lesssim \sum_{n\geq0} h(s_n(H_{f})).$$
This completes the proof of Theorem \ref{main}.
\end{proof}

\begin{coro}\label{SpandSpinfty}
Let $\omega \in \mathcal{W}^\ast(\Omega)$, $\delta\in (0,\delta _\omega)$ and $f\in \mathcal{S}\cap VDA_\omega(\Omega)$, then for any $1\leq p<+\infty$, $H_f\in S^p(A^2_\omega \to L_\omega^2)$
	if and only if $f\in IDA_\omega^p(\Omega)$.
\end{coro}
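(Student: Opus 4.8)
The plan is to deduce the $S^p$-characterization directly from the two-sided trace estimate in Theorem \ref{main}, by choosing the convex function $h$ appropriately. Since $1\le p<\infty$, the function $h(t):=t^p$ is increasing and convex on $[0,\infty)$ with $h(0)=0$, so Theorem \ref{main} applies to it. With this choice,
\[
\|h(|H_f|)\|_{S^1}=\sum_{n\ge0}h(s_n(H_f))=\sum_{n\ge0}s_n(H_f)^p=\|H_f\|_{S^p}^p,
\]
while the integral side becomes
\[
\int_\Omega h\big(c\,G_{\delta,\omega}(f)(z)\big)\,d\lambda_\omega(z)=c^p\int_\Omega G_{\delta,\omega}(f)(z)^p\,d\lambda_\omega(z)=c^p\,\|G_{\delta,\omega}(f)\|_{L^p(\Omega,d\lambda_\omega)}^p=c^p\,\|f\|_{IDA_\omega^p(\Omega)}^p
\]
for any constant $c>0$. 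Feeding $c=1/B$ into the lower bound and $c=B$ into the upper bound of Theorem \ref{main} yields
\[
\frac{1}{C\,B^p}\,\|f\|_{IDA_\omega^p(\Omega)}^p\le \|H_f\|_{S^p}^p\le C\,B^p\,\|f\|_{IDA_\omega^p(\Omega)}^p,
\]
so $\|H_f\|_{S^p}\asymp\|f\|_{IDA_\omega^p(\Omega)}$ with implicit constants depending only on $p$, $\delta$ and $\omega$. In particular $H_f\in S^p(A^2_\omega\to L^2_\omega)$ if and only if $f\in IDA_\omega^p(\Omega)$, which is exactly the asserted equivalence.

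One point that must be addressed for rigor is that $s_n(H_f)$ (and hence $h(|H_f|)$ and its $S^1$-norm) is only \emph{a priori} meaningful once $H_f$ is known to be compact. Here the hypothesis $f\in VDA_\omega(\Omega)$ enters: by Theorem \ref{compactness} (together with Theorem \ref{boundedness}, noting $VDA_\omega(\Omega)\subset BDA_\omega(\Omega)$), $H_f$ is a compact operator from $A^2_\omega$ to $L^2_\omega$, so $\{s_n(H_f)\}$ is well defined and the spectral-decomposition step invoked in the proof of Theorem \ref{main} is legitimate. The assumption $f\in\mathcal S$ guarantees that $H_f$ is densely defined in the first place. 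I would state these reductions explicitly at the start of the argument.

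I do not expect any genuine obstacle here: the corollary is essentially a specialization of Theorem \ref{main} with $h(t)=t^p$, and the only routine check is that $t\mapsto t^p$ is convex for $p\ge1$ (which is exactly why the range is restricted to $p\ge1$ rather than all $p>0$; for $p<1$ the function $t^p$ fails to be convex and a different argument, via Lemma \ref{finelemma} or the weak-type machinery, would be needed, but that lies outside the scope of this corollary). The mild bookkeeping point — that the multiplicative constants $B$ produced by Theorem \ref{main} get raised to the power $p$ and so remain harmless finite constants depending only on $p,\delta,\omega$ — is the only thing to keep an eye on.
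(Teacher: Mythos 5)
Your argument is exactly the paper's: Corollary \ref{SpandSpinfty} is obtained by specializing Theorem \ref{main} to $h(t)=t^p$, which is increasing and convex precisely for $p\ge 1$, with the $VDA_\omega(\Omega)$ hypothesis guaranteeing compactness so that the singular values are defined. Your write-up is correct and just spells out the same one-line deduction in more detail.
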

\begin{proof}
The proof is a direct consequence of Theorem \ref{main} by choosing $h(t)=t^p$.
\end{proof}
\begin{proof}[Proof of Theorem \ref{main} \eqref{AAAAA}]
We first apply \cite[Proposition 1.1.4 and Proposition 1.4.5 (12)]{MR3243734} to see that for every increasing function $h: [0,+\infty)\to  [0,+\infty)$ satisfying $h(0)=0$,
\begin{align}\label{disformm}
\int_{\Omega} h(G_{\delta,\omega}(f)(z))\,d\lambda_\omega(z)=\int_0^\infty h((G_{\delta,\omega}(f))^*(t))dt.
\end{align}
This, in combination with Theorem \ref{main}, yields
\begin{align*}
\int_0^\infty h((G_{\delta,\omega}(f))^*(t))dt\lesssim \displaystyle \sum _{n\geq 0}h\big (s _n(H_{f} ) \big ) \lesssim \int_0^\infty h((G_{\delta,\omega}(f))^*(t))dt.
\end{align*}
This, together with Lemma \ref{Convex1}, yields
$$
s_n(H_{f} ) \lesssim 1/ \rho (n),\ {\rm for}\ {\rm all}\ n\in\mathbb{N}  \Longleftrightarrow  (G_{\delta,\omega}(f))^*(n)  \lesssim 1/ \rho (n),\ {\rm for}\ {\rm all}\ n\in\mathbb{N}.
$$
This completes the proof of Theorem \ref{main} \eqref{AAAAA}.
\end{proof}

\begin{lemma}\label{simmu}
Let $\omega \in \mathcal{W}^\ast(\Omega)$, $\delta\in (0,\delta _\omega)$  and $f\in L_{\rm loc}^2(\Omega)$, then
\begin{align*}
 G_{\delta,\omega}(f)(z)+G_{\delta,\omega}(\bar {f})(z)\asymp MO_{\delta,\omega} (f)(z),\quad {\rm for}\ {\rm all}\ z\in\Omega.
\end{align*}
\end{lemma}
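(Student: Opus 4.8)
The statement to prove is the pointwise comparison
$$G_{\delta,\omega}(f)(z)+G_{\delta,\omega}(\bar f)(z)\asymp MO_{\delta,\omega}(f)(z),\qquad z\in\Omega.$$
The inequality $\lesssim$ is the easy direction: by definition $G_{\delta,\omega}(f)(z)$ is an infimum over holomorphic $h$ on $D(z,\delta\tau_\omega(z))$ of the $L^2$-average of $|f-h|$, so testing with the constant $h\equiv\hat f_\delta(z)$ gives $G_{\delta,\omega}(f)(z)\le MO_{\delta,\omega}(f)(z)$. Likewise, testing $G_{\delta,\omega}(\bar f)(z)$ with the constant $h\equiv\overline{\hat f_\delta(z)}$ gives $G_{\delta,\omega}(\bar f)(z)\le MO_{\delta,\omega}(\bar f)(z)$, and since $|\bar f-\overline{\hat f_\delta(z)}|=|f-\hat f_\delta(z)|$ pointwise, $MO_{\delta,\omega}(\bar f)(z)=MO_{\delta,\omega}(f)(z)$. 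Adding the two bounds yields $G_{\delta,\omega}(f)(z)+G_{\delta,\omega}(\bar f)(z)\le 2\,MO_{\delta,\omega}(f)(z)$.

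For the reverse inequality $\gtrsim$, fix $z$ and write $D:=D(z,\delta\tau_\omega(z))$. Choose near-optimal competitors: $h\in\mathrm{Hol}(D)$ with $\bigl(\fint_D|f-h|^2\,dA\bigr)^{1/2}\le 2G_{\delta,\omega}(f)(z)$ and $g\in\mathrm{Hol}(D)$ with $\bigl(\fint_D|\bar f-g|^2\,dA\bigr)^{1/2}\le 2G_{\delta,\omega}(\bar f)(z)$; equivalently $\bar g$ is anti-holomorphic on $D$ and approximates $f$. The point is that $h-\bar g$ is a difference of a holomorphic and an anti-holomorphic function which is close to $f-f=0$, so it has small oscillation; more precisely, $\fint_D|h-\bar g|^2\,dA\lesssim G_{\delta,\omega}(f)(z)^2+G_{\delta,\omega}(\bar f)(z)^2$ by the triangle inequality. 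I would then control $MO_{\delta,\omega}(f)(z)$ by $\fint_D|f-c|^2$ for the specific constant $c$ chosen to be $(h+\bar g)$ evaluated suitably — but the cleanest route is: $MO_{\delta,\omega}(f)(z)^2\le\fint_D|f-c|^2\,dA$ for any constant $c$ (since $\hat f_\delta(z)$ is the $L^2$-minimizing constant), and then split $f-c=(f-h)+(h-c)$, reducing matters to showing that a holomorphic function $h$ on $D$ whose real and imaginary "oscillation data" are controlled by $G_{\delta,\omega}(f)(z)+G_{\delta,\omega}(\bar f)(z)$ must have $\fint_D|h-\hat h_\delta(z)|^2\lesssim(\text{that quantity})^2$.

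The key mechanism is a \emph{subharmonicity / mean value} argument on a slightly smaller disc. Since $h$ and $\bar g$ are (anti-)holomorphic and $h-\bar g\approx f-f$, the function $\varphi:=h-\bar g$ is harmonic on $D$ with $\fint_D|\varphi|^2\lesssim G_{\delta,\omega}(f)(z)^2+G_{\delta,\omega}(\bar f)(z)^2$. For a harmonic function, control of the $L^2$-average over $D$ gives control of $\varphi$ and all its derivatives on the concentric disc $D'=D(z,\tfrac12\delta\tau_\omega(z))$ by the mean value property / Cauchy estimates; in particular $\fint_{D'}|\varphi-\varphi(z)|^2\lesssim(\delta\tau_\omega(z))^2\sup_{D'}|\nabla\varphi|^2\lesssim\fint_D|\varphi|^2$. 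Combining the holomorphic part $h$ and anti-holomorphic part $\bar g$ separately (each of $h-h(z)$ and $\bar g-\bar g(z)$ is (anti-)holomorphic with mean-value control) then yields $\fint_{D'}|f-\hat f_\delta(z)|^2\lesssim G_{\delta,\omega}(f)(z)^2+G_{\delta,\omega}(\bar f)(z)^2$ on the smaller disc; finally, the parameters $\delta_\omega,B_\omega$ from Lemma~\ref{lattice} together with $\tau_\omega(w)\asymp\tau_\omega(z)$ on $D$ let one absorb the passage from $D'$ back to $D$ (at the cost of replacing $\delta$ by a comparable constant, which is harmless since the $IDA$-type quantities are independent of the choice of such $\delta$, as noted in the excerpt). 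The main obstacle — and the step requiring genuine care — is this transfer between the full disc and the concentric half-disc: one must use the standard fact that for holomorphic (or harmonic) $u$ one has $\sup_{D'}|u|\lesssim\bigl(\fint_D|u|^2\bigr)^{1/2}$ with constant depending only on the ratio of radii, and then track that all radii involved are comparable to $\delta\tau_\omega(z)$ uniformly in $z$ thanks to hypothesis $\mathbf{(H)}_4$.
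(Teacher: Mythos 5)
Your easy direction ($\lesssim$) coincides with the paper's, and your overall strategy for the reverse inequality --- take near-optimal holomorphic competitors $h$ for $f$ and $g$ for $\bar f$ and exploit that $h-\bar g$ is harmonic and small in $L^2(D)$ --- is the right mechanism (the paper delegates this direction to the proof of \cite[Lemma 6.1]{MR4402674}). The problem is your last step. Interior estimates for the harmonic function $\varphi=h-\bar g$ control $\sup|\nabla\varphi|$, hence the oscillations of $h$ and of $\bar g$ (via $\partial h=\partial\varphi$ and $\bar\partial\bar g=-\bar\partial\varphi$), only on the concentric half-disc $D'=D(z,\tfrac12\delta\tau_\omega(z))$; what you actually obtain is $MO_{\delta/2,\omega}(f)(z)\lesssim G_{\delta,\omega}(f)(z)+G_{\delta,\omega}(\bar f)(z)$, with mismatched radii. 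You then declare the passage from $D'$ back to $D$ ``harmless'' because the $\mathrm{IDA}$-type quantities are independent of $\delta$. That independence is a statement about the norms $\|G_{\delta,\omega}(f)\|_{L^p(d\lambda_\omega)}$, established by lattice/covering arguments, not about pointwise values: there is no pointwise inequality $MO_{\delta,\omega}(f)(z)\lesssim MO_{\delta/2,\omega}(f)(z)$, and covering $D$ by half-radius discs $D(w_i,\tfrac12\delta\tau_\omega(w_i))$ produces $G_{\delta,\omega}(f)(w_i)$ at points $w_i\neq z$, which is only dominated by $G_{B_\omega\delta,\omega}(f)(z)$ --- again changing $\delta$. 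Since the lemma asserts a pointwise equivalence with the \emph{same} $\delta$ on both sides, this step is a genuine gap.

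The gap is avoidable without shrinking the disc. Writing $h(w)=\sum_{n\ge0} a_n(w-z)^n$ and $g(w)=\sum_{m\ge0} b_m(w-z)^m$ on $D$, the monomials $(w-z)^n$ ($n\ge0$) and $\overline{(w-z)}^{\,m}$ ($m\ge1$) form an orthogonal system in $L^2(D,dA)$, so
$$\fint_D|h-\bar g|^2\,dA\;\geq\;\fint_D|h-h(z)|^2\,dA,$$
i.e.\ the mean-zero holomorphic part of the harmonic function $h-\bar g$ is dominated in $L^2(D)$ by the whole function. Since $\fint_D|h-\bar g|^2\lesssim G_{\delta,\omega}(f)(z)^2+G_{\delta,\omega}(\bar f)(z)^2$, taking $c=h(z)$ in $MO_{\delta,\omega}(f)(z)^2\le\fint_D|f-c|^2\le 2\fint_D|f-h|^2+2\fint_D|h-h(z)|^2$ yields the reverse inequality on the full disc $D$ with the same $\delta$. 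This is essentially the argument of \cite[Lemma 6.1]{MR4402674} that the paper invokes; replacing your half-disc gradient estimates by this orthogonality step closes the proof.
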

\begin{proof}
It follows from the definition that $G_{\delta,\omega}(f)(z)\lesssim MO_{\delta,\omega} (f)(z)$ and $G_{\delta,\omega}(\bar{f})(z)\lesssim MO_{\delta,\omega} (\bar{f})(z)=MO_{\delta,\omega} (f)(z)$ for all $z\in\Omega$. Conversely,
as in the proof of \cite[Lemma 6.1]{MR4402674}, we have
\begin{align*}
MO_{\delta,\omega} (f)(z)\lesssim G_{\delta,\omega}(f)(z)+G_{\delta,\omega}(\bar {f})(z),\quad {\rm for}\ {\rm all}\ z\in\Omega.
\end{align*}
This yields the required conclusion.
\end{proof}

\begin{proof}[Proof of Theorem \ref{simultaneous}]
The simultaneous membership of $s_n(H_f)$ and $s_n(H_{\bar{f}})$ follows directly from Theorem \ref{main}, in combination with Lemma \ref{simmu} and \cite[Proposition 1.4.5 (4)]{MR3243734}.
\end{proof}

\subsection{Singular values with fast dacay}
Inspired by the argument in \cite[Theorem 4.5]{MR4413302}, we now show that the decay assumption of $\rho$ on one side of Theorem \ref{main} \eqref{AAAAA} can be weaken.
\begin{proof}[Proof of Theorem \ref{main} \eqref{BBBBB}]
We decompose $f$ as in \eqref{keyinequality}. Following the proof of Theorem \ref{main}, we see that for any increasing function $h:[0,\infty)\to [0,\infty)$ such that $h(t^p)$ is convex for some $p>\gamma$,
		\begin{align*}
			\sum_{n\geq 0} h(s_{n}(H_{f_j}))\leq \sum_{n\geq 0} h(BG_{\delta/B_\omega,\omega}(f)(z_n)).
		\end{align*}
		Since $G_{\delta/B_\omega,\omega}(f)(z_n)\lesssim G_{\delta,\omega}(f)(z)$ for any $z\in R_n$, by the convexity of $h(t^p)$, there is a constant $\tilde{B}>0$ such that
		\begin{align*}
			h(BG_{\delta/B_\omega,\omega}(f)(z_n))&\leq h\left(\left[\fint_{R_n}\tilde{B}G^{1/p}_{\delta,\omega}(f)(z)\,dA(z)\right]^p\right)\\
			&\leq \int_{R_n}h(\tilde{B}G_{\delta,\omega}(f)(z))\frac{dA(z)}{A(R_n)}\\
			&\lesssim \int_{R_n}h(\tilde{B}G_{\delta,\omega}(f)(z))\,d\lambda_\omega(z).
		\end{align*}
		By the finite multiplicity of $(R_n)_n$, we have
		$$\sum _{n\geq 0}h\left (s_n(H_{f_j} ) \right )\lesssim  \int_{\Omega} h(\tilde{B}G_{\delta,\omega}(f)(z))\,d\lambda_\omega(z).$$
This, in combination with formula \eqref{disformm} and Lemma \ref{finelemma},  implies that for $j\in\{0,1\}$,
\begin{align}\label{nboopp}
s_n(H_{f_j})\lesssim 1/\rho(n).
\end{align}
Using inequalities \eqref{plislem} and \eqref{nboopp} and then applying the monotonicity assumption of $\rho$, we conclude that $s_n(H_f)\lesssim 1/\rho(n)$ for all $n\in\mathbb{N}$.
\end{proof}

In the rest of this subsection, we turn to the proof of Theorem \ref{fastdecay}. For convenience, we will use the following concrete $\delta$-lattice $\Lambda$ in $\mathbb{C}$ instead of the lattice given in Lemma \ref{lattice}. Given $\delta>0$ and $w_1\in \mathbb{C}$, we denote
$
	\Lambda:= \left \{w_1+ \delta  ( m+ \textrm i s): m, s  \in {\mathbb Z} \right \}.
$
For  fixed $K\in \mathbb N$, we write
$$
	\left\{ w_1+ z\in \Lambda:  0
	\le  \mathrm {Re}\, z, \mathrm {Im}\, z <K \delta \right\}
	=: \left\{w_1, \ldots, w_{K^{2}}\right\},
$$
and for $1\le k\le K^{2}$,
$$
	\Lambda_k:=  \left \{w_k+ K\delta  ( m+ \textrm i s): m, s \in {\mathbb Z} \right \}.
$$
Then
\begin{align}\label{newlattice}
	\Lambda= \bigcup_{k=1}^{K^{2}} \Lambda_k
\end{align}
 with $\Lambda_j\cap \Lambda_k
	= \emptyset$ if $j\neq k$, and with
$
 |a-b|\ge K\delta  \ \textrm {if}\ a, b \in \Lambda_k\ {\rm and}\ a\ne b.
$


\begin{theorem}
Let $f\in \mathcal S$ and suppose that $\varphi\in C^2(\mathbb{C})$ is real-valued with $\mathrm{i} \partial \bar{\partial} \varphi \simeq \omega_0$, where $\omega_0=i\partial\bar{\partial}|z|^2$ is the Euclidean-K\"{a}hler form on $\mathbb{C}$. Then for any $0<p<\infty$, $H_f\in S^{p,\infty}(F^2(\varphi) \to L^2(\varphi))$ if and only if $f\in \mathrm{IDA}^{p,\infty}(\mathbb{C})$.
\end{theorem}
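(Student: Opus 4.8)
The plan is to establish the equivalence $H_f\in S^{p,\infty}(F^2(\varphi)\to L^2(\varphi)) \Longleftrightarrow f\in \mathrm{IDA}^{p,\infty}(\mathbb{C})$ by adapting the machinery of Theorem \ref{main}, but replacing the trace estimates with \emph{weak-type} (i.e. $L^{p,\infty}$, respectively $S^{p,\infty}$) estimates at each step. First I would observe that the same $\bar\partial$-decomposition $f=f_1+f_2$ furnished by Lemma \ref{decomposelemma} applies verbatim in the weighted Fock setting (with $\tau_\omega\asymp 1$, $\delta_\omega=+\infty$), giving $f_1\in C^\infty(\mathbb{C})$ and
$$
|\bar\partial f_1(z)|+M_{\delta}(|\bar\partial f_1|)(z)+M_{\delta}(f_2)(z)\lesssim G_{B\delta}(f)(z)
$$
for a suitable $B$. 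Via Corollary \ref{keycoro} and the inequality $\|H_{f_2}g\|\leq \|M_{|f_2|}g\|$, we still obtain $H_{f_j}^*H_{f_j}\lesssim T_{\mu_{f_j}}$ with $d\mu_{f_1}=|\bar\partial f_1|^2 dA$ and $d\mu_{f_2}=|f_2|^2 dA$, so by Weyl's Lemma $s_n(H_{f_j})^2\lesssim \lambda_n(T_{\mu_{f_j}})$. The key reduction is therefore: $T_\mu\in S^{p/2,\infty}$ if and only if the sequence $\{\mu(R_n)/A(R_n)\}_n$ lies in the weak $\ell^{p/2}$ space over the lattice $\{R_n\}$; this is the weak-type analogue of \cite[Theorem 4.5]{MR4516170} and should follow from the same averaging/atomic-decomposition argument for Toeplitz operators on $F^2(\varphi)$, using the concrete split lattice \eqref{newlattice} to diagonalize (up to finite multiplicity $K^2$) the relevant block operators.

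Next I would run the two directions. For the sufficiency direction ($f\in \mathrm{IDA}^{p,\infty}$ implies $H_f\in S^{p,\infty}$): from $G_{\delta,\omega}(f)\in L^{p,\infty}(d\lambda_\omega)$ and the pointwise domination above, the sampled sequences $\{M_\delta(|\bar\partial f_1|)(z_n)\}$ and $\{M_\delta(f_2)(z_n)\}$ are controlled by $\{G_{B\delta}(f)(z_n)\}$, whose non-increasing rearrangement is $\lesssim (G_{\delta,\omega}(f))^*$ up to the finite-multiplicity constant; hence $\{\mu_{f_j}(R_n)/A(R_n)\}\in \ell^{p/2,\infty}$, so $T_{\mu_{f_j}}\in S^{p/2,\infty}$, so $s_n(H_{f_j})\lesssim (1+n)^{-1/p}$, and finally $s_{2k+j}(H_f)\leq s_{k+j}(H_{f_1})+s_{k+1}(H_{f_2})$ (as in \eqref{plislem}) combined with the doubling of $(1+n)^{1/p}$ gives $H_f\in S^{p,\infty}$. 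For the necessity direction: exactly as in inequality \eqref{uselater} and Step 2 of Theorem \ref{main}, one has $G_{B_\omega\delta,\omega}(f)(z_n)\lesssim \|\chi_{B_\omega R_n}H_f k_{z_n}\|_{L^2(\varphi)}$, and the lower-bound lemma (Lemma \ref{lower estimatekey}, valid for $f\in VDA_\omega$) gives $\{\|\chi_{B_\omega R_n}H_fk_{z_n}\|\}_n$ controlled in any symmetric sequence norm by $\{s_n(H_f)\}$; taking the $\ell^{p,\infty}$ quasi-norm yields $\{G_{\delta,\omega}(f)(z_n)\}\in \ell^{p,\infty}$, and since $G_{\delta,\omega}(f)(z)\lesssim G_{B_\omega\delta,\omega}(f)(z_n)$ for $z\in R_n$ plus finite multiplicity, this upgrades to $G_{\delta,\omega}(f)\in L^{p,\infty}(d\lambda_\omega)$, i.e. $f\in \mathrm{IDA}^{p,\infty}(\mathbb{C})$. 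One should also note that $f\in \mathrm{IDA}^{p,\infty}$ automatically forces $f\in VDA_\omega$ (since $G_{\delta,\omega}(f)\in L^{p,\infty}(d\lambda_\omega)$ implies $G_{\delta,\omega}(f)(z)\to 0$ as $z\to\infty$ along the lattice, and continuity/finite multiplicity promotes this to $\partial_\infty\Omega$), so $H_f$ is compact and the singular values are well-defined in both directions.

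The main obstacle I anticipate is the weak-type Toeplitz reduction, namely proving cleanly that $T_\mu\in S^{p/2,\infty}$ iff $\{\mu(R_n)/A(R_n)\}_n\in \ell^{p/2,\infty}$ over a lattice. The upper bound (domination of $T_\mu$ by a sum of finitely many block-diagonal operators whose singular values are comparable to the averaged masses) is the same as in the strong-$S^p$ theory, but one must be careful that the $S^{p,\infty}$ quasi-norm is only a quasi-norm, so the triangle-type inequality $s_{m+n}(A+B)\leq s_m(A)+s_n(B)$ and the standard $S^{p,\infty}$ quasi-triangle inequality (with a $p$-dependent constant, as in the references \cite{LSZ1,LSZ2}) must be invoked rather than genuine subadditivity; the split into $K^2$ pieces in \eqref{newlattice} keeps the number of summands fixed so the quasi-norm constants do not blow up. The lower bound requires testing $T_\mu$ against the normalized reproducing kernels $k_{z_n}$ for a \emph{separated} sublattice and using $\langle T_\mu k_{z_n},k_{z_n}\rangle\gtrsim \mu(R_n)/A(R_n)$ together with the almost-orthogonality of $\{k_{z_n}\}$ to extract a diagonal piece; converting this into an $\ell^{p/2,\infty}$ statement is routine once the reduction framework is set up. A secondary, more bookkeeping-type subtlety is verifying that all the ``$\lesssim$'' constants coming from finite multiplicity and from passing between $G_{\delta}$ with different radii interact correctly with the rearrangement inequalities (\cite[Proposition 1.4.5]{MR3243734}), but these are exactly the same manipulations already used in the proof of Theorem \ref{main} \eqref{AAAAA} and Theorem \ref{main} \eqref{BBBBB}, now with $L^{p,\infty}$ in place of $L^p$.
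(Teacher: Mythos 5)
Your sufficiency direction is essentially sound, but it is also essentially redundant: it amounts to re-deriving Theorem \ref{main0}\eqref{BBBBB} with $\rho(x)=(1+x)^{1/p}$, which is exactly how the paper obtains that implication (and you only ever need the upper-bound half of your proposed weak-type Toeplitz equivalence, which does follow from the convex-function estimates of \cite[Theorem 4.5]{MR4516170} together with Lemma \ref{finelemma}). The genuine gap is in the necessity direction for $0<p\le 1$. Lemma \ref{lower estimatekey} gives only the family of inequalities $\sum_n h(\|\chi_{B_\omega R_n}H_fk_{z_n}\|_{L^2_\omega})\lesssim \sum_n h(s_n(H_f))$ for increasing convex $h$ with $h(0)=0$; this is a submajorization statement, and submajorization controls a sequence only in \emph{fully} symmetric (interpolation) norms for the couple $(\ell^1,\ell^\infty)$. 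The $\ell^{p,\infty}$ quasi-norm with $p\le 1$ is symmetric but \emph{not} fully symmetric, so your assertion that the lemma yields control ``in any symmetric sequence norm'' fails exactly in the range where it is needed. Concretely, for $p=1$ take $b_k=(k+1)^{-1}$ and $a_k=N^{-1}\log(N+1)$ for $0\le k<N$, $a_k=0$ otherwise: then $\sum_{k\le n}a_k\le\sum_{k\le n}b_k$ for every $n$ (hence $\sum_k h(a_k)\le\sum_k h(b_k)$ for every admissible convex $h$), yet $\|a\|_{\ell^{1,\infty}}=\log(N+1)$ while $\|b\|_{\ell^{1,\infty}}=1$; an analogous example works for every $0<p<1$. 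This obstruction is precisely why Theorem \ref{main0}\eqref{AAAAA} carries the restriction $\gamma\in(0,1)$, i.e. $p>1$: for $p>1$ your necessity argument is correct but is already contained in Theorem \ref{main0}\eqref{AAAAA}, while for $p\le1$ it cannot be repaired along these lines.

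For comparison, the paper's necessity proof for $0<p\le1$ bypasses Lemma \ref{lower estimatekey} entirely. It tests $H_f$ between explicit finite-rank operators built over a $K\delta$-separated sublattice: with $A_\Theta=\sum_a e_a\otimes g_a$ and $T_\Theta=\sum_a k_a\otimes e_a$ one writes $A_\Theta H_fT_\Theta=Y_\Theta+Z_\Theta$, where the diagonal part satisfies $\|Y_\Theta\|_{S^{p,\infty}}\ge\kappa\,\|\{G_\delta(f)(a)\}\|_{\ell^{p,\infty}}$, and the off-diagonal part is estimated by $\|Z_\Theta\|_{S^{p,\infty}}\lesssim e^{-\theta\delta K/2}\|\{G_\delta(f)(a)\}\|_{\ell^{p,\infty}}$ using the exponential decay $|\langle H_fk_\tau,g_a\rangle|\lesssim e^{-\theta|a-\tau|}G_\delta(f)(a)$ together with real interpolation to upgrade the $\ell^p\to S^p$ boundedness of the coefficient-to-operator map to $\ell^{p,\infty}\to S^{p,\infty}$; choosing $K$ large then absorbs $Z_\Theta$ and yields $\|\{G_\delta(f)(a)\}\|_{\ell^{p,\infty}}\lesssim\|H_f\|_{S^{p,\infty}}$, which is finally converted to the continuous $L^{p,\infty}$ bound. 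Some argument of this diagonal/off-diagonal absorption type is needed to close the case $0<p\le1$.
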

\begin{proof}
By Theorem \ref{main} \eqref{BBBBB}, it remains to show that $H_f\in S^{p,\infty}(F^2(\varphi) \to L^2(\varphi))$ implies $f\in \mathrm{IDA}^{p,\infty}(\mathbb{C})$, where $0<p\leq1$. We will achieve this by borrowing an idea from the proof of \cite[Theorem 1.1]{MR4402674} (see also \cite[Proposition~6.8]{MR3770363}). To begin with, recall from \cite[Lemma 2.2]{MR4402674} that there exists positive constants $C$ and $\delta_0 $ such that the reproducing kernel $K$ of $F^2(\varphi)$ satisfies
\begin{equation}\label{basic-est-b}
	\left|K(z,w)\right|\geq C e^{ \frac{1}{2}(\varphi(z)+\varphi(w))}
\end{equation}
for $z\in\mathbb{C}$ and  $w\in D\left(z, \delta_0\right)$.
Now we let $\delta\in (0, \delta_0)$, and let $\Lambda$ be a $\delta$-lattice, and decompose $\Lambda= \bigcup_{k=1}^{K^{2}} \Lambda_k$ as in \eqref{newlattice} with $K\ge 2$  such that $D(a , \delta)\cap D(b , \delta)= \emptyset$ if $a\neq b$ and
$a, b\in \Lambda _k$.

For simplicity, we denote by $P$ the orthogonal projection of $L^2(\varphi)$ onto $F^2(\varphi)$. For $a\in \mathbb{C}$ and  $\delta>0$, let  $A^2(D(a, \delta), e^{-\varphi} dA)$ be the weighted Bergman space of all holomorphic functions in the space $L^2(D(a, \delta), e^{-\varphi} dA)$. We denote by $P_{a, \delta}$ the orthogonal projection of $L^2(D(a, \delta), e^{-\varphi} dA)$ onto $A^2(D(a,\delta), e^{-\varphi} dA)$. Given $f\in L^2(D(a, \delta), e^{-\varphi} dA)$, we extend $P_{a, \delta}(f)$ to $\mathbb{C}$ by setting
$
	P_{a, \delta}(f)|_{\mathbb{C}\backslash D(a, \delta)} =0.
$
It follows from \cite[Theorem 4.4]{MR4402674} that $fk_a-P(fk_a) \in L^2_{\mathrm{loc}}(\mathbb{C})$. This, together with a trivial fact that $P(fk_a) \in {\rm Hol}(\mathbb{C})$, implies that
 $
 fk_a\in L^2(D(a, \delta), e^{-\varphi}dA)$ and $P_{a, r}(fk_a)\in A^2(D(a,\delta), e^{-\varphi} dA).
 $
 Hence, we have $\chi_{D(a, \delta)} fk_{a}-P_{a, \delta} (fk_{a})\in L^2(\varphi)$.  Next, for $a\in \Lambda_k$, we define
\begin{equation*}
g_a:= \begin{cases}
	\frac {\chi_{D(a , \delta)} fk_{a}-P_{a, \delta} (fk_{a })}{\|\chi_{D(a, \delta)} fk_{a}-P_{a, \delta} (fk_{a})\|_{L^2(\varphi)}},
	& \textrm { if } \|\chi_{D(a, \delta)} fk_{a}-P_{a, \delta} (fk_{a})\|_{L^2(\varphi)}\neq 0, \\
	0,  & \textrm { if } \|\chi_{D(a, \delta)} fk_{a}-P_{a, \delta} (fk_{a})\|_{L^2(\varphi)}= 0.
\end{cases}
\end{equation*}
Then the sequence $\{g_a\}_{a\in\Lambda_k}$ satisfies that $\|g_a\|_{L^2(\varphi)}\le 1$ and $\langle g_{a}, g_{b} \rangle_{L^2(\varphi)}=0 $ if $D(a ,\delta)\cap D(b , \delta)= \emptyset$. Let $\Theta$ be any finite sub-collection of $\Lambda_k$,
and let $\{e_a\}_{a\in \Theta} $ be an orthonormal set of $L^2(\varphi)$. Let
$$
	A_\Theta:= \sum_{a\in \Theta} e_a\otimes g_a\quad {\rm and}\quad T_\Theta:= \sum_{a\in \Theta}  k_{a}\otimes e_a,
$$
where for any $U, V\in L^2(\varphi)$, $U\otimes V$ is a rank one operator on $L^2(\varphi)$ defined by
$$
	(U\otimes V)(g):= \langle g, V \rangle_{L^2(\varphi)} U,  \quad g\in L^2(\varphi).
$$
Then $A_\Theta$ is a finite-rank bounded operator on $L^2(\varphi)$ with operator norm bounded by $1$, and by \cite[Lemma 2.4]{MR4402674}, $T_\Theta$ is bounded from $L^2(\varphi)$ to $F^2(\varphi)$ since  $\Lambda$ is separated. Note that
\begin{equation}\label{AHT-a}
 	A_\Theta H_fT_\Theta= \sum_{a, \tau \in \Theta} \left \langle  H_f k_{ \tau}, g_a \right \rangle_{L^2(\varphi)} e_a\otimes e_\tau
	=:Y_\Theta+Z_\Theta,
\end{equation}
where
\begin{equation}\label{AHT}
	Y_\Theta:=  \sum_{a\in \Theta } \left \langle  H_f k_{a}, g_a \right\rangle_{L^2(\varphi)} e_a\otimes e_a,\ \
	Z_\Theta:= \sum_{a, \tau \in \Theta, \, a\neq\tau} \left \langle  H_f k_{ \tau}, g_a\right \rangle_{L^2(\varphi)} e_a\otimes e_\tau.
\end{equation}
By \cite[Lemma 5.1]{MR4402674} and inequality \eqref{basic-est-b},
\begin{align*}
	\left  \langle  H_f k_{a }, g_a \right \rangle_{L^2(\varphi)}
	&=\left \langle   f k_{a }-P(f k_{a}),   g_a \right \rangle_{L^2(\varphi)}\\
	&=\left \langle \chi_{D(a  , \delta)}   f k_{a }-P_{a , \delta}(f k_{a }), g_a \right \rangle_{L^2(\varphi)}\\
	&=\left \|  \chi_{D(a, \delta)}   f k_{a }-P_{a ,\delta}(f k_{a }) \right\|_{L^2(\varphi)}\\
	&\gtrsim\left \|   f-\frac 1 {  k_{a } }P_{a , \delta}(f k_{a }) \right\|_{L^2(D(a , \delta), dA)}\\
&\gtrsim G_{\delta}(f)( a ).
\end{align*}
Therefore, there is a constant $\kappa$ independent of $f$ and $\Theta$ such that
\begin{equation}\label{Y-est-a}
	\|Y_\Theta\|_{S^{p,\infty}}=\left\|\left\{\left  \langle  H_f k_{a }, g_a \right \rangle_{L^2(\varphi)}\right\}_{a\in\Theta}\right\|_{\ell^{p,\infty}}
	\geq   \kappa \|\{G_{\delta}(f)( a )\}_{a\in \Theta}\|_{\ell^{p,\infty}}.
\end{equation}
To handle the off-diagonal operator $Z_\Theta$, we first consider the linear operator $G_\Theta$ given by
$$G_\Theta:\{\alpha_{a,\tau}\}_{a,\tau\in\Theta}\rightarrow \sum_{a,\tau\in\Theta}\alpha_{a,\tau}e_a\otimes e_{\tau}.$$
Then we apply \cite[Proposition 1.29]{MR2311536} to see that $G_\Theta$ is bounded from $\ell^p$ to $S^p$ with operator norm bounded by $1$. This, in combination with real interpolation (see \cite[Theorems 3.11.8 and 5.2.1]{Bergh}), implies that $G_\Theta$ is bounded from $\ell^{p,\infty}$ to $S^{p,\infty}$ with operator norm bounded by $1$. In particular,
\begin{align}\label{Z-est}
	\left\|Z_\Theta\right\|_{S^{p,\infty}}
	\leq   \left\|\left\{\left \langle  H_f k_{ \tau}, g_a\right \rangle_{L^2(\varphi)}\right\}_{a, \tau \in \Theta}\right\|_{\ell^{p,\infty}}.
\end{align}
To continue, we let $Q_{a, \delta}$ be the Bergman projection of $L^2(D(a, \delta), dA)$ onto the Bergman space $A^2(D(a, \delta), dA)$. Then
$$
	k_\tau Q_{a, \delta}(f)\in A^2(D(a, \delta), dA)= A^2(D(a,\delta), e^{-\varphi} dA).
$$
Moreover, $fk_\tau -P_{a, \delta}(fk_\tau)$ and $P_{a,\delta}(fk_\tau)- k_\tau Q_{a, \delta}f $ are orthogonal in $L^2(D(a, \delta), e^{-\varphi} dA)$.  Hence, we apply Parseval's identity to deduce that for $a, \tau\in \mathbb{C}$,
$$
	\left\|  fk_\tau -P_{a, \delta}(fk_\tau)\right\|_{L^2(D(a, \delta), e^{-\varphi} dA)}
	\le \left\|  fk_\tau -k_\tau Q_{a, \delta}f\right\|_{L^2(D(a, \delta), e^{-\varphi} dA)}.
$$
This, in combination with \cite[Lemma 5.1]{MR4402674} and \cite[(2.4)]{MR4402674}, yields
\begin{align*}
	\left| \left \langle  H_f k_{ \tau}, g_a \right\rangle_{L^2(\varphi)} \right|
	&=\left| \left \langle   f k_{\tau} -P \left( f k_{ \tau}  \right), g_a  \right\rangle_{L^2(\varphi)} \right|\\
	&=\left| \left \langle  \chi_{D(a, \delta)} f k_{ \tau} -P_{a , \delta} \left( f k_{ \tau}\right), g_a \right\rangle_{L^2(\varphi)} \right|\\
	&\le\left\|  f k_{ \tau}-P_{a,\delta}  \left( f k_{ \tau} \right) \right\|_{L^2(D(a, \delta), e^{-\varphi} dA)}\\
	&\le\left\|  f k_{ \tau}- k_{ \tau} Q_{a, \delta}  \left(f\right) \right\|_{L^2(D(a, \delta), e^{-\varphi} dA)}\\
	&\le\sup_{\xi\in D(a, \delta)} \left|k_{ \tau}(\xi)e^{-\frac{\varphi}{2}}\right |\, \left\|  f-  Q_{a , \delta}  \left( f  \right) \right\|_{L^2(D(a, \delta), dA)}\\
	&\le C e^{-\theta | a -  \tau|} \left\| f- Q_{a , \delta}  \left( f  \right) \right\|_{L^2(  D(a, \delta), dA)}\\
&=Ce^{-\theta | a -  \tau|}G_{\delta}(f)(a).
\end{align*}
Therefore, we have
\begin{align}\label{rhssofff}
\left\|\left\{\left \langle  H_f k_{ \tau}, g_a\right \rangle_{L^2(\varphi)}\right\}_{a, \tau \in \Theta}\right\|_{\ell^{p,\infty}}
&\lesssim \left\|\left\{e^{-\theta | a -  \tau|}G_{\delta}(f)(a)\right\}_{a, \tau \in \Theta}\right\|_{\ell^{p,\infty}}\nonumber\\
&\lesssim e^{-\frac{1}{2}\theta \delta K}\left\|\left\{e^{-\frac{\theta}{2} | a -  \tau|}G_{\delta}(f)(a)\right\}_{a, \tau \in \Theta}\right\|_{\ell^{p,\infty}}.
\end{align}
We will apply again a real interpolation argument to estimate the $\ell^{p,\infty}$ norm of $\left\{e^{-\frac{\theta}{2} | a -  \tau|}G_{\delta}(f)(a)\right\}_{a, \tau \in \Theta}$. To this end, given a sequence $\gamma:=\{\gamma_{a,\tau}\}_{a,\tau\in\Theta}$, we consider the linear operator $H_{\Theta,\gamma}$ given by
\begin{align*}
H_{\Theta,\gamma}:\{\beta_{a}\}_{a\in\Theta}\rightarrow \{\beta_{a}\gamma_{a,\tau}\}_{a,\tau\in\Theta}.
\end{align*}
Note that $H_{\Theta,\gamma}$ is bounded from $\ell^p(\Theta)$ to $\ell^p(\Theta\times \Theta)$ for any $0<p\leq1$ with operator norm bounded by $\sup\limits_{\tau\in\Theta}\left(\sum_{a\in\Theta}|\gamma_{a,\tau}|^p\right)^{1/p}$. Then by real interpolation (see \cite[Theorems 3.11.8 and 5.2.1]{Bergh}), it is also bounded from $\ell^{p,\infty}(\Theta)$ to $\ell^{p,\infty}(\Theta\times \Theta)$ for any $0<p\leq 1$ with the same operator norm. Using this fact, we see that the right-hand side of \eqref{rhssofff} is dominated by
\begin{align}\label{cb2356}
&Ce^{-\frac{1}{2}\theta \delta K}\sup\limits_{a\in \Theta}\left(\sum_{\tau\in \Theta,\tau\neq \theta}e^{-\frac{p\theta}{2}|a-\tau|}\right)^{1/p}\left\|\left\{G_{\delta}(f)(a)\right\}_{a \in \Theta}\right\|_{\ell^{p,\infty}}\nonumber\\
&\leq Ce^{-\frac{1}{2}\theta \delta K}\sup\limits_{a\in \Theta}\left(\sum_{\tau\in \Theta,\tau\neq \theta}\int_{D(\tau,\delta)}e^{-\frac{p\theta}{2}|a-\xi|}dA(\xi)\right)^{1/p}\left\|\left\{G_{\delta}(f)(a)\right\}_{a \in \Theta}\right\|_{\ell^{p,\infty}}\nonumber\\
&\leq Ce^{-\frac{1}{2}\theta \delta K} \left\|\left\{G_{\delta}(f)(a)\right\}_{a \in \Theta}\right\|_{\ell^{p,\infty}}
\end{align}
for some constant $C>0$, where in the next to the last inequality we used the fact that  $D(  {\tau} , \delta)\cap D( a , \delta)= \emptyset$ for every $a, \tau\in \Theta$ with $a\neq \tau$. Combining inequalities \eqref{Z-est}, \eqref{rhssofff} and \eqref{cb2356}, we conclude that
\begin{align*}
\left\|Z_\Theta\right\|_{S^{p,\infty}}
	\lesssim e^{-\frac{1}{2}\theta \delta K} \left\|\left\{G_{\delta}(f)(a)\right\}_{a \in \Theta}\right\|_{\ell^{p,\infty}}.
\end{align*}
We choose $K$ to be a sufficiently large constant so that
\begin{equation}\label{Z-est-z}
\left \|Z_\Theta\right\|_{S^{p,\infty}}  \le \frac {\kappa} {2^{1+\frac1p}} \left\|\left\{G_{\delta}(f)(a)\right\}_{a \in \Theta}\right\|_{\ell^{p,\infty}}.
\end{equation}
Combining this inequality with \eqref{Y-est-a} and the quasi-triangular inequality
$$
	\left\| Y_\Theta\right \|_{S^{p,\infty}}\leq 2^{1/p} \left \| A_\Theta H_fT_\Theta\right\|_{S^{p,\infty}}+  2^{1/p}\left\| Z_\Theta\right\|_{S^{p,\infty}},
$$
we deduce that
$$
	\kappa \|\{G_{\delta}(f)( a )\}_{a\in \Theta}\|_{\ell^{p,\infty}}\le 2^{1/p} \left \| A_\Theta H_fT_\Theta\right \|_{S^{p,\infty}} + \frac {\kappa} 2  \|\{G_{\delta}(f)( a )\}_{a\in \Theta}\|_{\ell^{p,\infty}}.
$$
Since $\Theta$  is finite,  we have
\begin{align}\label{AHf-T-norm}
	\|\{G_{\delta}(f)( a )\}_{a\in \Theta}\|_{\ell^{p,\infty}}
\lesssim \left\| A_\Theta H_fT_\Theta\right\|_{S^{p,\infty}}
\lesssim  \| A_\Theta\|_{L^2(\varphi)\to L^2(\varphi)} \| H_f  \|_{S^{p,\infty}}  \|T_\Theta\|_{L^2(\varphi)\to F^2(\varphi)}
	\lesssim\left \| H_f \right \|_{S^{p,\infty}}.
\end{align}
Since the implicit constant above is independent of $f$ and $\Theta$, we have
\begin{align}\label{G-f-est-a}
	\|\{G_{\delta}(f)( a )\}_{a\in \Lambda_k}\|_{\ell^{p,\infty}}
\lesssim \left \| H_f \right \|_{S^{p,\infty}}.
\end{align}
Next, we take $\Lambda$ to be a $\frac \delta {2}$-lattice similar to \eqref{newlattice},  which can be viewed as a union of $4$ $\delta$-lattice. Without loss of generality, we assume that $(G_\delta(f))^*(j)$ is a strictly decreasing sequence (this can be achieved by considering a new strictly decreasing sequence $\tilde{G}_\delta(f)(j)$ such that $\tilde{G}_\delta(f)(j)\asymp (G_\delta(f))^*(j)$).
\begin{align*}
\|G_{\frac{\delta}{2}}(f)\|_{L^{p,\infty}(\mathbb{C})}
&\leq \sup\limits_{\lambda>0}\lambda\left(\sum_{a\in\Lambda}A\left\{z\in D(a,\frac{\delta}{2}):G_{\frac{\delta}{2}}(f)(z)>\lambda\right\}\right)^{1/p}\\
&\lesssim \sup\limits_{\lambda>0}\lambda\left(\sum_{a\in\Lambda}A\left\{z\in D(a,\frac{\delta}{2}):G_{\delta}(f)(a)>\lambda\right\}\right)^{1/p}\\
&=C\sup\limits_{j\geq 0}\sup\limits_{G_\delta(f)^*(j+1)\leq \lambda<G_{\delta}(f)^*(j)}\lambda\left(\#\{a\in\Lambda:G_\delta(f)(a)>\lambda\}\right)^{1/p}\\
&\leq C\sup\limits_{j\geq 0}\sup\limits_{G_\delta(f)^*(j+1)\leq \lambda<G_{\delta}(f)^*(j)}\lambda\left(\#\{a\in\Lambda:G_\delta(f)(a)>G_\delta(f)^*(j+1)\}\right)^{1/p}\\
&\leq C\sup\limits_{j\geq 0}(1+j)^{1/p}G_{\delta}(f)^*(j)=C\|\{G_{\delta}(f)( a )\}_{a\in \Lambda}\|_{\ell^{p,\infty}}\lesssim \left \| H_f \right \|_{S^{p,\infty}}.
\end{align*}
This implies that $f\in IDA^{p,\infty}(\mathbb{C})$.
\end{proof}
\section{Relationship between two characterization functions}\label{relationsection}
Hankel operators associated with anti-analytic symbols are of independent interest in the theory of analytic function (see e.g. \cite{MR970119,MR850538,MR4413302,MR3010276,MR2541276,MR3246989,MR2040916}). The characterizations of their boundedness, compactness, Schatten-$p$ membership and the asymptotic behavior of their singular values are closely linked to the characterization function $\tau_\omega|f'|$. Firstly, this characterization function arises naturally in the definition of the (weighted) Bloch space $\mathcal{B}$, (weighted) little Bloch space $\mathcal{B}_0$ and (weighted) analytic Besov space $\mathcal{B}_p$. The definitions of these spaces are given as follows.
\begin{align*}
&\mathcal{B}^\omega:=\Big\{f\in {\rm Hol}(\Omega):\ \sup_{z\in\Omega}\tau_\omega(z)|f'(z)|<+\infty\Big\},\\
&\mathcal{B}^\omega_0:=\Big\{f\in {\rm Hol}(\Omega):\ \lim_{z\rightarrow \partial_\infty\Omega}\tau_\omega(z)|f'(z)|=0\Big\},\\
&\mathcal{B}^\omega_p:=\Big\{f\in {\rm Hol}(\Omega):\ \tau_\omega|f'|\in L^p(\Omega,d\lambda_\omega)\Big\}.
\end{align*}
Let $f\in {\rm Hol}(\Omega)$. In the setting of standard Bergman space $A_\alpha^2$ where $\omega$ satisfies $\tau_\omega(z)\asymp 1-|z|^2$, J. Arazy, S. Fisher and J. Peetre \cite{MR970119} showed that $H_{\bar{f}}$ is bounded (resp. compact) on $A^2_\alpha$ if and only if $f\in\mathcal{B}^\omega$ (resp. $\mathcal{B}^\omega_0$). These two characterizations were first established by S. Axler \cite{MR850538} in the case $\alpha=0$. The authors in \cite{MR970119} also showed that  $H_{\bar{f}}\in S^p$ if and only if $f\in\mathcal{B}^\omega_p$, where $1<p<\infty$. These three parts were later  extended to the setting of Fock spaces in \cite{MR3010276} and $W^*(\mathbb{D})$-weighted Bergman spaces in \cite{MR4413302}. Furthermore, the function $\tau_\omega|f'|$ is used in \cite{MR4413302} to characterize the asymptotic behavior of $s_n(H_{\bar{f}})$ in the form \eqref{firstequi}. Building on the aforementioned results, we now investigate the relationship between the characterization functions $\tau_\omega|f'|$ and  $G_{\delta,\omega}(f)$, establishing connections between the ${\rm IDA}$ space (resp. ${\rm BDA}$ space, ${\rm VDA}$ space, the asymptotic behavior of $(G_{\delta,\omega}(f))^*$ and the $\mathcal{B}_p^\omega$ space (resp. $\mathcal{B}^\omega$ space, $\mathcal{B}_0^\omega$ space, the asymptotic behavior of $(\tau_\omega|\bar{f}'|)^*$. Specifically, we will show that
\begin{enumerate}
  \item If $f$ is an anti-analytic symbol, then the characterizations formulated in terms of $G_{\delta,\omega}(f)$ are equivalent to those in terms of $\tau_\omega|\bar{f}'|$. Therefore, Theorems \ref{boundedness},  \ref{compactness} and \ref{SpandSpinfty} are compatible to the characterizations established in \cite{MR4413302};
  \item If $f\in\mathcal{S}$, then the characterizations formulated in terms of $G_{\delta,\omega}(f)$ may no longer be equivalent to those in terms of $\tau_\omega|\partial_z\bar{f}|$, where $\partial_z$ is understood in the weak sense. This suggests that in the general setting $f\in\mathcal{S}$, $G_{\delta,\omega}(f)$ can be regarded as a suitable substitute of $\tau_\omega|\bar{f}'|$.
\end{enumerate}
Inspired by \cite[Lemma 3.3]{MR4668087}, we show in the following lemma that for anti-analytic symbol $\bar{\phi}$, $G_{\delta,\omega}(\bar{\phi})(z)$ has a simpler expression.

	\begin{lemma}\label{attain}
Let $\omega \in \mathcal{W}^\ast(\Omega)$, $\delta\in (0,B_\omega\delta_\omega)$ and $\phi\in {\rm Hol}(\Omega)$, then
$$G_{\delta,\omega}(\bar{\phi})(z)=MO_{\delta,\omega} (\phi)(z),\quad z\in\Omega.$$
\end{lemma}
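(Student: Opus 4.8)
The plan is to prove the two inequalities $G_{\delta,\omega}(\bar\phi)(z) \le MO_{\delta,\omega}(\phi)(z)$ and $MO_{\delta,\omega}(\phi)(z) \le G_{\delta,\omega}(\bar\phi)(z)$ separately. The first is immediate: taking the particular competitor $h = \overline{\widehat{\phi}_\delta(z)}$ (a constant, hence holomorphic on the disk) in the infimum defining $G_{\delta,\omega}(\bar\phi)(z)$ gives
$$
G_{\delta,\omega}(\bar\phi)(z) \le \left(\fint_{D(z,\delta\tau_\omega(z))} |\bar\phi - \overline{\widehat{\phi}_\delta(z)}|^2\,dA\right)^{1/2} = \left(\fint_{D(z,\delta\tau_\omega(z))} |\phi - \widehat{\phi}_\delta(z)|^2\,dA\right)^{1/2} = MO_{\delta,\omega}(\phi)(z).
$$
So the substance is the reverse inequality.

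For the reverse direction, fix $z$ and write $D := D(z,\delta\tau_\omega(z))$. Let $h \in \mathrm{Hol}(D)$ be an arbitrary competitor in the definition of $G_{\delta,\omega}(\bar\phi)(z)$. The key observation, as in \cite[Lemma 3.3]{MR4668087}, is that $\bar\phi - h = \overline{\phi} - h$ and that $MO$ measures deviation from the \emph{average}, which minimizes $L^2$-distance to constants. I would split $h = \overline{a} + (h - \overline{a})$ where $a := \widehat{\phi}_\delta(z)$, but more efficiently: since $MO_{\delta,\omega}(\phi)(z)$ equals the $L^2(D, \frac{dA}{A(D)})$-distance from $\phi$ to the subspace of constants, and since $\overline{h}$ is anti-holomorphic on $D$, I want to compare $\|\phi - a\|$ with $\|\phi - \overline{h}\|$ (all norms normalized $L^2$ over $D$). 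The decomposition $\phi - a = (\phi - \overline{h}) + (\overline{h} - a)$ shows $\overline{h} - a$ is anti-holomorphic. Now $\phi - a$ has mean zero over $D$ by definition of $a$; moreover a mean-zero holomorphic function and an anti-holomorphic function that is orthogonal... — the cleaner route is to use the mean value property: because $\overline{h}$ is anti-holomorphic, $\fint_D \overline{h}\,dA = \overline{h(z)}$, and for $\phi$ holomorphic, $\fint_D \phi\,dA = \phi(z) = a$. I would then compute, using that $\phi - \overline{h}$ decomposes into orthogonal holomorphic and anti-holomorphic pieces plus the constant, that
$$
\fint_D |\phi - \overline{h}|^2\,dA = \fint_D |\phi - a|^2\,dA + \fint_D |\overline{h} - \overline{h(z)}|^2\,dA + |a - \overline{h(z)}|^2 \ge \fint_D |\phi - a|^2\,dA,
$$
where the cross terms vanish by orthogonality of holomorphic mean-zero functions, anti-holomorphic mean-zero functions, and constants in $L^2(D, dA)$ (this orthogonality on a disk is a standard fact, provable by expanding in the monomial basis $\{(w-z)^k\}$ and $\{\overline{(w-z)}^k\}$). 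Hence $\fint_D |\bar\phi - h|^2\,dA = \fint_D |\phi - \overline{h}|^2\,dA \ge MO_{\delta,\omega}(\phi)(z)^2$. Taking the infimum over $h \in \mathrm{Hol}(D)$ yields $G_{\delta,\omega}(\bar\phi)(z) \ge MO_{\delta,\omega}(\phi)(z)$.

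The main obstacle is making the orthogonality argument rigorous when $h$ is only assumed holomorphic on the \emph{open} disk $D$ (so $h$ and $\overline{h}$ need not be in $L^2(D)$ a priori); one restricts to competitors with $\overline{h} \in L^2(D, dA)$, which suffices since the infimum is unchanged — any $h$ with infinite $L^2$ norm gives an infinite value and is irrelevant, and for finite-norm $h$ one can approximate by dilations $h_r(w) = h(z + r(w-z))$ with $r < 1$, which are holomorphic on a neighborhood of $\overline{D}$, apply the orthogonality there, and pass to the limit via monotone/dominated convergence. Alternatively, one invokes the orthogonal decomposition $L^2(D, dA) = A^2(D, dA) \oplus \overline{A^2_0(D,dA)} \oplus \cdots$ directly; I would use whichever of these is already implicitly available from the cited \cite[Lemma 3.3]{MR4668087} to keep the argument short.
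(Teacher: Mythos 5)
Your proposal is correct and rests on the same key fact as the paper's proof: by the area mean value property, $(\phi-\phi(z))h$ is holomorphic and vanishes at the center, so the cross terms between the mean-zero holomorphic part, the mean-zero anti-holomorphic part, and the constants vanish (the paper packages this as the single identity $\fint_D(\phi-\phi(z))h\,dA=0$ and then expands $|\bar\phi-\bar\phi(z)-h|^2$, rather than as a three-way orthogonal decomposition). Your extra care about competitors $h\notin L^2(D)$ is a point the paper leaves implicit, but otherwise the two arguments are essentially identical.
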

\begin{proof}	
To begin with, note that for all $h\in \text{Hol}(D(z,\delta \tau_\omega(z)))$, it follows from the mean value property of $(\phi-\phi(z))h$ that
\begin{align*}
			&\int_{D(z,\delta \tau_\omega(z))}({\phi}-{\phi}(z)){h}\,dA=({\phi}(z)-{\phi}(z)){h}(z)=0,\\
			&\int_{D(z,\delta \tau_\omega(z))}(\bar{\phi}-\bar{\phi}(z))\bar{h}\,dA=(\bar{\phi}(z)-\bar{\phi}(z))\bar{h}(z)=0.
		\end{align*}
Applying these two equalities and then expanding $|\bar{\phi}-\bar{\phi}(z)|^2$, we deduce that
		\begin{align*}
			MO_{\delta,\omega} (\bar{\phi})(z)^2&=\fint_{D(z,\delta \tau_\omega(z))}|\bar{\phi}-\bar{\phi}(z)|^2\,dA\\
			&= \inf_{h \in \text{Hol}(D(z,\delta \tau_\omega(z)))} \fint_{D(z,\delta \tau_\omega(z))}|\bar{\phi}-\bar{\phi}(z)-h|^2\,dA\\
			&=\inf_{h \in \text{Hol}(D(z,\delta \tau_\omega(z)))} \fint_{D(z,\delta \tau_\omega(z))}|\bar{\phi}-h|^2\,dA\\
&=G_{\delta,\omega}(\bar{\phi})(z)^2.
		\end{align*}
This, together with a trivial fact $MO_{\delta,\omega} (\bar{\phi})(z)=MO_{\delta,\omega} (\phi)(z)$ for all $z\in\Omega$, ends the proof of Lemma \ref{attain}.
\end{proof}
\begin{proposition}\label{compareProp}
Let $\omega \in \mathcal{W}^\ast(\Omega)$, $\delta\in(0,\delta_\omega)$ and $\phi\in {\rm Hol}(\Omega)$. Then
\begin{enumerate}
  \item $\bar{\phi}\in BDA_\omega(\Omega)$ if and only if $\phi\in \mathcal{B}^\omega$. Moreover, we have
  $$\displaystyle \|\bar{\phi}\|_{BDA_\omega(\Omega)}\asymp \displaystyle \|\phi\|_{\mathcal{B}^\omega};$$
  \item $\ \displaystyle\bar{\phi}\in VDA_\omega(\Omega)$ if and only if $\phi\in \mathcal{B}_0^\omega$.
\end{enumerate}
\end{proposition}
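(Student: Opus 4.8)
The plan is to deduce both equivalences from a two-sided comparison between $MO_{\delta,\omega}(\phi)$ and $\tau_\omega|\phi'|$, using Lemma \ref{attain}, which already gives $G_{\delta,\omega}(\bar\phi)(z)=MO_{\delta,\omega}(\phi)(z)$ for $\phi\in{\rm Hol}(\Omega)$. Since $\phi$ is holomorphic on $D(z,\delta\tau_\omega(z))\subset\Omega$, the area mean value property yields $\hat\phi_\delta(z)=\fint_{D(z,\delta\tau_\omega(z))}\phi\,dA=\phi(z)$, so
\[
MO_{\delta,\omega}(\phi)(z)^2=\fint_{D(z,\delta\tau_\omega(z))}|\phi(w)-\phi(z)|^2\,dA(w),
\]
and the whole proposition reduces to understanding how the mean oscillation of a holomorphic function over the disk $D(z,\delta\tau_\omega(z))$ is controlled by, and controls, $\tau_\omega(z)|\phi'(z)|$.

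First I would prove the pointwise lower bound $\tau_\omega(z)|\phi'(z)|\lesssim MO_{\delta,\omega}(\phi)(z)$. Writing $r=\delta\tau_\omega(z)$ and expanding $\phi$ in a power series about $z$, a short computation in polar coordinates gives the reproducing-type identity $\phi'(z)=\tfrac{2}{r^2}\fint_{D(z,r)}(\phi(w)-\phi(z))\overline{(w-z)}\,dA(w)$; applying $|w-z|\le r$ and the Cauchy--Schwarz inequality then yields $\tau_\omega(z)|\phi'(z)|\le\tfrac{2}{\delta}\,MO_{\delta,\omega}(\phi)(z)$. Combined with Lemma \ref{attain}, this at once gives $\|\phi\|_{\mathcal B^\omega}\lesssim\|\bar\phi\|_{BDA_\omega(\Omega)}$ (the forward implication of (1)) and the forward implication of (2).

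For the reverse estimate I would use the fundamental theorem of calculus along the segment $[z,w]\subset D(z,r)$: for $w\in D(z,r)$ one has $|\phi(w)-\phi(z)|\le|w-z|\sup_{\zeta\in D(z,r)}|\phi'(\zeta)|$, hence $MO_{\delta,\omega}(\phi)(z)\le\delta\,\tau_\omega(z)\sup_{\zeta\in D(z,\delta\tau_\omega(z))}|\phi'(\zeta)|$. By condition $\mathrm{(H)_4}$ (valid since $\delta<\delta_\omega<\eta/4$), $\tau_\omega(\zeta)\asymp\tau_\omega(z)$ uniformly over this disk, so the right-hand side is $\lesssim\sup_{\zeta\in D(z,\delta\tau_\omega(z))}\tau_\omega(\zeta)|\phi'(\zeta)|$. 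Taking the supremum over $z\in\Omega$ gives $\|\bar\phi\|_{BDA_\omega(\Omega)}\lesssim\|\phi\|_{\mathcal B^\omega}$, which finishes (1). For the reverse implication of (2): if $\phi\in\mathcal B_0^\omega$ then, given $\varepsilon>0$, there is a compact $E\subset\Omega$ with $\tau_\omega(\zeta)|\phi'(\zeta)|<\varepsilon$ for $\zeta\notin E$; using $\mathrm{(H)_3}$ one checks that $D(z,\delta\tau_\omega(z))$ is disjoint from $E$ once $z$ is close enough to $\partial_\infty\Omega$ (the disk either shrinks toward a finite boundary point or escapes to $\infty$), and hence $G_{\delta,\omega}(\bar\phi)(z)=MO_{\delta,\omega}(\phi)(z)\lesssim\varepsilon$ there, i.e. $\bar\phi\in VDA_\omega(\Omega)$.

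I expect the only delicate points to be bookkeeping rather than substance: checking that $D(z,\delta\tau_\omega(z))$, and a fixed dilate of it, lies inside $\Omega$ so that $\phi$ is holomorphic there and $\mathrm{(H)_4}$ applies, and verifying the ``escaping to $\partial_\infty\Omega$'' claim in the unbounded case, where $\mathrm{(H)_3}$ only gives $\tau_\omega(z)=O(1)$ yet $D(z,\delta\tau_\omega(z))$ still leaves every compact set as $z\to\infty$. None of this goes beyond the structural hypotheses already recorded in the definition of $\mathcal W^\ast(\Omega)$.
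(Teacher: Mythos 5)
Your proposal is correct and follows essentially the same route as the paper: Lemma \ref{attain} plus the mean value property reduce everything to a two-sided pointwise comparison of $MO_{\delta,\omega}(\phi)$ with $\tau_\omega|\phi'|$, with the upper bound via the fundamental theorem of calculus and $\mathrm{(H)_4}$, and the vanishing statement via $\mathrm{(H)_3}$. The only (cosmetic) difference is in the lower bound, where you use the moment identity $\phi'(z)=\tfrac{2}{r^2}\fint_{D(z,r)}(\phi(w)-\phi(z))\overline{(w-z)}\,dA(w)$ instead of the paper's annulus-averaged Cauchy integral formula; both yield the same estimate $\tau_\omega(z)|\phi'(z)|\lesssim MO_{\delta,\omega}(\phi)(z)$.
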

\begin{proof}
		(1) $(\Longrightarrow)$ First, we assume that $\bar{\phi}\in BDA_\omega(\Omega)$. Given $z\in \Omega$, let $\phi_z(w):=\phi(w)-\phi(z)$, then by applying the Cauchy formula for the derivative of $\phi_z$, we have
		\begin{align*}
			\phi'(z)&=\phi_z'(z)\\&=\frac{1}{2\pi \mathrm{i}}\int_{|\xi-z|=t}\frac{\phi_z(\xi)}{(\xi-z)^2}\,d\xi\\
			&=\frac{1}{2\pi}\int_{0}^{2\pi}
			\frac{\phi(z+te^{\mathrm{i}\theta})-\phi(z)}{te^{\mathrm{i}\theta}}\,d\theta,
		\end{align*}
		where in the last equality we used the change of variable $\xi=z+te^{\mathrm{i}\theta}$. Next, by integrating both sides with respect to the measure $t\,dt$ from $\delta\tau_\omega(z)/2$ to $\delta\tau_\omega(z)$, we see that
		\begin{align*} \frac{3}{8}\delta^2\tau_\omega(z)^2\phi'(z)=\frac{1}{2\pi}\int_{D(z,\delta\tau_\omega(z))\backslash D(z,\delta\tau_\omega(z)/2)}
			\frac{\phi(\xi)-\phi(z)}{\xi-z}\,dA(\xi).
		\end{align*}
It follows that
		\begin{align*}
			\tau_\omega(z)^2|\phi'(z)|&\lesssim \int_{D(z,\delta\tau_\omega(z))\backslash D(z,\delta\tau_\omega(z)/2)}
			\left|\frac{\phi(\xi)-\phi(z)}{\xi-z}\right|\,dA(\xi)\\
&\lesssim  \tau_\omega(z)^{-1}\int_{D(z,\delta\tau_\omega(z))}
			\left|\phi(\xi)-\phi(z)\right|\,dA(\xi).
		\end{align*}
By this inequality and Cauchy-Schwarz's inequality, we conclude that
		\begin{align}\label{cb112}\tau_\omega(z)|\phi'(z)|\lesssim MO_{\delta,\omega} (\bar{\phi})(z)=G_{\delta,\omega}(\bar{\phi})(z).
\end{align}
Combining inequalities \eqref{cb111} with \eqref{cb112}, we complete the proof of the first statement.
		
		$(\Longleftarrow)$ Next, we assume that $\phi\in \mathcal{B}^\omega$. To show the required result, we first note that for all $z\in \Omega$ and $w\in D(z,\delta\tau_\omega(z))$,  $$\tau_\omega(z)\asymp \tau_\omega(w)\asymp \tau_\omega(z+t(w-z)),\quad {\rm for}\ {\rm all}\ 0\leq t\leq 1.$$ It follows that
		\begin{align}\label{bdca}
			|\phi(w)-\phi(z)|&=\left|\int_0^1 \phi'(z+t(w-z))(w-z)\,dt\right|\nonumber\\
			&\leq \int_0^1 \left|\phi'(z+t(w-z))(w-z)\right|\,dt\nonumber\\
			&\leq \delta \tau_\omega(z) \tau_\omega(z+t(w-z))^{-1}\sup\limits_{\xi\in D(z,\delta \tau_\omega(z))}\tau_\omega(\xi)|\phi'(\xi)|\nonumber\\
			&\lesssim \sup\limits_{\xi\in D(z,\delta \tau_\omega(z))}\tau_\omega(\xi)|\phi'(\xi)|.
		\end{align}
Since $\phi\in \text{Hol}(\Omega)$, by the mean value property of $\phi$, we have $\hat{\phi}_\delta (z)=\phi(z)$. Combining this equality with inequality \eqref{bdca} and Lemma \ref{attain}, we conclude that
		\begin{align}\label{cb111}
G_{\delta,\omega}(\bar{\phi})(z)&=MO_{\delta,\omega} (\phi)(z)\nonumber\\
&=\left(\fint_{D(z,\delta\tau_\omega(z))}|\phi(w)-\phi(z)|^2\, dA(w)\right)^{1/2}\nonumber\\
&\lesssim \sup\limits_{\xi\in D(z,\delta \tau_\omega(z))}\tau_\omega(\xi)|\phi'(\xi)|.
\end{align}
This implies the required result directly.
		
		(2) $(\Longrightarrow)$ First, we assume that $\displaystyle\bar{\phi}\in VDA_\omega(\Omega)$. Under this assumption, the required result follows directly from inequality \eqref{cb112}.

$(\Longleftarrow)$ Next, we assume that $\phi\in \mathcal{B}_0^\omega$.  This assumption, together with assumption ${\rm\bf (H)_3}$, yields
$$\lim_{z\rightarrow \partial_\infty\Omega}\sup\limits_{\xi\in D(z,\delta \tau_\omega(z))}\tau_\omega(\xi)|\phi'(\xi)|=0.$$
Combining this fact with inequality \eqref{cb111}, we deduce that
$$\lim_{z\rightarrow \partial_\infty\Omega}G_{\delta,\omega}(\bar{\phi})(z)=0.$$
We complete the proof of Proposition \ref{compareProp}.
	\end{proof}
\begin{remark}\label{keyremark000}
If $f\notin {\rm Hol}(\Omega)$, then the norm equivalences
$$\displaystyle \|f\|_{BDA_\omega(\Omega)}\asymp \displaystyle \|\tau_\omega\partial_z\bar{f}\|_{L^\infty(\Omega)}$$
and
$$\displaystyle \|f\|_{IDA_\omega^p(\Omega)}\asymp \displaystyle \|\tau_\omega\partial_z\bar{f}\|_{L^p(\Omega,d\lambda_\omega)},\quad 1<p<\infty,$$
may not be true, where $\partial_z$ is understood in the weak sense. This can be seen by considering the following function in the context of standard Fock space:
\begin{align}\label{counter000}
f(z):=\begin{cases}
\frac{1}{z},& {\rm if}\ |z|\geq 1,\\
0,& {\rm if}\ |z|< 1.
\end{cases}
\end{align}
Indeed, the above function satisfies $0<G_{1,\omega}(f)(z)\lesssim 1$ when $0<|z|< 2$, but $$|\partial_z\bar{f}(z)|=0, \quad {\rm for}\ {\rm all}\ z\in\mathbb{C}\backslash \partial\mathbb{D}.$$
See {\rm \cite{MR4552558,MR4744482}} for a discussion about the properties of this function.
\end{remark}

\begin{proposition}\label{compareProp2}
Let $\omega \in \mathcal{W}^\ast(\Omega)$, $\delta\in (0,\delta_\omega)$ and $\phi\in {\rm Hol}(\Omega)$.
\begin{enumerate}
  \item Let $h: [0,+\infty)\to  [0,+\infty)$ be an increasing function such that $h(0)=0$ and $h(t^p)$ is convex for some $0<p<\infty$, then there exists $B>0$, which depends only on $\delta$, $\omega$ and $p$, such that
	$$
	\displaystyle \int _\Omega h\left ( \frac{1}{B}G_{\delta,\omega}(\bar{\phi})(z)\right ) d\lambda _\omega (z) \leq  \displaystyle \int _\Omega h\left ( \tau_\omega(z)|\phi'(z)|\right ) d\lambda _\omega (z) \leq \displaystyle \int _\Omega h\left ( BG_{\delta,\omega}(\bar{\phi})(z)\right ) d\lambda _\omega (z).
	$$
Consequently, for any  $1\leq p<\infty$, $\bar{\phi}\in IDA_\omega^p(\Omega)$ if and only if $\phi\in \mathcal{B}_p^\omega$. Moreover, we have
$$\|\bar{\phi}\|_{IDA_\omega^p(\Omega)}\asymp \|\phi\|_{\mathcal{B}_p^\omega}.$$
  \item If $\rho $ is an increasing function  such that $\rho (x)/x^ \gamma$ is decreasing for some $\gamma>0$, then
      $$(G_{\delta,\omega}(\bar{\phi}))^*(n)  \lesssim 1/ \rho (n),\ {\rm for}\ {\rm all}\ n\in\mathbb{N}  \Longleftrightarrow  (\tau_\omega|\phi'|)^*(n)  \lesssim 1/ \rho (n),\ {\rm for}\ {\rm all}\ n\in\mathbb{N}.$$
\end{enumerate}
\end{proposition}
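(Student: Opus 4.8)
\textbf{Proof proposal for Proposition \ref{compareProp2}.}

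The plan is to prove part (1) first, since part (2) will follow from it in the same way that Theorem \ref{main0}\eqref{AAAAA} and \eqref{BBBBB} followed from Theorem \ref{main}. For part (1), the key observation is Lemma \ref{attain}, which identifies $G_{\delta,\omega}(\bar{\phi})(z)$ with $MO_{\delta,\omega}(\phi)(z)=\big(\fint_{D(z,\delta\tau_\omega(z))}|\phi(w)-\phi(z)|^2\,dA(w)\big)^{1/2}$ when $\phi\in{\rm Hol}(\Omega)$. So the whole matter reduces to comparing this mean-oscillation quantity pointwise-ish (really, in an integrated sense after composing with $h$) against $\tau_\omega(z)|\phi'(z)|$. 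The two one-sided pointwise bounds $\tau_\omega(z)|\phi'(z)|\lesssim G_{\delta,\omega}(\bar{\phi})(z)$ and $G_{\delta,\omega}(\bar{\phi})(z)\lesssim \sup_{\xi\in D(z,\delta\tau_\omega(z))}\tau_\omega(\xi)|\phi'(\xi)|$ are already recorded in \eqref{cb112} and \eqref{cb111}. The first of these immediately gives $\int_\Omega h\big(\tfrac1B G_{\delta,\omega}(\bar{\phi})(z)\big)\,d\lambda_\omega(z)\le \int_\Omega h\big(\tau_\omega(z)|\phi'(z)|\big)\,d\lambda_\omega(z)$ for a suitable absolute $B$ (using monotonicity of $h$). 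For the reverse inequality the sup over the disk must be absorbed: here I would use subharmonicity of $|\phi'|^p$ together with the convexity of $h(t^p)$. Concretely, from \eqref{cb111}, $G_{\delta,\omega}(\bar\phi)(z)\lesssim \sup_{\xi\in D(z,\delta\tau_\omega(z))}\tau_\omega(\xi)|\phi'(\xi)|\lesssim \big(\fint_{D(z,2\delta\tau_\omega(z))}(\tau_\omega(\xi)|\phi'(\xi)|)^p\,dA(\xi)\big)^{1/p}$ by the sub-mean-value property of $|\phi'|^p$ and $\tau_\omega(\xi)\asymp\tau_\omega(z)$ on such a disk. Applying $h$, using that $h(t^p)$ is convex hence $h\big((\fint |\cdot|^p)\big)\le \fint h(|\cdot|^p\cdot)$-type Jensen estimates, and then Fubini together with the finite-overlap property of the disks $\{D(z_n,2\delta\tau_\omega(z_n))\}$, yields $\int_\Omega h\big(G_{\delta,\omega}(\bar\phi)(z)\big)\,d\lambda_\omega(z)\lesssim \int_\Omega h\big(B\tau_\omega(z)|\phi'(z)|\big)\,d\lambda_\omega(z)$. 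One has to pass through a lattice to make the Fubini/overlap step rigorous, exactly as in Step 1 of the proof of Theorem \ref{main}.

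Once the integral inequality is established, the equivalence $\bar\phi\in IDA_\omega^p(\Omega)\Leftrightarrow\phi\in\mathcal B_p^\omega$ for $1\le p<\infty$ with norm comparability follows by taking $h(t)=t^p$ (which is increasing with $h(0)=0$ and $h(t^{1})=t^p$ convex, or more robustly $h(t)=t$ with the role of $p$ played directly), recalling the definitions $\|\bar\phi\|_{IDA_\omega^p(\Omega)}=\|G_{\delta,\omega}(\bar\phi)\|_{L^p(d\lambda_\omega)}$ and $\|\phi\|_{\mathcal B_p^\omega}=\|\tau_\omega|\phi'|\|_{L^p(d\lambda_\omega)}$.

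For part (2), I would invoke \cite[Proposition 1.1.4 and Proposition 1.4.5(12)]{MR3243734} to rewrite $\int_\Omega h(G_{\delta,\omega}(\bar\phi)(z))\,d\lambda_\omega(z)=\int_0^\infty h((G_{\delta,\omega}(\bar\phi))^*(t))\,dt$ and likewise $\int_\Omega h(\tau_\omega(z)|\phi'(z)|)\,d\lambda_\omega(z)=\int_0^\infty h((\tau_\omega|\phi'|)^*(t))\,dt$, valid for every increasing $h$ with $h(0)=0$. Combining this with the two-sided integral inequality from part (1), which holds for \emph{every} increasing $h$ with $h(0)=0$ and $h(t^p)$ convex for some $p>\gamma$, puts us in position to apply Lemma \ref{finelemma} in both directions: the hypothesis $(G_{\delta,\omega}(\bar\phi))^*(n)\lesssim 1/\rho(n)$ feeds the comparison $\int_0^\infty h((\tau_\omega|\phi'|)^*)\lesssim \int_0^\infty h(B(G_{\delta,\omega}(\bar\phi))^*)\le\int_0^\infty h(B/\rho(t))\,dt$ (after rescaling $\rho$, since $\rho(x)/x^\gamma$ decreasing is preserved up to constants), and then Lemma \ref{finelemma} gives $(\tau_\omega|\phi'|)^*(n)\lesssim 1/\rho(n)$; the reverse implication is symmetric. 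Alternatively one can route through Lemma \ref{Convex1} exactly as in the proof of Theorem \ref{main0}\eqref{AAAAA}. The main obstacle is the reverse (sup-absorbing) inequality in part (1): one must be careful that the subharmonicity trick requires enlarging the disk from radius $\delta\tau_\omega(z)$ to, say, $2\delta\tau_\omega(z)$, which forces the use of $G_{\delta',\omega}$ for a slightly larger $\delta'$ and then the $\delta$-independence of the $IDA_\omega^p$ norm (noted after the definition of $BDA_\omega$, cf. \cite{MR4668087}); and one must check that the constant produced by Jensen's inequality applied to $h(t^p)$ interacts correctly with the finite-overlap constant $N$ from Lemma \ref{lattice}. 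None of these is deep, but keeping the bookkeeping of constants and the auxiliary exponent $p$ straight is where the real work lies.
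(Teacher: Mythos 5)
Your overall architecture is the paper's: the easy direction comes from the pointwise bound $\tau_\omega(z)|\phi'(z)|\lesssim G_{\delta,\omega}(\bar\phi)(z)$ of \eqref{cb112}, the hard direction from subharmonicity plus Jensen applied to the convexity of $h(t^p)$, and part (2) from the rearrangement identity plus Lemma \ref{finelemma} (the paper routes part (2) exactly this way). Two corrections are needed. The first is cosmetic: the bound $\tau_\omega|\phi'|\lesssim G_{\delta,\omega}(\bar\phi)$ together with monotonicity of $h$ gives the \emph{second} inequality $\int h(\tau_\omega|\phi'|)\,d\lambda_\omega\le\int h(BG_{\delta,\omega}(\bar\phi))\,d\lambda_\omega$, not the first one as you wrote; the first inequality is precisely the one that requires the sup-absorbing argument, which you then set out to prove. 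As written you prove the hard inequality twice and never state the easy one, though the ingredient for it is clearly in hand.

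The second issue is a step that fails as written. You majorize $\sup_{D}\tau_\omega|\phi'|$ by the $L^p$ average $\bigl(\fint(\tau_\omega|\phi'|)^p\,dA\bigr)^{1/p}$ and then invoke Jensen. But the inequality $h\bigl((\fint F^p\,d\mu)^{1/p}\bigr)\le\fint h(F)\,d\mu$ is Jensen for the function $s\mapsto h(s^{1/p})$, whereas the hypothesis is that $t\mapsto h(t^p)$ is convex; these are genuinely different conditions. Take $h(t)=t$ and $p=2$ (admissible: $h(t^2)=t^2$ is convex): your step would assert $(\fint F^2)^{1/2}\le\fint F$, which is the reverse of Cauchy--Schwarz and false. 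The exponent must be inverted: use subharmonicity of $|\phi'|^{1/p}$ to get $\sup_D|\phi'|\lesssim\bigl(\fint|\phi'|^{1/p}\,dA\bigr)^{p}$, and then $h\bigl((\fint F^{1/p}\,d\mu)^p\bigr)=\Phi\bigl(\fint F^{1/p}\,d\mu\bigr)\le\fint\Phi(F^{1/p})\,d\mu=\fint h(F)\,d\mu$ with $\Phi(s):=h(s^p)$ convex and $\mu$ a probability measure. This is exactly what the paper does; it takes $\mu=|k_z|^2\,dA_\omega$ and closes the Fubini step with the identity $\int_\Omega|k_z(u)|^2\,d\lambda_\omega(z)=\tau_\omega(u)^{-2}\omega(u)^{-1}$, whereas your lattice/finite-overlap Fubini over enlarged disks is a valid substitute. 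With the exponent fixed, the remainder of part (1) and your treatment of part (2) go through as in the paper.
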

\begin{proof}
(1) The second inequality is a direct consequence of inequality \eqref{cb112} in combination with the condition that $h$ is an increasing function. Therefore, it remains to show the first inequality. To this end, we first apply the subharmonicity of $|\phi '|^{1/p}$ to get that
		$$|\phi '(\xi)|\leq  \left(\fint_{D(\xi,\delta\tau_\omega(\xi))}|\phi '|^{1/p}\,dA\right)^p,\quad {\rm for}\ {\rm all}\ \xi\in\Omega.$$
Recall from Lemma \ref{lattice} and condition ${\rm\bf (H)_4}$ that $D(\xi, \delta\tau_\omega(\xi))\subset D(z,B_\omega \delta\tau_\omega(z))$ and $\tau_\omega(\xi)\asymp \tau_\omega(z)$ for all $\xi \in D(z,\delta \tau_\omega(z))$. Combining these facts with Lemma \ref{attain} and inequality \eqref{cb111}, we see that
		\begin{align}\label{qwerytu}
G_{\delta,\omega}(\bar{\phi})(z)&=MO_{\delta,\omega}(\phi)(z)\nonumber\\&\lesssim \sup\limits_{\xi\in D(z,\delta \tau_\omega(z))}\tau_\omega(\xi)|\phi'(\xi)|\nonumber\\
&\lesssim \tau_\omega(z)^{1-2p}\left(\int_{D(z,B_\omega \delta\tau_\omega(z))}|\phi '|^{1/p}\,dA\right)^p.
\end{align}
		By condition ${\rm\bf (H)_4}$, we see that
		$|k_{z}(u)|\asymp \|K_u\|_{A_\omega^2}=\tau_\omega(u)^{-1}\omega(u)^{-1/2}$ and  $\tau_\omega(u)\asymp\tau_\omega(z)$ for all $u \in D(z,B_\omega\delta \tau_\omega(z))$. Hence,
		\begin{align}\label{cbb1234}
			\tau_\omega(z)^{1-2p}\left(\int_{D(z,B_\omega \delta\tau_\omega(z))}|\phi '(u)|^{1/p}\,dA(u)\right)^p&\asymp \left(\int_{D(z,B_\omega \delta\tau_\omega(z))}(\tau_\omega(u)|\phi '(u)|)^{1/p}\cdot |k_z(u)|^2\,dA_\omega(u)\right)^p\nonumber\\
			&\lesssim \left(\int_{\Omega}(\tau_\omega(u)|\phi '(u)|)^{1/p}\cdot |k_z(u)|^2\,dA_\omega(u)\right)^p.
		\end{align}
Since $h$ is an increasing function, we combine inequalities \eqref{qwerytu} with  \eqref{cbb1234} to conclude that there exists a constant $B>0$ such that
		$$h(\frac{1}{B}G_{\delta,\omega}(\bar{\phi})(z))\leq h\left[\left(\int_{\Omega}(\tau_\omega(u)|\phi '(u)|)^{1/p}\cdot |k_z(u)|^2\,dA_\omega(u)\right)^p\right].$$
		Since $|k_z|^2\,dA_\omega$ is a probability measure for each $z\in \Omega$ and $h(t^p)$ is a convex function for some $0<p<\infty$, we apply Jensen's inequality to deduce that
		$$h\left[\left(\int_{\Omega}(\tau_\omega(u)|\phi '(u)|)^{1/p}\cdot |k_z(u)|^2\,dA_\omega(u)\right)^p\right] \leq \int_{\Omega}h(\tau_\omega(u)|\phi '(u)|)\cdot |k_z(u)|^2\,dA_\omega(u).$$
To continue, we apply the equality $\|K_z\|_{A_\omega^2}= \tau_\omega(z)^{-1}\omega(z)^{-1/2}$ to get that
\begin{align*}
			\int_{\Omega}|k_z(u)|^2\,d\lambda_\omega(z)&=\int_{\Omega}|K(z,u)|^2\|K_{z}\|_{A_\omega^2}^{-2}\tau_\omega(z)^{-2}\,dA(z)\\
&=\int_{\Omega}|K(z,u)|^2\,dA_\omega(z)\\
			&=\tau_\omega(u)^{-2}\omega(u)^{-1}.
		\end{align*}
This, in combination with Fubini's Theorem, yields
\begin{align*}
\int_{\Omega}h(\frac{1}{B}G_{\delta,\omega}(\bar{\phi})(z))\,d\lambda_\omega(z)
&\leq \int_{\Omega}\int_{\Omega}h(\tau_\omega(u)|\phi '(u)|)\cdot |k_z(u)|^2\,dA_\omega(u)\,d\lambda_\omega(z) \\
&=\int_{\Omega}h(\tau_\omega(u)|\phi '(u)|)\,d\lambda_\omega(u).
\end{align*}
This finishes the proof of the first statement in (1), while the second one follows by choosing $h(t)=t^p$ directly.

To show the second statement, we first apply \cite[Proposition 1.1.4 and Proposition 1.4.5 (12)]{MR3243734} to see that for every increasing  function $h: [0,+\infty)\to  [0,+\infty)$ satisfying $h(0)=0$,
$$\int_{\Omega} h(G_{\delta,\omega}(\bar{\phi})(z))\,d\lambda_\omega(z)=\int_0^\infty h((G_{\delta,\omega}(\bar{\phi}))^*(t))dt,$$
$$\int_{\Omega} h(\tau_\omega(z)|\phi'(z)|)\,d\lambda_\omega(z)=\int_0^\infty h((\tau_\omega|\phi'|)^*(t))dt.$$
This, in combination with the first statement in (1), yields that for every increasing  function $h: [0,+\infty)\to  [0,+\infty)$ satisfying $h(0)=0$ and $h(t^p)$ is convex for $p>\gamma$, we have
\begin{align*}
\int_0^\infty h(\frac{1}{B}(G_{\delta,\omega}(\bar{\phi}))^*(t))dt\lesssim \displaystyle \int_0^\infty h((\tau_\omega|\phi'|)^*(t))dt \lesssim \int_0^\infty h(B(G_{\delta,\omega}(\bar{\phi}))^*(t))dt.
\end{align*}
This, together with Remark \ref{equiremark} and Lemma \ref{finelemma}, yields
$$(G_{\delta,\omega}(\bar{\phi}))^*(n)  \lesssim 1/ \rho (n),\ {\rm for}\ {\rm all}\ n\in\mathbb{N}  \Longleftrightarrow  (\tau_\omega|\phi'|)^*(n)  \lesssim 1/ \rho (n),\ {\rm for}\ {\rm all}\ n\in\mathbb{N}.$$
This completes the proof of Proposition \ref{compareProp2}.
\end{proof}

%
%

\section{Berger-Coburn phenomenon for weak Schatten-$p$ class}\label{BCph}
This section is devoted to providing a proof for Theorem \ref{Coburn}.

\begin{definition}
Let $r>0$ and $\mu$ be a positive Borel measure on $\mathbb{C}$, we define the average function $\hat{\mu}_r$ on $\mathbb{C}$ by
\begin{align*}
\hat{\mu}_r(z):=\frac{\mu(D(z,r))}{A(D(z,r))}.
\end{align*}
\end{definition}
\begin{lemma}\label{ToeLemma}
Let $\mu$ be a positive Borel measure on $\mathbb{C}$. Suppose that $\varphi\in C^2(\mathbb{C})$ is real-valued with $\mathrm{i} \partial \bar{\partial} \varphi \simeq \omega_0$, where $\omega_0=i\partial\bar{\partial}|z|^2$ is the Euclidean-K\"{a}hler form on $\mathbb{C}$. Then for any $1< p<\infty$ and $r>0$, there exists a constant $C>0$ such that
\begin{align*}
\|\langle T_\mu k_z,k_z\rangle_{F^2(\varphi)}\|_{L^{p,\infty}(\mathbb{C})}\leq C \|\hat{\mu}_r\|_{L^{p,\infty}(\mathbb{C})}.
\end{align*}
\end{lemma}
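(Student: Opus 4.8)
The plan is to control the Berezin transform $\widetilde\mu(z):=\langle T_\mu k_z,k_z\rangle_{F^2(\varphi)}$ pointwise by a convolution of $\hat\mu_r$ with a fixed integrable kernel, and then to use that such convolutions are bounded on the Lorentz space $L^{p,\infty}(\mathbb{C})$ precisely when $1<p<\infty$.

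First, by the Toeplitz identity \eqref{Toeformula} (here $\omega=e^{-\varphi}$, so $F^2(\varphi)=A^2_\omega$ and $k_z$ is a unit vector),
\[
\widetilde\mu(z)=\int_{\mathbb{C}}|k_z(w)|^2e^{-\varphi(w)}\,d\mu(w).
\]
Since $i\partial\bar\partial\varphi\asymp\omega_0$ we have $\tau_\omega\asymp1$, and the normalized reproducing kernel enjoys the standard off-diagonal decay $|k_z(w)|^2e^{-\varphi(w)}\lesssim e^{-\varepsilon|z-w|}$ for some $\varepsilon>0$ (this is \cite[(2.4)]{MR4402674}; see also \cite{MR4668087,MR2891634}). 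Hence
\[
\widetilde\mu(z)\lesssim\int_{\mathbb{C}}e^{-\varepsilon|z-w|}\,d\mu(w).
\]

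Next I would rewrite the right-hand side through $\hat\mu_r$. Using $A(D(w,r))=\pi r^2$ and Fubini's theorem,
\[
\int_{\mathbb{C}}\hat\mu_r(w)\,e^{-\varepsilon|z-w|}\,dA(w)=\frac{1}{\pi r^2}\int_{\mathbb{C}}\Big(\int_{D(u,r)}e^{-\varepsilon|z-w|}\,dA(w)\Big)\,d\mu(u)\asymp\int_{\mathbb{C}}e^{-\varepsilon|z-u|}\,d\mu(u),
\]
where the comparison uses $\big||z-w|-|z-u|\big|\le r$ for $w\in D(u,r)$ and the fact that $r$ is fixed. Combining the two displays yields the pointwise bound $\widetilde\mu(z)\lesssim(\hat\mu_r\ast\Psi)(z)$ with $\Psi(w):=e^{-\varepsilon|w|}\in L^1(\mathbb{C})$.

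Finally, because $1<p<\infty$ the space $L^{p,\infty}(\mathbb{C})$ carries a translation-invariant equivalent norm (for instance $f\mapsto\sup_{t>0}t^{1/p-1}\int_0^tf^*(s)\,ds$), so Minkowski's integral inequality gives the weak-type Young estimate $\|g\ast\Psi\|_{L^{p,\infty}}\lesssim\|\Psi\|_{L^1}\|g\|_{L^{p,\infty}}$; alternatively one may interpolate the classical bounds $L^{p_i}\to L^{p_i}$ for $p_0<p<p_1$ coming from Young's inequality. Taking $g=\hat\mu_r$ and using $0\le\widetilde\mu\lesssim\hat\mu_r\ast\Psi$ gives $\|\widetilde\mu\|_{L^{p,\infty}(\mathbb{C})}\lesssim\|\hat\mu_r\|_{L^{p,\infty}(\mathbb{C})}$, as claimed. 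The only genuinely delicate ingredients are the kernel decay estimate in the weighted Fock setting and the fact that convolution with an $L^1$ kernel preserves weak $L^p$ exactly in the range $1<p<\infty$; the remaining steps are routine Fubini and Minkowski bookkeeping.
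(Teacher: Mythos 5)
Your proof is correct and follows essentially the same route as the paper: dominate $\langle T_\mu k_z,k_z\rangle_{F^2(\varphi)}$ pointwise by $(\hat\mu_r\ast e^{-\theta|\cdot|})(z)$ using the off-diagonal kernel decay, then apply the weak-type Young inequality, valid for $1<p<\infty$. The only difference is cosmetic: where the paper cites \cite[Lemma 4.1]{MR4402674} to replace $d\mu$ by $\hat\mu_r\,dA$ before invoking the kernel bound, you apply the kernel bound first and verify the averaging step by hand with Fubini and the estimate $e^{-\varepsilon|z-w|}\asymp e^{-\varepsilon|z-u|}$ for $w\in D(u,r)$, which is a correct and self-contained substitute.
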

\begin{proof}
We apply formula \eqref{Toeformula}, \cite[Lemma 4.1]{MR4402674} and \cite[Formula (2.4)]{MR4402674} to deduce that for any $r>0$, there are positive constants $\theta>0$ and $C>0$ such that
\begin{align*}
\langle T_\mu k_z,k_z\rangle_{F^2(\varphi)}
&=\int_{\mathbb{C}}|k_z(\xi)|^2e^{-\varphi(\xi)}d\mu(\xi)\\
&\leq C \int_{\mathbb{C}}|k_z(\xi)|^2e^{-\varphi(\xi)}\hat{\mu}_r(\xi)dA(\xi)\\
&\leq C \int_{\mathbb{C}}e^{-\theta|z-\xi|}\hat{\mu}_r(\xi)dA(\xi)\\
&=C ({\rm Exp}_{\theta}\ast \hat{\mu}_r)(z),
\end{align*}
where ${\rm Exp}_{\theta}(z):=e^{-\theta |z|}\in L^1(\mathbb{C})$ and $\ast $ denotes the convolution on $\mathbb{C}$ (see \eqref{defconv}).
This, in combination with weak-type convolution Young's inequality \cite[Theorem 1.2.13]{MR3243734}, yields
\begin{align*}
\|\langle T_\mu k_z,k_z\rangle_{F^2(\varphi)}\|_{L^{p,\infty}(\mathbb{C})}\lesssim \|{\rm Exp}_{\theta}\|_{L^1(\mathbb{C})} \|\hat{\mu}_r\|_{L^{p,\infty}(\mathbb{C})}\lesssim \|\hat{\mu}_r\|_{L^{p,\infty}(\mathbb{C})}.
\end{align*}
This finishes the proof of Lemma \ref{ToeLemma}.
\end{proof}

The following result, together with Theorem \ref{main0} where  $\rho(t)$ is chosen to be $(1+t)^{1/p}$, establishes a new necessary and sufficient condition for $H_f\in S^{p,\infty}$ ($1<p<\infty$) in the context of weighted Fock space.
\begin{proposition} \label{Berezin}
Let $f\in \mathcal S$ and suppose that $\varphi\in C^2(\mathbb{C})$ is real valued with $\mathrm{i} \partial \bar{\partial} \varphi \simeq \omega_0$, where $\omega_0=i\partial\bar{\partial}|z|^2$ is the Euclidean-K\"{a}hler form on $\mathbb{C}$. Then for any $0<p<\infty$, $f\in \mathrm{IDA}^{p,\infty}(\mathbb{C})$ if and only if $\|H_f(k_z)\|_{L^2(\varphi)}\in L^{p,\infty}(\mathbb{C})$.
\end{proposition}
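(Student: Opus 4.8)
Write $\omega=e^{-\varphi}$; then $\omega\in\mathcal W^\ast(\mathbb C)$ with $\tau_\omega\asymp 1$ and $d\lambda_\omega\asymp dA$, so that membership $f\in\mathrm{IDA}^{p,\infty}(\mathbb C)$ is exactly the statement $G_\delta(f)\in L^{p,\infty}(\mathbb C)$. One implication comes for free: running the chain of (in)equalities \eqref{uselater} with the normalized reproducing kernel $k_z$ in place of a general $k_z$ gives the pointwise bound $G_\delta(f)(z)\lesssim\|H_f k_z\|_{L^2(\varphi)}$ for every $z$, and taking $L^{p,\infty}(\mathbb C)$ quasi-norms yields $\|f\|_{\mathrm{IDA}^{p,\infty}}\lesssim\bigl\|\,\|H_f k_\cdot\|_{L^2(\varphi)}\,\bigr\|_{L^{p,\infty}(\mathbb C)}$. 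Thus the whole content of the proposition is the reverse estimate, and I would organize it around the decomposition of $f$ and the Toeplitz/Berezin machinery already in the paper.

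For the reverse estimate, split $f=f_1+f_2$ by Lemma \ref{decomposelemma} with $f_1\in C^\infty(\mathbb C)$, $f_1,f_2\in\mathcal S$ (as in the proof of Theorem \ref{boundedness}), and $M_{\delta',\omega}(\tau_\omega\bar\partial f_1)+M_{\delta',\omega}(f_2)\lesssim G_{B_\omega^2\delta',\omega}(f)$ for a suitable radius $\delta'$. Since $k_z$ is a scalar multiple of $K_z$, hence lies in $\Gamma$, Corollary \ref{keycoro} applied to $f_1$ together with the trivial bound $\|H_{f_2}k_z\|_{L^2(\varphi)}\le\|f_2 k_z\|_{L^2(\varphi)}$ give, via \eqref{Toeformula},
$$\|H_f k_z\|_{L^2(\varphi)}^2\ \lesssim\ \langle T_{\mu_1}k_z,k_z\rangle_{F^2(\varphi)}+\langle T_{\mu_2}k_z,k_z\rangle_{F^2(\varphi)},\qquad d\mu_1:=\tau_\omega^2|\bar\partial f_1|^2\,dA,\quad d\mu_2:=|f_2|^2\,dA ,$$
while the decomposition bound shows $\widehat{(\mu_j)}_r\lesssim G_{B\delta,\omega}(f)^2$ pointwise for each fixed $r>0$ and $j=1,2$. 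If now $f\in\mathrm{IDA}^{p,\infty}(\mathbb C)$, then $G_{B\delta,\omega}(f)^2\in L^{p/2,\infty}(\mathbb C)$ with quasi-norm $\lesssim\|f\|_{\mathrm{IDA}^{p,\infty}}^2$ (independence of the defining radius of the $IDA$ spaces), hence so is each $\widehat{(\mu_j)}_r$. When $p>2$ we have $p/2>1$, and Lemma \ref{ToeLemma} applied with exponent $p/2$ finishes the job: $\bigl\|\langle T_{\mu_j}k_\cdot,k_\cdot\rangle\bigr\|_{L^{p/2,\infty}}\lesssim\|\widehat{(\mu_j)}_r\|_{L^{p/2,\infty}}\lesssim\|f\|_{\mathrm{IDA}^{p,\infty}}^2$, so that $\bigl\|\,\|H_f k_\cdot\|_{L^2(\varphi)}\,\bigr\|_{L^{p,\infty}}=\bigl\|\,\|H_f k_\cdot\|_{L^2(\varphi)}^2\,\bigr\|_{L^{p/2,\infty}}^{1/2}\lesssim\|f\|_{\mathrm{IDA}^{p,\infty}}$.

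The main obstacle is the range $0<p\le2$, where $p/2\le1$ and the weak-type Young inequality underlying Lemma \ref{ToeLemma} is no longer available. Here I would keep the pointwise estimate proved inside Lemma \ref{ToeLemma}, namely $\langle T_{\mu_j}k_z,k_z\rangle\lesssim(\mathrm{Exp}_\theta\ast\widehat{(\mu_j)}_r)(z)$, discretize over the lattice $\{z_n\}$ of Lemma \ref{lattice} using $\widehat{(\mu_j)}_r(w)\lesssim G_{B^2\delta,\omega}(f)(z_n)^2$ for $w\in R_n$ to get $\langle T_{\mu_j}k_z,k_z\rangle\lesssim\sum_n c_n\,e^{-\theta'|z-z_n|}$ with $c_n:=G_{B^2\delta,\omega}(f)(z_n)^2$, and then prove directly that $\{c_n\}\in\ell^{q,\infty}$ implies $z\mapsto\sum_n c_n e^{-\theta'|z-z_n|}\in L^{q,\infty}(\mathbb C)$ for every $q=p/2>0$ on a separated lattice. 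This last estimate I would obtain by fixing a level $\lambda$, splitting the indices into $\{n:c_n>\varepsilon\lambda\}$ — at most $\lesssim\lambda^{-q}$ of them by the $\ell^{q,\infty}$ bound — and its complement, whose contribution is $\le\tfrac12\lambda$ since $\sup_z\sum_n e^{-\theta'|z-z_n|}<\infty$; the super-level set of the first (finite) sum is then covered by the exponential tails around the finitely many points $z_n$ involved, and a geometric summation over dyadic scales of $\lambda$ absorbs the logarithmic losses, producing $A\bigl\{\sum_n c_n e^{-\theta'|z-z_n|}>\lambda\bigr\}\lesssim\lambda^{-q}\|\{c_n\}\|_{\ell^{q,\infty}}^q$.

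Finally, combining this with the fact that the $\mathrm{IDA}^{p,\infty}$-norm is comparable to the $\ell^{p,\infty}$-norm of the sequence $\{G_{\delta,\omega}(f)(z_n)\}_n$ over the lattice (the standard discretization of the $IDA$ spaces, cf. \cite{MR4668087} and the proof of Theorem \ref{fastdecay}), one gets $\{c_n\}\in\ell^{p/2,\infty}$ with the right quasi-norm, closing the argument for all $0<p\le2$ as well; together with the easy direction this yields the claimed equivalence for every $0<p<\infty$. (Alternatively, for $0<p\le1$ the reverse estimate could be deduced from Theorem \ref{fastdecay} combined with the easy direction, but the $\bar\partial$/Toeplitz route sketched above is self-contained and uniform in $p$.) I expect the step requiring genuinely new work to be precisely the off-diagonal $\ell^{q,\infty}$-to-$L^{q,\infty}$ estimate for $q\le1$ and its reconciliation with the lattice discretization of $\mathrm{IDA}^{p,\infty}$.
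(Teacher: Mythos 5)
Your proposal is correct in outline and shares the paper's skeleton: the easy direction is the pointwise bound $G_\delta(f)(z)\lesssim\|H_fk_z\|_{L^2(\varphi)}$ (the paper quotes it from \cite{MR4402674}, you rederive it from \eqref{uselater} — same thing), and the hard direction runs through the decomposition $f=f_1+f_2$ of Lemma \ref{decomposelemma}, the $\bar\partial$-estimate for $H_{f_1}$, the trivial bound for $H_{f_2}$, and a case split at $p=2$ with Lemma \ref{ToeLemma} handling $p>2$ exactly as in the paper. Where you genuinely diverge is the range $0<p\le 2$. The paper avoids the failure of weak-type Young's inequality below exponent $1$ by a power trick: using the sub-mean-value inequality of \cite[Lemma 4.1]{MR4402674} it bounds $\|H_{f_1}k_z\|_{L^2(\varphi)}$ by $\bigl(\int|k_z e^{-\varphi/2}|^{p/2}M_{\delta/2}(\bar\partial f_1)^{p/2}\,dA\bigr)^{2/p}$, so the convolution acts on $M_{\delta/2}(\bar\partial f_1)^{p/2}\in L^{2,\infty}$ and Young's inequality is applied at the safe exponent $2$. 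You instead discretize $\hat\mu_r$ over a separated lattice and prove an $\ell^{q,\infty}\to L^{q,\infty}$ bound, $q=p/2\le 1$, for $z\mapsto\sum_n c_n e^{-\theta'|z-z_n|}$. Your route is viable and arguably more transparent (it isolates a reusable off-diagonal summation lemma and meshes with the lattice discretization of $\mathrm{IDA}^{p,\infty}$ already needed in Theorem \ref{fastdecay}), at the cost of an extra lemma; the paper's route is shorter because it never leaves the continuous setting.

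One step of your sketch is stated too loosely to be checkable: "a geometric summation over dyadic scales of $\lambda$ absorbs the logarithmic losses" does not describe a mechanism — for a fixed level $\lambda$ the superlevel-set bound must be proved outright, and the naive union bound over the $N\lesssim\lambda^{-q}$ large indices does produce a factor $(\log(1/\lambda))^2$. The fix is either (a) a weighted union bound, covering the superlevel set by $\bigcup_j\{c_{n_j}e^{-\theta'|z-z_{n_j}|}>c\lambda j^{-2}\}$ with the $c_{n_j}$ in decreasing order, and using $c_{n_j}\le\|c\|_{\ell^{q,\infty}}j^{-1/q}$ together with $\sum_{j\le M}\bigl(\log(M/j)\bigr)^2\asymp M$; or (b) real interpolation of the trivial $\ell^{q_0}\to L^{q_0}$ and $\ell^{q_1}\to L^{q_1}$ bounds ($q_0<q<q_1\le 1$), exactly as the paper interpolates $\ell^{p}\to S^p$ bounds to get $\ell^{p,\infty}\to S^{p,\infty}$ in the proof of Theorem \ref{fastdecay}. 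With either repair your argument closes; as written, that one sentence is a gap in exposition rather than in substance.
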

\begin{proof}
$(\Longrightarrow)$ Suppose that $f\in IDA^{p,\infty}(\mathbb{C})$. By Lemma \ref{decomposelemma} (see also \cite[Lemma 3.6]{MR4668087} and \cite[Section 2.2]{MR4668087} the lattices in $\mathbb{C}$, which is slightly different from the one given in Lemma \ref{decomposelemma}), $f$ admits a decomposition $f=f_1+f_2$ with
\begin{equation}\label{decomp-add}
	|\bar \partial f_1(z)|+ M_{\delta/2}(|\bar \partial f_1|)(z)  + M_{ \delta/2}(f_2)(z) \le C G_{\delta}(f)(z), \ z\in \mathbb{C},
\end{equation}
where $M_{\delta}(f)$ is defined in \eqref{defM}, with $\tau_\omega(z)$ being replaced by $1$.

{\bf Case 1: $0<p\leq 2$.}   Recall in \cite[Formula (4.4)]{MR4402674} that for any $g\in \Gamma$,
\begin{align}\label{44}
\|H_{f_1}g\|_{L^2(\varphi)}\lesssim \|g\bar{\partial}f_1\|_{L^2(\varphi)}.
\end{align}
We apply \cite[Lemma 4.1]{MR4402674} with $d\mu=|\bar{\partial}f_1|^2dA$ to deduce that for any $\delta>0$, there exists a constant $C>0$ such that
\begin{align}\label{evbs}
\left\|\|H_{f_1}(k_z)\|_{L^2(\varphi)}\right\|_{L^{p,\infty}(\mathbb{C})}&\leq C\left\|\left(\int_{\mathbb{C}}\left|k_z(\xi)e^{-\frac{\varphi(\xi)}{2}}\right|^2|\bar{\partial}f_1(\xi)|^2dA(\xi)\right)^{1/2}\right\|_{L^{p,\infty}(\mathbb{C})}\nonumber\\
&\leq C\left\|\left(\int_{\mathbb{C}}\left|k_z(\xi)e^{-\frac{\varphi(\xi)}{2}}\right|^{\frac p2}M_{\delta/2}(\bar{\partial}f_1)(\xi)^{\frac p2}dA(\xi)\right)^{2/p}\right\|_{L^{p,\infty}(\mathbb{C})}.
\end{align}
To continue, we recall from \cite[Formula (2.4)]{MR4402674} that there is a positive constant $\theta>0$ such that
\begin{align}\label{kernelestimate}
|k_z(\xi)e^{-\frac{\varphi(\xi)}{2}}|\lesssim e^{-\theta|z-\xi|}.
\end{align}
Substituting the above kernel estimate into \eqref{evbs} and then applying weak-type convolution Young's inequality \cite[Theorem 1.2.13]{MR3243734}, we deduce that the right-hand side of \eqref{evbs} is dominated by
\begin{align}
\left\|\left(\int_{\mathbb{C}}e^{-\frac{p\theta|z-\xi|}{2}}M_{\delta/2}(\bar{\partial}f_1)(\xi)^{\frac{p}{2}}dA(\xi)\right)^{2/p}\right\|_{L^{p,\infty}(\mathbb{C})}
&=\left\|{\rm Exp}_{\frac{p\theta}{2}}\ast M_{\delta/2}(\bar{\partial}f_1)^{\frac p2} \right\|_{L^{2,\infty}(\mathbb{C})}^{2/p}\nonumber\\
&\lesssim \left\|{\rm Exp}_{\frac{p\theta}{2}}\right\|_{L^1(\mathbb{C})}^{2/p}\left\| M_{\delta/2}(\bar{\partial}f_1)^{\frac{p}{2}}\right\|_{{L^{2,\infty}(\mathbb{C})}}^{2/p}\nonumber\\
&\lesssim \left\| M_{\delta/2}(\bar{\partial}f_1)\right\|_{{L^{p,\infty}(\mathbb{C})}},
\end{align}
where ${\rm Exp}_{\frac{p\theta}{2}}(z):=e^{-\frac{p\theta |z|}{2}}\in L^1(\mathbb{C})$ and $\ast $ denotes the convolution on $\mathbb{C}$ (see \eqref{defconv}). Hence,
\begin{align}\label{cbf1}
\left\|\|H_{f_1}(k_z)\|_{L^2(\varphi)}\right\|_{L^{p,\infty}(\mathbb{C})}\lesssim \| M_{\delta/2}(\bar{\partial}f_1)\|_{{L^{p,\infty}(\mathbb{C})}}.
\end{align}
Next, note that
$$\|H_{f_2}g\|_{L^2(\varphi)}\lesssim \|gf_2\|_{L^2(\varphi)}.$$
Similarly, we apply \cite[Lemma 4.1]{MR4402674} with $d\mu=|f_2|^2dA$ to deduce that
\begin{align}\label{evbs00}
\left\|\|H_{f_2}(k_z)\|_{L^2(\varphi)}\right\|_{L^{p,\infty}(\mathbb{C})}
&\lesssim\left\|\left(\int_{\mathbb{C}}\left|k_z(\xi)e^{-\frac{\varphi(\xi)}{2}}\right|^2|f_2(\xi)|^2dA(\xi)\right)^{1/2}\right\|_{L^{p,\infty}(\mathbb{C})}\nonumber\\
&\lesssim\left\|\left(\int_{\mathbb{C}}\left|k_z(\xi)e^{-\frac{\varphi(\xi)}{2}}\right|^{\frac p2}M_{\delta/2}(f_2)(\xi)^{\frac p2}dA(\xi)\right)^{2/p}\right\|_{L^{p,\infty}(\mathbb{C})}.
\end{align}
Substituting the estimate \eqref{kernelestimate} into \eqref{evbs00} and then applying weak-type convolution Young's inequality \cite[Theorem 1.2.13]{MR3243734}, we see that the right-hand side of \eqref{evbs00} is dominated by
\begin{align}\label{vbjiong}
\left\|\left(\int_{\mathbb{C}}e^{-\frac{p\theta|z-\xi|}{2}}M_{\delta/2}(f_2)(\xi)^{\frac p2}dA(\xi)\right)^{2/p}\right\|_{L^{p,\infty}(\mathbb{C})}
&=\left\|{\rm Exp}_{\frac{p\theta}{2}}\ast M_{\delta/2}(f_2)^{\frac p2} \right\|_{L^{2,\infty}(\mathbb{C})}^{2/p}\nonumber\\
&\lesssim \left\|{\rm Exp}_{\frac{p\theta}{2}}\right\|_{L^1(\mathbb{C})}^{2/p}\| M_{\delta/2}(f_2)^{\frac{p}{2}}\|_{{L^{2,\infty}(\mathbb{C})}}^{2/p}\nonumber\\
&\lesssim \left\| M_{\delta/2}(f_2)\right\|_{{L^{p,\infty}(\mathbb{C})}}.
\end{align}
This, in combination with inequality \eqref{decomp-add}, yields
\begin{align}\label{ssmmt}
\left\|\|H_{f_2}(k_z)\|_{L^2(\varphi)}\right\|_{L^{p,\infty}(\mathbb{C})}\lesssim \| M_{\delta/2}(f_2)\|_{{L^{p,\infty}(\mathbb{C})}}.
\end{align}

By inequality \eqref{decomp-add}, the right-hand sides of inequalities \eqref{cbf1} and \eqref{ssmmt} are bounded by $C\|G_{\delta}(f)\|_{{L^{p,\infty}(\mathbb{C})}}$. Therefore, $\|H_f(k_z)\|_{L^2(\varphi)}\in L^{p,\infty}(\mathbb{C})$.

{\bf Case 2: $2<p<\infty$.} By \eqref{44},
\begin{align}\label{later1}
	\|H_{f_1}(k_z)\|_{L^2(\varphi)}^p
	\lesssim \left \langle  |\bar \partial f_1|^2 k_z, k_z  \right\rangle ^{\frac p 2}
	=C \left \langle T_{\mu_{f_1}} k_z, k_z  \right\rangle_{F^2(\varphi)} ^{\frac p 2},
\end{align}
where $d\mu_{f_1}:=|\bar \partial f_1|^2 dA$.
Moreover, we apply Lemma \ref{ToeLemma} to deduce that
\begin{align*}
\left\|\left \langle  T_{\mu_{f_1}} k_z, k_z  \right\rangle_{F^2(\varphi)}\right\|_{L^{p/2,\infty}(\mathbb{C})}^{\frac p 2}
\lesssim  \left\|M_{\delta/2}( |\bar{\partial} f_1|)^2\right\|_{L^{p/2,\infty}(\mathbb{C})}^{\frac p2}=C\left\|M_{\delta/2}( |\bar{\partial} f_1|)\right\|_{L^{p,\infty}(\mathbb{C})}^{p}.
\end{align*}
This, in combination with inequality \eqref{later1}, yields
\begin{align}\label{hff12}
\left\|\|H_{f_1}(k_z)\|_{L^2(\varphi)}\right\|_{L^{p,\infty}(\mathbb{C})}^p
&=\left\|\|H_{f_1}(k_z)\|_{L^2(\varphi)}^p\right\|_{L^{1,\infty}(\mathbb{C})}\nonumber\\
&\lesssim \left\|\left \langle  T_{ \mu_{f_1}} k_z, k_z  \right\rangle_{F^2(\varphi)}^{\frac p 2}\right\|_{L^{1,\infty}(\mathbb{C})}\nonumber\\
&=C \left\|\left \langle  T_{ \mu_{f_1}} k_z, k_z  \right\rangle_{F^2(\varphi)}\right\|_{L^{p/2,\infty}(\mathbb{C})}^{\frac p 2}\nonumber\\
&\lesssim \left\|M_{\delta/2}( |\bar{\partial} f_1|)\right\|_{L^{p,\infty}(\mathbb{C})}^{p}.
\end{align}

For the term involving $f_2$, we note that
\begin{align}\label{later2}
	\|H_{f_2}(k_z)\|_{L^2(\varphi)}^p  \le \left \langle  |  f_2|^2 k_z, k_z  \right\rangle_{F^2(\varphi)} ^{\frac p 2}=C \left \langle T_{\mu_{f_2}} k_z, k_z  \right\rangle_{F^2(\varphi)} ^{\frac p 2},
\end{align}
where $d\mu_{f_2}:=|f_2|^2dA$. By repeating the argument in the proof of inequality \eqref{hff12}, we see that
\begin{align}\label{hff13}
\left\|\|H_{f_2}(k_z)\|_{L^2(\varphi)}\right\|_{L^{p,\infty}(\mathbb{C})}
&\lesssim \left\|M_{\delta/2}( |f_2|)\right\|_{L^{p,\infty}(\mathbb{C})}.
\end{align}

By inequality \eqref{decomp-add}, the right-hand sides of inequalities \eqref{hff12} and \eqref{hff13} are bounded by $C\|G_{\delta}(f)\|_{{L^{p,\infty}(\mathbb{C})}}$. Therefore, $\|H_f(k_z)\|_{L^2(\varphi)}\in L^{p,\infty}(\mathbb{C})$.

$(\Longleftarrow)$ Suppose that $\|H_f(k_z)\|_{L^2(\varphi)}\in L^{p,\infty}(\mathbb{C})$. Then it follows from \cite[Formula (4.7)]{MR4402674} that $G_\delta(f)(z)\lesssim \|H_f(k_z)\|_{L^2(\varphi)}$ for all $z\in\mathbb{C}$. This implies directly that
$$\|f\|_{IDA^{p,\infty}(\mathbb{C})}\lesssim \left\|\|H_f(k_z)\|_{L^2(\varphi)}\right\|_{ L^{p,\infty}(\mathbb{C})}.$$
This ends the proof of Proposition \ref{Berezin}.
\end{proof}

Let $\mathcal{B}$ be the Ahlfors-Beurling operator, which is defined as:
$$\mathcal{B}(f)(z):={\rm p.v.}-\frac{1}{\pi}\int_{\mathbb{C}}\frac{f(\xi)}{(\xi-z)^2}dA(\xi),$$
where ${\rm p.v.}$ denotes the Cauchy principle value (See \cite{MR2241787,MR2472875} for more details). Ahlfors-Beurling operator is a Calder\'{o}n-Zygmund operator, which means that it is bounded on $L^p(\mathbb{C})$ for all $1<p<\infty$. Furthermore, by real interpolation (see \cite[Theorems 3.11.8 and 5.2.1]{Bergh}), it is also bounded on $L^{p,\infty}(\mathbb{C})$.

The following lemma is a weak-type version of \cite[Lemma 7.1]{MR4402674}.
\begin{lemma}\label{barthm}
Suppose that $1<p<\infty$. Then there is a constant $C>0$ depending only on $p$ such that for all $f\in C^2(\mathbb{C})\cap L^\infty(\mathbb{C})$,
$$\left\|\frac{\partial f}{\partial z}\right\|_{L^{p,\infty}(\mathbb{C})}\leq C\left\|\frac{\partial f}{\partial \bar{z}}\right\|_{L^{p,\infty}(\mathbb{C})}.$$
\end{lemma}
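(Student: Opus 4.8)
The plan is to reduce matters to the pointwise identity $\partial f=\mathcal B(\bar\partial f)$ on $\mathbb C$, after which the lemma is immediate: as recorded just above, $\mathcal B$ is a Calder\'on--Zygmund operator, hence bounded on $L^{p,\infty}(\mathbb C)$ for $1<p<\infty$, so $\|\partial f\|_{L^{p,\infty}(\mathbb C)}=\|\mathcal B(\bar\partial f)\|_{L^{p,\infty}(\mathbb C)}\le C_p\|\bar\partial f\|_{L^{p,\infty}(\mathbb C)}$. We may assume $\bar\partial f\in L^{p,\infty}(\mathbb C)$, for otherwise there is nothing to prove. Under this assumption $\mathcal B(\bar\partial f)$ is well defined: near the diagonal the principal value causes no trouble because $f\in C^2(\mathbb C)$ forces $\bar\partial f\in C^1(\mathbb C)$, while the integral over $\{|\xi|\ge 1\}$ converges since $\bigl\|\chi_{\{|\cdot|\ge 1\}}|\cdot|^{-2}\bigr\|_{L^{p',1}(\mathbb C)}<\infty$ for $1<p<\infty$, by H\"older's inequality in Lorentz spaces.

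To establish $\partial f=\mathcal B(\bar\partial f)$ I would invoke the Cauchy--Pompeiu formula on the disk $D(0,R)$: for $z\in D(0,R)$,
\[
 f(z)=\frac{1}{2\pi\mathrm i}\oint_{|\xi|=R}\frac{f(\xi)}{\xi-z}\,d\xi-\frac1\pi\int_{|\xi|<R}\frac{\bar\partial f(\xi)}{\xi-z}\,dA(\xi)=:G_R(z)+v_R(z).
\]
Here $G_R$ is holomorphic on $D(0,R)$, and applying $\partial_z$ to $v_R$ reproduces the Beurling transform of the restriction (the identity $\partial_z\circ C=\mathcal B$ for the Cauchy transform $C$), so that $\partial f=G_R'+\mathcal B(\bar\partial f\,\chi_{D(0,R)})$ on $D(0,R)$. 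Subtracting, the fixed, $R$-independent entire function $h:=\partial f-\mathcal B(\bar\partial f)$ satisfies on $D(0,R)$
\[
 h=G_R'-\mathcal B\bigl(\bar\partial f\,\chi_{\mathbb C\setminus D(0,R)}\bigr).
\]

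The crux is to fix $z$, let $R\to\infty$, and show that both terms on the right vanish. For the first: since $|f|\le\|f\|_{L^\infty}$ on $\{|\xi|=R\}$ and $G_R'(z)=\frac{1}{2\pi\mathrm i}\oint_{|\xi|=R}f(\xi)(\xi-z)^{-2}\,d\xi$, one gets $|G_R'(z)|\le 4\|f\|_{L^\infty}R^{-1}$ whenever $R\ge 2|z|$. For the second: when $R\ge 2|z|$ we have $|\xi-z|\ge|\xi|/2$ on $\{|\xi|\ge R\}$, so
\[
 \bigl|\mathcal B\bigl(\bar\partial f\,\chi_{\mathbb C\setminus D(0,R)}\bigr)(z)\bigr|\le\frac4\pi\int_{|\xi|\ge R}\frac{|\bar\partial f(\xi)|}{|\xi|^2}\,dA(\xi)\le C_p\|\bar\partial f\|_{L^{p,\infty}(\mathbb C)}\,R^{-2/p},
\]
again by H\"older in Lorentz spaces together with the scaling $\bigl\|\chi_{\{|\cdot|\ge R\}}|\cdot|^{-2}\bigr\|_{L^{p',1}(\mathbb C)}\asymp R^{-2/p}$. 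Hence $h(z)=0$ for every $z$, i.e.\ $\partial f=\mathcal B(\bar\partial f)$, and the lemma follows. I expect the only delicate bookkeeping to be justifying the differentiation of the area integral through the principal value and checking the Lorentz-space pairings; since the identity $\partial f=\mathcal B(\bar\partial f)$ for such $f$ is precisely the substance of \cite[Lemma 7.1]{MR4402674} in the strong-type setting, an alternative is simply to quote it and add only the final line invoking the boundedness of $\mathcal B$ on $L^{p,\infty}(\mathbb C)$.
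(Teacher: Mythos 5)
Your argument is correct, but it follows a genuinely different route from the paper. The paper never proves the global identity $\partial f=\mathcal B(\bar\partial f)$: it sets $f_R:=f\,\psi_R(|\cdot|)$ with a smooth cutoff, applies the compactly supported identity $\partial f_R=\mathcal B(\bar\partial f_R)$, and then must control the extra term $\mathcal B(f\,\bar\partial\psi_R)$; this is done in the weak norm on $D(0,r)$ with the choice $R=2r^2$, giving an error $O(r^{2-2p})\|f\|_{L^\infty}^p$, and the proof closes by letting $r\to\infty$ via the Fatou property of $\|\cdot\|_{L^{p,\infty}}$. You instead establish the exact pointwise identity $\partial f=\mathcal B(\bar\partial f)$ by Cauchy--Pompeiu on $D(0,R)$, showing that the boundary term $G_R'(z)=O(\|f\|_{L^\infty}R^{-1})$ and the tail $\mathcal B(\bar\partial f\,\chi_{\mathbb C\setminus D(0,R)})(z)=O(\|\bar\partial f\|_{L^{p,\infty}}R^{-2/p})$ both vanish as $R\to\infty$ (your Lorentz-scaling computation $\|\chi_{\{|\cdot|\ge R\}}|\cdot|^{-2}\|_{L^{p',1}}\asymp R^{-2/p}$ is right for $1<p<\infty$); then a single application of the weak-type boundedness of $\mathcal B$ finishes. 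Your route is cleaner in that no error term survives into the final inequality, but it buys this at the cost of applying $\mathcal B$ to $\bar\partial f$ itself, which lies only in $L^{p,\infty}$ globally; the paper's cutoff keeps all inputs of $\mathcal B$ compactly supported and in $\bigcap_q L^q$, where the principal value and the interpolated operator manifestly coincide. The one step you should make explicit is exactly that point: the $L^{p,\infty}$-boundedness of $\mathcal B$ obtained by real interpolation is a statement about the operator extended from $L^{p_0}+L^{p_1}$, so you must check that your pointwise principal value of $\mathcal B(\bar\partial f)$ agrees with that extension (standard: split $\bar\partial f$ at height one into an $L^{p_0}$ piece and an $L^{p_1}$ piece with $p_0<p<p_1$ and use continuity of $\bar\partial f$). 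Also, your closing aside is slightly off: \cite[Lemma 7.1]{MR4402674} is the strong-type norm inequality $\|\partial f\|_{L^p}\le C\|\bar\partial f\|_{L^p}$, not the identity $\partial f=\mathcal B(\bar\partial f)$, so it cannot simply be quoted to yield the weak-type statement; fortunately your main argument does not rely on that shortcut.
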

\begin{proof}
To begin with, we choose $\psi\in C^\infty(\mathbb R)$ be a decreasing function such that $\psi(x)=1$ for $x\le 0$, $\psi(x)=0$ for $x\ge 1$, and $0\le -\psi'(x)\le 2$ for $x\in  \mathbb R$.
		Define $\psi_R(x):= \psi(x-R)$ and $ f_R(z):=  f(z) \psi_R(|z|)$, where $R>0$ to be chosen later. Then $ f_R(z) \in C^2_c(\mathbb C)$. It follows from \cite[Theorem 2.1.1]{MR1800297} that
		$$
		f_R (z)= \frac 1{2\pi \mathrm i} \int_{\mathbb C} \frac {\frac {\partial f_R}{\partial \bar z}}{\xi-z} d\xi\wedge d\bar \xi.
		$$
		Note that $ \frac {\partial f_R}{\partial \bar z} = \psi_R \frac {\partial  f}{\partial \bar z}+ f \frac {\partial  \psi_R}{\partial \bar z} $.   By \cite[Lemma 2]{MR2241787},
		\begin{equation}\label{partial-estimate-z}
			\frac {\partial f_R}{\partial z}(z)
			= \mathcal{B}\left( \frac {\partial f_R}{\partial \bar z}\right)(z)
			= \mathcal{B}\left(\psi_R \frac {\partial  f}{\partial \bar z}\right)(z)+ \mathcal{B}\left(f \frac {\partial  \psi_R}{\partial \bar z}\right)(z).
		\end{equation}
		Since the support of $\frac {\partial  \psi_R}{\partial \bar z}$ is contained in $\{\xi \in \mathbb{C}:R\leq |\xi|\leq R+1\}$, we deduce that for all $r>0$ and $|z|<r$, if $R$ is chosen to be sufficiently large, then
		$$
		\left|\mathcal{B}\left(f \frac {\partial  \psi_R}{\partial \bar z}\right)(z)\right| \le \frac{\|f\|_{L^\infty(\mathbb{C})} } {\pi(R-r)^2}  \int_{R\le |\xi|\le R+1}   dA(\xi)\le  \frac{3R \|f\|_{L^\infty(\mathbb{C})} } { (R-r)^2},
		$$
		and hence,
		\begin{equation}\label{partial-deri-a}
				\left\|\chi_{D(0,r)} \mathcal{B}\left(f \frac {\partial  \psi_R}{\partial \bar z}\right)\right\|_{L^{p,\infty}(\mathbb{C})}^p\leq \left\|\chi_{D(0,r)} \mathcal{B}\left(f \frac {\partial  \psi_R}{\partial \bar z}\right)\right\|_{L^p(\mathbb{C})}^p
			\lesssim  \frac{R^pr^2 \|f\|_{L^\infty(\mathbb{C})}^p } { (R-r)^{2p}}.
		\end{equation}
		 In particular, if we choose $r>1$ such that $r^2\geq r$ and take $R=2r^2$, then
		\begin{equation}\label{partial-deri-aa}
			\left\|\chi_{D(0,r)} \mathcal{B}\left(f \frac {\partial  \psi_{2r^2}}{\partial \bar z}\right)\right\|_{L^{p,\infty}(\mathbb{C})}^p
			\lesssim  r^{2-2p} \|f\|_{L^\infty(\mathbb{C})}^p.
		\end{equation}
		On the other hand, since $\mathcal{B}$ is bounded on $L^{p,\infty}(\mathbb{C})$, we have
		\begin{equation}\label{partial-deri-b}
			\left\| \mathcal{B}\left(\psi_R \frac {\partial  f}{\partial \bar z}\right)\right\|_{L^{p,\infty}(\mathbb{C})}
			\lesssim \left\|  \psi_R \frac {\partial  f}{\partial \bar z} \right\|_{L^{p,\infty}(\mathbb{C})}
			\lesssim \left\|\frac {\partial  f}{\partial \bar z} \right\|_{L^{p,\infty}(\mathbb{C})}.
		\end{equation}
		Combining inequalities \eqref{partial-estimate-z}, \eqref{partial-deri-aa} with \eqref{partial-deri-b}, we conclude that
		$$
		\left\|  \chi_{D(0,r)}\frac {\partial f}{\partial z} \right\|_{L^{p,\infty}(\mathbb{C})}
		= \left\| \chi_{D(0,r)} \frac {\partial f_{2r^2}}{\partial z} \right\|_{L^{p,\infty}(\mathbb{C})}
		\lesssim \left\|\frac {\partial  f}{\partial \bar z} \right\|_{L^{p,\infty}(\mathbb{C})}+r^{2-2p}\|f\|_{L^\infty(\mathbb{C})}.
		$$
		This, in combination with \cite[Exercise 1.1.12]{MR3243734}, yields
		\begin{align}\label{derivative-est}
			\left\|  \frac {\partial f}{\partial z} \right\|_{L^{p,\infty}(\mathbb{C})}
&\leq \liminf_{r\rightarrow \infty}\left\| \chi_{D(0,r)} \frac {\partial f}{\partial z} \right\|_{L^{p,\infty}(\mathbb{C})}\nonumber\\
&\lesssim \liminf_{r\rightarrow \infty}\left(\left\|\frac {\partial  f}{\partial \bar z} \right\|_{L^{p,\infty}(\mathbb{C})}+r^{2-2p}\|f\|_{L^\infty(\mathbb{C})}\right)\nonumber\\
			&=C \left\|\frac {\partial  f}{\partial \bar z} \right\|_{L^{p,\infty}(\mathbb{C})}.
		\end{align}
This ends the proof of Lemma \ref{barthm}.
\end{proof}
Next, we follow the argument in \cite[Theorem 1.2]{MR4402674} (see also \cite{MR4630767}) to establish the Berger-Coburn phenomenon for $S^{p,\infty}$.
\begin{proof}[Proof of Theorem \ref{Coburn}]
Suppose $H_f\in S^{p,\infty}$ with $1<p<\infty$, then it follows from Theorem \ref{main} \eqref{AAAAA} that $f\in \mathrm{IDA}^{p,\infty}(\mathbb{C})$.
Let $\{\psi_n\}$ be a partition of unit given in Lemma \ref{cover}, which is subordinated to the covering $\{D(z_n,\delta/8)\}_n$ of $\mathbb{C}$. Define the $q$-th mean of $|f|$ over $D(z, \delta)$ by
$$
	M_{q, \delta}(f)(z)= \left(\fint_{D(z,\delta)} |f|^q dA \right)^{\frac{1}{q}},\quad z\in\mathbb{C}.
$$
By \cite[Lemma 3.3]{MR4668087}, there is an $h_j\in {\rm Hol}(D(z_j, \delta/4))$ such that $\sup_{z\in D(z_j, \delta/8)}|h_j(z)|\lesssim \|f\|_{ L^\infty(\mathbb{C})}$ and
$$
      M_{2, \delta/4}(f-h_j)(z_j)= G_{\delta/4}(f)(z_j).
$$
By \cite[Lemma 3.5]{MR4668087} (see also Lemma \ref{decomposelemma}), $f$ admits a decomposition $f=f_1+f_2$ with
\begin{equation}\label{decomp-a}
	|\bar \partial f_1(z)|+ M_{2, \delta/4}(|\bar \partial f_1|)(z)  + M_{2, \delta/4}(f_2)(z) \lesssim G_{\delta/2}(f)(z), \quad z\in \mathbb{C},
\end{equation}
where we may choose
$$
	f_1=\sum_{j=1}^\infty h_j\psi_j
$$
as in the proof of Lemma \ref{decomposelemma}. Note that $f_1\in L^\infty(\mathbb{C})$ and $\bar{\partial}\bar{f}_1= F+H$, where
$$
	F= \sum_{j=1}^\infty  \bar {h}_j  \bar {\partial}\psi_j
	\quad{\rm and}\quad
	H= \sum_{j=1}^\infty  \psi_j \bar {\partial}\,\bar  {h}_j.
$$
With the same proof as the one for the estimate of $|\bar \partial  f_1|$ in \eqref{decomp-a}, we have
\begin{equation}\label{est-F}
	|F(z)|\lesssim G_{\delta/2}(f)(z).
\end{equation}
This, in combination with Lemma \ref{barthm}, yields
\begin{equation}\label{f-Sp-norm-b}
	\left \| H \right \|_{L^{p,\infty}(\mathbb{C})} \lesssim\|\bar \partial \bar f_1 \|_{L^{p,\infty}(\mathbb{C})} + \|F \|_{L^{p,\infty}(\mathbb{C})}
	\lesssim \|\bar \partial f_1 \|_{L^{p,\infty}(\mathbb{C})}+ \|F \|_{L^{p,\infty}(\mathbb{C})}\lesssim \|G_{\delta/2}(f)\|_{L^{p,\infty}(\mathbb{C})}.
\end{equation}

To continue, we {\it claim} that
\begin{align}\label{claimin}
M_{2, \delta/16}(H)(z)\lesssim G_{\delta}(f)(z) + M_{p/2, \delta/8}( H)(z),\quad z\in\mathbb{C}.
\end{align}
To show inequality \eqref{claimin}, we first note that for $z\in D(z_j, 3\delta/16)\cap D(z_k, 3\delta/16)$,  the coefficients $\overline{ \frac{ \partial(h_k-h_j)} {\partial{z}_l}}$ of $ \bar {\partial}\left(\overline  {h}_k-\overline  {h}_j\right)$ are conjugate holomorphic on $D(z, \delta/16)$. Using the Cauchy estimate to each of the coefficients,
we deduce that
\begin{align*}
	\left | \overline {\partial}\left(\overline  {h}_k(z) - \overline  {h}_j(z) \right) \right|
	&\lesssim \left( \int_{D(z, \delta/16)} \left | \overline  {h}_k(w) -\overline  {h}_j(w)   \right|^2 dA(w)\right)^{\frac 12}\\
	&\lesssim  G_{\delta/4}(f)(z_k)+   G_{\delta/4}(f)(z_j)\\
 &\lesssim   G_{\delta/2}(f)(z).
\end{align*}
This, together with the identity  $\bar{\partial}\,\overline  {h}_k  = \sum_{j=1}^\infty \psi_j \bar{\partial}(\overline  {h}_k  - \overline  {h}_j ) + H$, yields that for $z\in   D(z_k, 3\delta/16)$,
\begin{align*}
	\left | \bar{\partial}\,\overline  {h}_k(z) \right|^p
	&\lesssim  \left |\sum_{j=1}^\infty \psi_j (z) \bar {\partial}\left(\overline  {h}_k(z) - \overline  {h}_j(z) \right) \right|^p + |H(z)|^p \\
	& \lesssim  \sum_{j,   |z_j-z|<\frac \delta 8} \psi_j (z)\left | \bar {\partial}\left(\overline  {h}_k(z) - \overline  {h}_j(z) \right) \right|^p + |H(z)|^p \\
	& \lesssim  G_{\delta/2}(f)(z)^p + |H(z)|^p.
\end{align*}
Next, we note that for $z\in D(z_k, \delta/8)$, we have $D(z, \delta/16)\subset D(z_k, 3\delta/16)$. Moreover, it follows from the plurisubharmonicity that
\begin{equation}\label{e:PSH}
 |\bar {\partial}\bar  {h}_k (z)|\le  M_{p/2, \delta/16}( \bar{\partial}\bar  {h}_k)(z) \lesssim
     G_{3\delta/4}(f)(z)+ M_{p/2, \delta/16}( H)(z).
\end{equation}
Note that for $z\in \mathbb C$,  there exists some  $w'\in \overline{D(z, \frac{\delta}{16})}$ such that
$$
	M_{2, \delta/16}(H)(z)^p \le \max\left\{ |H(w) |^p: w\in \overline{D(z, \delta/16)}\right\}\\
	=\left|\sum_{k=1}^\infty \psi_k(w')  \bar{\partial} \bar h_k(w')\right|^p.
$$
For  $w'\in \overline{D(z, \frac{\delta}{16})}$, it holds that $G_{3\delta/4}(f)(w') \lesssim G_{\delta}(f)(z)$ and   $M_{p/2, \delta/16}(H)(w')\lesssim M_{p/2, \delta/8}( H)(z)$. Thus, using \eqref{e:PSH}, we obtain
\begin{align}\label{thissss}
	M_{2, \delta/16}(H)(z)^p &\lesssim \sum_{k=1}^\infty \psi_k(w') \left| \bar{\partial} \, \bar h_k(w')\right|^p \nonumber\\
	& \lesssim \sum_{k, \psi_k(w') \neq 0} \psi_k(w')\left(G_{3\delta/4}(f)(w')^p + M_{p/2, \delta/16}( H)(w')^p \right )\nonumber\\
	& \lesssim \sum_{k, \psi_k(w') \neq 0} \psi_k(w')\left(G_{\delta}(f)(z)^p + M_{p/2, \delta/8}( H)(z)^p \right )\nonumber\\
	&= C \left(G_{\delta}(f)(z)^p + M_{p/2, \delta/8}( H)(z)^p \right ).
\end{align}
This shows the required inequality \eqref{claimin}.

By weak-type convolution Young's inequality \cite[Theorem 1.2.13]{MR3243734}, we deduce that
\begin{align}\label{convolution}
\|M_{p/2, \delta/8}( H)\|_{L^{p,\infty}(\mathbb{C})}
=\left\||H|^{\frac{p}{2}} \ast g\right\|_{L^{2,\infty}(\mathbb{C})}^{2/p}
\lesssim \left\||H|^{\frac{p}{2}}\right\|_{L^{2,\infty}(\mathbb{C})}^{2/p}\cdot \left\|g\right\|_{L^1(\mathbb{C})}^{2/p}
\lesssim \|H\|_{L^{p,\infty}(\mathbb{C})},
\end{align}
where $g(z):=\frac{\chi_{D(0,\delta/8)}}{A(D(0,\delta/8))}\in L^1(\mathbb{C})$.
Combining inequalities \eqref{thissss}, \eqref{est-F}, \eqref{f-Sp-norm-b} and \eqref{convolution}, we conclude that
 \begin{align}\label{ppoo}
\|M_{2, \delta/16}(  \bar {\partial}\, \bar{f}_1 )\|_{L^{p,\infty}(\mathbb{C})}
	&\lesssim  \|M_{2, \delta/16} (H)\|_{L^{p,\infty}(\mathbb{C})} +  \|M_{2, \delta/16}( F)\|_{L^{p,\infty}(\mathbb{C})} \nonumber\\
	&\lesssim \|G_{\delta}(f)\|_{L^{p,\infty}(\mathbb{C})} + \|M_{p/2, \delta/8}( H)\|_{L^{p,\infty}(\mathbb{C})}  +  \|M_{2, \delta/16}(G_{\delta/2}(f))\|_{L^{p,\infty}(\mathbb{C})} \nonumber\\
	&\lesssim\|G_{\delta}(f)\|_{L^{p,\infty}(\mathbb{C})}.
 \end{align}
It follows from \cite[Formula (4.7)]{MR4402674} that for $\delta>0$ small enough,
\begin{align}\label{Gdelta}
G_\delta(f)(z)\lesssim \|H_f(k_z)\|_{L^2(\varphi)}.
\end{align}

Similar to the proof of \eqref{cbf1} in the case of $0<p\leq 2$, and to that of \eqref{hff12} in the case of $p>2$, we get that
\begin{align}\label{smito}
\left\|\|H_{\bar{f_1}}(k_z)\|_{L^2(\varphi)}\right\|_{L^{p,\infty}(\mathbb{C})}\lesssim \| M_{2,\delta/16}(\bar{\partial}\bar{f_1})\|_{{L^{p,\infty}(\mathbb{C})}}.
\end{align}
Combining inequalities \eqref{ppoo}, \eqref{Gdelta} and \eqref{smito}, we conclude that
\begin{align*}
\left\|\|H_{\bar{f_1}}(k_z)\|_{L^2(\varphi)}\right\|_{L^{p,\infty}(\mathbb{C})}\lesssim \left\|\|H_{f}(k_z)\|_{L^2(\varphi)}\right\|_{L^{p,\infty}(\mathbb{C})}.
\end{align*}
This, in combination with Theorem \ref{SpandSpinfty} and Proposition \ref{Berezin}, implies that if $H_f\in S^{p,\infty}$, then $H_{\bar{f_1}}\in S^{p,\infty}$.

Next, similar to the proof of \eqref{ssmmt} in the case of $0<p\leq 2$, and to that of \eqref{hff13} in the case of $p>2$, we get that
\begin{align*}
\left\|\|H_{\bar{f_2}}(k_z)\|_{L^2(\varphi)}\right\|_{L^{p,\infty}(\mathbb{C})}\lesssim \| M_{2,\delta/4}(\bar{f_2})\|_{{L^{p,\infty}(\mathbb{C})}}=\| M_{2,\delta/4}(f_2)\|_{{L^{p,\infty}(\mathbb{C})}}.
\end{align*}
By inequality \eqref{decomp-a}, the right-hand side is dominated by $C\|f\|_{IDA^{p,\infty}(\mathbb{C})}$. This, in combination with Theorem \ref{SpandSpinfty}, implies that if $H_f\in S^{p,\infty}$, then $H_{\bar{f_2}}\in S^{p,\infty}$. Since $H_{\bar{f_1}},H_{\bar{f_2}}\in S^{p,\infty}$, we get that $H_{\bar{f}}\in S^{p,\infty}$ and then finish the proof of Theorem \ref{Coburn}.
\end{proof}
	\bigskip

\noindent
 {\bf Acknowledgements:}
Z. Fan is supported by the China Postdoctoral Science Foundation (No. 2023M740799), by the Postdoctoral Fellowship Program of CPSF (No. GZB20230175), and by the Guangdong Basic and Applied Basic Research Foundation (No. 2023A1515110879). X. Wang is supported by  the National Natural Science Foundation of China (Grant No. 12471119 and No. 11971125).

\bigskip

\end{document}